\newtheorem{theorem}{Theorem}[section]
\newtheorem{proposition}[theorem]{Proposition}
\newtheorem{lemma}[theorem]{Lemma}
\newtheorem{cor}[theorem]{Corollary}
\newtheorem{remark}[theorem]{Remark}
\theoremstyle{definition}
\theoremstyle{remark}
\numberwithin{equation}{section}
\newcommand{\rH}{\mathrm{H}}
\newcommand{\rB}{\mathrm{B}}
\newcommand{\rC}{\mathrm{C}}
\newcommand{\rL}{\mathrm{L}}
\newcommand{\rW}{\mathrm{W}}
\newcommand{\E}{\mathbb{E}}
\newcommand{\mb}{\mathbb}
\newcommand{\R}{\mathbb{R}}
\newcommand{\N}{\mathbb{N}}
\newcommand{\D}{\operatorname{div}}
\renewcommand{\leq}{\leqslant}
\renewcommand{\geq}{\geqslant}
\subjclass[2010]{35K40, 35K61, 76D03, 76D05, 76S05}
\keywords{fluid structure, porous material, Beaver-Joseph and Beaver-Joseph-Saffman interface conditions,  Navier-Stokes-Darcy-system, critical spaces,
Lions-Magenes spaces, Serrin-type blow-up criteria}
\begin{document}

	\title[Fluid-Porous]{Fluid-Structure Interaction with Porous Media: The Beaver-Joseph condition in the strong sense}
	
	
	\author{Tim  Binz} 
	\address{T. Binz, Princeton University,
		Program in Applied \& Computational Mathematics, 
		Fine Hall, Washington Road, 08544 Princeton, NJ, USA.}
	\email {tb7523@princeton.edu}
	
	\author{Matthias Hieber} 
	\address{M. Hieber, Technische Universit\"{a}t Darmstadt,
		Schlo\ss{}gartenstra{\ss}e 7, 64289 Darmstadt, Germany.}
	\email {hieber@mathematik.tu-darmstadt.de}

	\author{Arnab Roy}
	\address{A. Roy, Technische Universit\"{a}t Darmstadt,
		Schlo\ss{}gartenstra{\ss}e 7, 64289 Darmstadt, Germany.}
	\email{aroy@bcamath.org}

\begin{abstract}
{\color{black}This article considers fluid structure interaction describing the motion of a fluid contained in a porous medium.} The fluid is modelled by Navier-Stokes equations and the coupling between fluid and the porous medium is described by the classical Beaver-Joseph or the Beaver-Joseph-Saffman interface condition. In contrast to previous work these conditions are investigated for the first time in the strong sense and it is shown that the coupled system admits a unique, global strong solution in critical spaces provided the data are small enough. Furthermore, a Serrin-type blow-up criterium is developed and higher regularity estimates at the interface are established, which say  that the solution is even analytic provided the forces are so.      
\end{abstract}

\maketitle

\section{Introduction}
\label{sec:intro}

The interaction between a fluid and a porous material is a classical problem which couples the Navier-Stokes equations for an incompressible fluid to the equations of 
saturated porous material. In the latter case, the filtration of a fluid through porous media is often described by Darcy's law. This law describes the simplest relation between the 
velocity and the pressure in porous media under the physical reasonable assumption that the associated fluid flows are small. Darcy's law for the motion of an incompressible fluid  through a 
saturated porous media is described by   
\begin{equation*}
	s\partial_t p + \operatorname{div} {u}_{p}=0,\quad {u}_{p}=- {K}\nabla p \quad \mbox{in}\quad (0,T)\times {\Omega}_p,
\end{equation*}
where $s\geq 0$ is a coefficient and  $K$ denotes the hydraulic conductivity tensor of the porous media. Here the underlying domain $\Omega$ is decomposed into two non-intersecting subdomains 
$\Omega_f$ and $\Omega_p$ separated by an interface $\Gamma$  where $\Gamma = \partial \Omega_f \cap \partial \Omega_p$.  Let $\Omega_f$ be the  fluid region with the flow governed by Navier--Stokes 
equations
\begin{equation*}\label{fluid:mom}
	\partial _{t}{u} + ({u} \cdot \nabla){u} -\mbox { div } {\sigma({u}, \pi)} =  0, \quad \operatorname{div}{u}=0\quad \mbox{in}\quad (0,T)\times {\Omega}_f,
\end{equation*}
where $u$ and $\pi$ denote the velocity and the pressure of the fluid in  ${\Omega}_f$, respectively. As usual, $\sigma(u,\pi)$ denotes the Cauchy stress tensor. 

Coupling conditions across the interface which separates fluid flow in $\Omega_f$ and the porous media in $\Omega_p$ are of central importance in this context. They have been investigated both 
from the physical and rigorous mathematical point of view, see e.g., the works of Beaver-Joseph \cite{BJ:67}, Saffman \cite{Saf:71} and J\"ager and Mikelic \cite{JM:00}. 
It is natural to consider two classical interface conditions on the fluid-porous media interface $\Gamma$, which read as
\begin{equation*}
	{u}\cdot {n}_f + {u}_p\cdot {n}_p =0 \quad   
	-\sigma(u, \pi){n}_f\cdot {n}_f = p  \\ 
\end{equation*}
where ${n}_f$ and  ${n}_p$ denote the outward unit normal vectors to ${\Omega}_f$, ${\Omega}_p$ respectively,
These two conditions are classical: they describe that the  exchange of fluid between the two domains is conservative and that the kinematic pressure in the porous media balances the 
normal component of the fluid flow stress in normal direction. 

As a third and central  interface equation we consider  the   {\em Beaver--Joseph condition} \cite{BJ:67} which says that  the tangential component of the stress of the 
fluid flow is proportional to   the jump in the tangential velocity over the interface. More precisely, Beaver and Joseph \cite{BJ:67} proposed that the normal derivative of the 
tangential component of $u$ should satisfy       
\begin{align*}
	-\sigma( {u},\pi){n}_f\cdot {\tau}_{j} = \alpha {K_j^{-1/2}}({u}-{u}_p)\cdot {\tau}_{j}, \ j=1,2, 
\end{align*}
where ${\tau}_{1},{\tau}_{2}$ represent a local orthonormal basis for the tangent plane to $\Gamma$, $K_j= (K{\tau}_j)\cdot {\tau}_{j}$ and $\alpha$ is a friction coefficient.
The  system  is complemented by boundary conditions for $u$ and $p$ described precisely at the end of Section 2.   

For a rigorous derivation of the Beaver-Joseph  condition based on stochastic homogenization, we refer to the work of J\"ager and Mikelic \cite{JM:00}.   

Saffman \cite{Saf:71} pointed out that the velocity $u_p$ is much smaller than the other terms in the Beaver-Joseph law and proposed the simplified law 
\begin{align*}
	-\sigma( {u},\pi){n}_f\cdot {\tau}_{j} = \alpha {K_j^{-1/2}}u \cdot {\tau}_{j}, \ j=1,2, 
\end{align*}
which is nowadays called the {\em Beaver-Joseph-Saffman} condition.

{\color{black}
The forces interacting between the two phases give, in principle,  rise to time dependent domains and to moving interfaces. However, the modeling of the underlying  Beaver-Joseph coupling conditions
in the situation of moving domains is far from being fully understood and we hence  restrict ourselves here to the case of a fixed interface. The investigation of the case of moving interfaces
is subject to future work.         
}

The above Navier-Stokes-Darcy problem subject to Beaver-Joseph or Beaver-Joseph-Saffman interface conditions has received a lot of attention during the last decades both from the analytical and numerical 
point of view. It has been investigated by many authors in the weak setting and under 
various simplifications such as linear Stokes-Darcy and stationary Stokes-Darcy systems. We refer here to the works of J\"ager and Mikelic \cite{JM:00}, Badia, Quaini and Quarteroni \cite{BQQ:09}, 
Cao, Gunzburger, Wang  \cite{CGHW:10}, Wang, Wu \cite{WW:21} and Cao, Wang \cite{CW:22}. For related results concerning the linear Stokes flow with poroelastic structures, we refer to the 
articles \cite{KCMa:23},\cite{KCMb:23}  by Kuan, Canic and Muha and \cite{BGSW16} by  Bociu, Guidoboni, Sacco and Webster and \cite{BCMW:21} by Bociu, Canic, Muha and Webster.  
For numerical studies we refer e.g., to the survey articles \cite{DQ:09}. 

All of the results cited above have in common that they treat and understand the Beaver-Joseph or Beaver-Joseph-Saffman condition in the weak sense. Hence, existing  well-posedness results concern 
weak solution and uniqueness questions are left open until today.  

It is the  aim of this article to establish {\em existence and uniqueness results} for  {\em strong solutions} to the above  Navier-Stokes-Darcy problem subject to Beaver-Joseph or Beavier-Joseph-Saffman
conditions. In particular, solutions given in Theorem \ref{thm:main bjs critical}  and Theorem \ref{cor:bj r=q=2} below satisfy the Beavers-Joseph-Saffman and Beaver-Joseph  boundary conditions in the 
sense of traces. 

Let us emphasize that  our main results on global strong well-posedness  to the Navier-Stokes-Darcy model subject to  small initial data and on the local existence and uniqueness for  
strong solutions are formulated  even in  {\em critical spaces}. Critical spaces for the Navier-Stokes equations have been considered by many authors during the last decades; here we make use of the
abstract theory of critical spaces developed in \cite{PSW18}.    
We show that these critical spaces coincide with Besov-spaces of the form $B^{3/q-1}_{q,r}$, as in the case of the classical 
Navier-Stokes equations. In the latter case, these critical spaces go back to the work of Cannone \cite{Can:97}, \cite{Can:04}. Our approach, however, is very different from the one 
given in \cite{Can:97} and \cite{Can:04} which were based on Littlewood-Paley decomposition and not allowing coupling conditions.

A main difficulty in proving global existence and uniqueness results in the situation of the Beavers-Joseph or the Beaver-Joseph-Saffman condition is to understand the boundary condition in the strong sense, 
i.e. in the  sense of traces. To this end, we rewrite first the Beaver-Joseph-Saffman condition as a condition for the domain of the operator $A$ coupling the two equations. Note that the domain of this 
$2 \times 2$-system is then not diagonal. We then employ  rather sophisticated boundary perturbation arguments to decouple the system and to show that $A$ generates an exponentially decaying, compact and 
analytic semigroup in the $L^q\times L^q_\sigma$-setting for $1<q<\infty$. 

In a second step, we show that the coupling operator $A$ is even $\mathcal{R}$-sectorial, hereby using perturbation methods  for $\mathcal{R}$-sectorial operators.  
This allows us then to use modern semilinear theory based on maximal regularity and to obtain existence and uniqueness of solutions even in  critical Besov spaces of  the form $B^{3/q-1}_{q,r}$, where
$q,r$ are satisfying 
the condition $2/3r + 1/q \leq 1$. Our result can be thus viewed as an analogue of results due to Kato \cite{Kato:84}, Giga \cite{Gig:86} and Cannone \cite{Can:97} in the situation of the 
classical Navier-Stokes equations.     

Starting from this situation  we obtain furthermore a  rather complete understanding of the  coupled Navier-Stokes-Darcy system subject to the Beaver-Joseph-Saffman interface condition. 
Indeed, we develop first a Serrin-type finite time {\em blow-up criterium} and applying Angenent's parameter trick \cite{Ang:90} we obtain higher regularity estimates at the interface saying that the 
solution is even {\em analytic} provided the forces are so. Here regularity is meant to be only in tangential direction  of $\Gamma$ and a technical condition of the form $1/r + 3/2q <1$ is required. 
Note that this condition rules out the Hilbert space setting. This is also one reason why we deal with the $L^r_t-L^q_x$-setting for $r,q \ne 2$.   

In the situation of the Beaver-Joseph interface  condition a generation result for an analytic semigroup in $L^{q}$-setting for $q \not= 2$, similarly to the one 
described above, remains an open problem.  Nevertheless, we show that the Navier-Stokes-Darcy system subject to the Beaver-Joseph interface conditions are strongly, globally well-posed within 
the $L^2$-framework provided the parameter $\beta$ is small enough. 

It is interesting to note that the modern semilinear theory involved yields that the  initial data belong to the scaling invariant 
{\em critical Lions-Magenes space}. It can be seen as an  analogue of the classical result of Fujita and Kato \cite{FK:64} for the classical Navier-Stokes equations. Note that due to the lack 
of  understanding of the Stokes-Beaver-Joseph semigroup in the $L^q$-setting for $q \ne 2$, iteration techniques as developed by  Kato \cite{Kato:84} or Giga \cite{Gig:86} cannot be used here.

\section{Description of the model: Beaver-Joseph and Beaver-Joseph-Saffman conditions}

We consider the following model for fluid-porous media interaction in a  domain $\Omega = \Omega_p {\cup} \Omega_f$, where $\Omega_p$ and $\Omega_f$ are the 
domains occupied by the porous media and the fluid, respectively. The two domains are separated by a common interface $\Gamma$. The region $\Omega_f$ is occupied by an incompressible Newtonian 
fluid, whereas the domain $\Omega_p$ is filled by a saturated porous structure. 
In the porous structure we distinguish between the skeleton, composed by solid material, void porous space, and the fluid phase that consists of the fluid filling the pores. The porous media 
is regarded as superposition of the skeleton and the fluid phase and is assumed to be an incompressible material. The flow in the porous region  $\Omega_p$ is governed by  
by the equations
\begin{equation*}
	s\partial_t p + \operatorname{div} {u}_{p}=f_p,\quad {u}_{p}=- {K}\nabla p \quad \mbox{in}\quad (0,T)\times {\Omega}_p,
\end{equation*}
which describe in the first equation the saturated flow model and in the second one, Darcy's law. Here  $s\geq 0$ denotes the storage coefficient and $K$ denotes the hydraulic conductivity tensor of 
the porous media. Combining these two equations we obtain 
\begin{equation}\label{poro:darcy}
	s \partial_t p +  \operatorname{div}(- K \nabla p) = f_p  \mbox{ in } \Omega_p. 
\end{equation}
We impose the  boundary conditions $p=0$ on $\Gamma_1$, where $\Gamma_1$ is defined as in Figure 1 below.  


The fluid contained in $\Omega_f$ is governed by the incompressible  Navier--Stokes equations
\begin{equation*}\label{fluid:mom}
	\partial_{t}{u} + ({u} \cdot \nabla){u} -\mbox { div }\sigma(u,\pi) =  0, \quad \operatorname{div}{u}=0 \quad \mbox{in}\quad (0,T)\times {\Omega}_f,
\end{equation*}
where $\sigma(u,\pi)$ denotes the Cauchy stress tensor given by $\sigma(u,\pi)= 2\mu D(u) - \pi \mathbb{I}$, and  $D(u)$ is the deformation  tensor. 

The natural transmission conditions at the interface of a fluid and an elastic solid are described by the continuity of velocities and stresses. We now describe coupling conditions 
between a fluid and a porous media in some detail for fixed, time-independent domains and the interface $\Gamma$.  Conservation of mass at the interface as well as continuity of pressure or vanishing 
tangential velocity are classical interface conditions.

Beaver and Joseph \cite{BJ:67} pointed out that a fluid in contact with a porous media flows faster along the interface than if it were in contact with a solid interface. Hence, there is a certain slip 
of the fluid at the interface with porous media. They proposed that the normal derivative of the tangential component of $u$ should satisfy      
\begin{align*}
	-\sigma( {u},\pi){n}_f\cdot {t}^{j}_{\Gamma} = \alpha {K_j^{-1/2}}({u}-{u}_p)\cdot {t}^{j}_{\Gamma}, \ j=1,2, 
\end{align*}
where ${n}_f$, ${n}_p$ are the outward unit normal vectors to ${\Omega}_f$, ${\Omega}_p$ respectively, ${t}^{1}_{\Gamma},{t}^{2}_{\Gamma}$ represents a local orthonormal basis for the tangent plane 
to $\Gamma$, $K_j= (K{\tau}_{f})\cdot {t}^{j}_{\Gamma}$ and $\alpha$ is a friction coefficient. This condition is called the {\em Beaver-Joseph condition}.
One further requires the conditions
\begin{align*}
	{u}\cdot {n}_f + {u}_p\cdot {n}_p &=0 \quad    \mbox{on}\quad (0,T)\times \Gamma,\label{inter1}\\
	-\sigma( u,\pi){n}_f\cdot {n}_f &= p \quad    \mbox{on}\quad (0,T)\times \Gamma,
\end{align*}
The conditions are classical: the exchange of fluid between the two domains is conservative and the kinematic pressure in the porous media balances the normal component of the fluid 
flow stress in normal direction.

Saffman pointed out that the velocity $u_p$ is much smaller than the other terms in the Beaver-Joseph law and proposed the simplified condition 
\begin{align*}
	-\sigma({u},\pi){n}_f\cdot {t}^{j}_{\Gamma} = \alpha {K_j^{-1/2}}{u}\cdot {t}^{j}_{\Gamma}, \ j=1,2, 
\end{align*}
which is nowadays called the {\em Beaver-Joseph-Saffman condition}. 

Describing the above models in the language of PDEs, we obtain the following systems. Let $\Omega_p$, $\Omega_f$ and $\Omega_i$ be bounded domains in $\mathbf{\R}^3$ with smooth boundaries 
$\partial\Omega_p$, $\partial\Omega_f$ and $\Gamma_1:=\partial\Omega_i$, respectively. Assume that $\Omega_i \ne \emptyset$ such that $\Omega_i \subset \Omega_p$ and such that 
$\partial\Omega_p = \Gamma \cup \Gamma_1$ and $\Gamma_1 \cap \Gamma = \emptyset$. Furthermore, we assume that $\partial\Omega_f = \Gamma \cup \Gamma_2$ and $\Gamma_2 \cap \Gamma = \emptyset$.     

\begin{figure}
  \centering
  \includegraphics[width=0.55\textwidth]{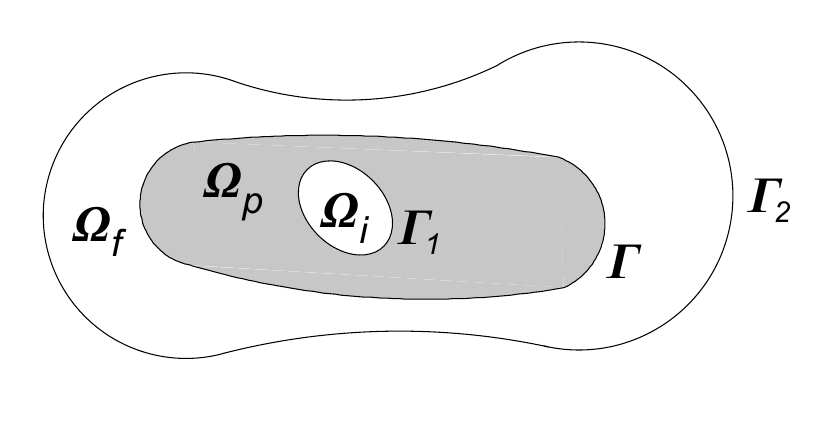}
  \caption{Fluid-porous media interaction}
  \label{fig1}
\end{figure}

Assuming $s=1$, $K=k\mathbb{I}$ in \eqref{poro:darcy}, the flow in the porous media region is governed by:
\begin{equation} \label{eq:ns-darcyp}
	\left\{
	\begin{aligned}
		\partial_t p - k \Delta p&=f_p &&\text{ in } (0,T)\times\Omega_p , \\ p&=0 &&\text{ on }(0,T)\times \Gamma_1,  \\ p(0)&=p_0 &&\text{ in } \Omega_p.
	\end{aligned}
	\right.
\end{equation}
The Navier-Stokes equations governing the flow in $\Omega_f$ are given by
\begin{equation} \label{eq:ns-darcyu}
	\left\{
	\begin{aligned}
		\partial_t u-\D\sigma&=-u \cdot \nabla u + {f}_f &&\text{ in } (0,T)\times\Omega_f,\\
		\operatorname{div}u&=0 &&\text{ in } (0,T)\times\Omega_f, \\ u &= 0 &&\text{ on } (0,T)\times\Gamma_2, 
	\end{aligned}
	\right.
\end{equation}
Here the Cauchy stress tensor for the fluid is given by
{\color{black}
\begin{equation*}
	\sigma = 2\mu D(u)- \pi \mathbb{I},
\end{equation*}
}
where the deformation tensor  is denoted by $D(u)=[D(u)_{ij}]_{1\leq i,j\leq 3}$ with $D(u)_{ij}=\frac{1}{2}(\partial_i u_j +\partial_j u_i)$. Moreover, $\mu>0$ denotes the viscosity of 
the fluid. 

The Beaver-Joseph-Saffman interface conditions read as 
\begin{equation} \label{eq:ns-darcyint}
	\left\{
	\begin{aligned}
		u\cdot n_{\Gamma}&=-k\nabla p\cdot n_{\Gamma} &&\text{ on }(0,T)\times\Gamma,\\
		n_{\Gamma}\cdot\sigma n_{\Gamma}&=-p &&\text{ on }(0,T)\times\Gamma,\\
		{t}^{j}_{\Gamma}\cdot\sigma n_{\Gamma}&=-\beta u\cdot {t}^{j}_{\Gamma},\ j=1,2,\ &&\text{ on }(0,T)\times\Gamma.
	\end{aligned}
	\right.
\end{equation}
Here $\beta = \alpha k^{-1/2}>0$ is a constant, $\{{t}^{1}_{\Gamma}, {t}^{2}_{\Gamma}\}$ represent the orthonormal basis for the tangent plane to $\Gamma$ and $n_{\Gamma}$ is the outward unit normal to $\Gamma$. 
The interface condition \eqref{eq:ns-darcyint}$_3$ is the Beavers-Joseph-Saffman condition which states that the tangential component of the normal stress of the fluid flow is 
proportional to the tangential velocity over the interface. We have seen that  condition \eqref{eq:ns-darcyint}$_3$ is a simplified variant of the original Beavers-Joseph interface condition, which reads as 
\begin{equation}\label{bdry:BJ}
	{t}^{j}_{\Gamma}\cdot\sigma n_{\Gamma}=-\beta(u- k\nabla p)\cdot {t}^{j}_{\Gamma},\ j=1,2,\ \text{ on }(0,T)\times\Gamma
\end{equation}
for some $\beta>0$. {\color{black} Note that the boundary conditions in \eqref{eq:ns-darcyint} and \eqref{bdry:BJ} are independent of a chosen local orthonormal basis $t^{j}_{\Gamma}$ for the
tangent plane to $\Gamma$, see \cite{BJ:67, Saf:71, JM:00} for details.}

\section{Preliminaries: semilinear evolution equations, function spaces, boundary perturbations, $\mathcal{R}$-sectorial operators}
\label{sec:prelim}

\textcolor{black}{In this section we collect various preliminariy results  needed in the sequel.}

\subsection{Semilinear evolution equations}
\

\textcolor{black}{We start by recalling  the theory of semilinear evolution equations based on the  maximal regularity approach in time-weighted spaces. For further information,  we refer to the
works of Pr\"{u}ss and Wilke \cite{PW:17} and Pr\"{u}ss, Simonett and Wilke \cite{PSW18}.}

We introduce first  time weighted spaces $\rL^r_\mu(J;E)$. More precisely, for a Banach space $(E,\|\cdot\|_E)$, $J = (0,T)$, $0 < T \le \infty$, $r \in (1,\infty)$, $\mu \in (\frac{1}{r},1]$ as well 
as $u \colon J \to E$, we denote by $t^{1-\mu} u$ the function $t \mapsto t^{1-\mu} u(t)$ on $J$. We then define
\begin{equation*}
	\rL_\mu^r(J;E) := \{u \colon J \to E : t^{1-\mu} u \in \rL^r(J;E)\}.
\end{equation*}
The latter space becomes a Banach space when equipped with the norm
\begin{equation*}
	\|u \|_{\rL_\mu^r(J;E)} := \big\| t^{1-\mu} u \big \|_{\rL^r(J;E)} = \Big(\int_J t^{r(1-\mu)} \| u(t) \|_E^r dt\Big)^{\frac{1}{r}}.
\end{equation*}
For $k \in \mathbb{N}_0$, the associated weighted Sobolev spaces are defined by
\begin{equation*}
	\rW_\mu^{k,r}(J;E) = \rH_\mu^{k,r}(J;E) := \left\{u \in \rW_\mathrm{loc}^{k,1}(J;E) : u^{(j)} \in \rL_\mu^r(J;E), \enspace j \in \{0,\dots,k\}\right\},
\end{equation*}
and these spaces are equipped with the norms
\begin{equation*}
	\| u \|_{\rW_\mu^{k,r}(J;E)} = \| u \|_{\rH_\mu^{k,r}(J;E)} := \Big(\sum_{j=0}^k \| u^{(j)} \|_{\rL_\mu^r(J;E)}^r \Big)^{\frac{1}{r}}.
\end{equation*}
The weighted fractional Sobolev spaces and Bessel potential spaces are then defined by interpolation.

Let $X_0, X_1$ be two Banach spaces such that $X_1 \hookrightarrow X_0$ is densely embedded. We denote by $X_\beta := [X_0,X_1]_{\beta}$ the complex interpolation spaces for $\beta \in (0,1)$ and by 
$X_{\theta,r} := (X_0,X_1)_{\theta,r}$ the real interpolation spaces for $\theta \in (0,1)$. Furthermore, the trace space is given by
\begin{equation*}
	X_{\gamma,\mu} := X_{\mu-\frac{1}{r},r} .
\end{equation*}
Moreover, the maximal $\rL^r$-regularity space is defined as
\begin{equation*}
	\E_{1,\mu}(J) := \rH^{1,r}_\mu(J;X_0) \cap \rL^r_\mu(J;X_1)  
\end{equation*}
and the data space as
\begin{equation*}
	\E_{0,\mu}(J) := \rL^r_{\mu}(J;X_0).
\end{equation*}
We obtain the following embedding
\begin{equation}
	\E_{1,\mu}(J) \hookrightarrow \mathrm{BUC}(J;X_{\gamma,\mu}) . \label{eq:embedding mr}
\end{equation}
Next, we consider a linear operator $A \colon X_1 \to X_0$, a  bilinear operator $F \colon X_\beta \times X_\beta \to X_0$ and the abstract semilinear problem 
\begin{equation}
	\begin{cases}
		u' + Au = F(u,u)+f, \text{ for } t \in (0,T), \\
		u(0) = u_0 .
	\end{cases}
	\label{eq:acp-abstract}
	\tag{SLP}
\end{equation}
The following assumptions (A) guarantee existence and uniqueness of solutions to \eqref{eq:acp-abstract}. \\
{\bf Assumptions (A)}: 
\begin{enumerate}
\item[(MR)] $A$ has maximal $\rL^r$-regularity on $X_0$ for $r \in (1,+\infty)$.
\item[(H1)] $F \colon X_\beta \times X_\beta \to X_0$ satisfies the estimate
\begin{equation*}
						\| F(u_1,u_1) - F(u_2,u_2) \|_{X_0}
						\leq C \cdot ( \| u_1 \|_{X_\beta} +\| u_2 \|_{X_\beta}  ) \cdot \| u_1 - u_2 \|_{X_\beta} ,\quad \forall\ u_1, u_2 \in X_\beta.
\end{equation*}
\item[(H2)] $2 \beta + \frac{1}{r} \leq 1+\mu$. 
\item[(S)] $X_0$ is of class UMD, and the embedding 
		\begin{equation*}
			\rH^{1,q}(\R;X_0) \cap \rL^q(\R;X_1) \hookrightarrow \rH^{1-\beta,q}(\R;X_\beta)
		\end{equation*}
		holds for each $\beta \in (0,1)$ and $q \in (1,\infty)$.
	\end{enumerate}
In (H2) equality holds for $\mu_c = 2 \beta - 1 + \frac{1}{r}$. This weight $\mu_c$ is called the \emph{critical weight}.

\begin{proposition}\label{thm:local}
Let $u_0 \in X_{\gamma,\mu}$, $f \in \E_{0,\mu}(0,T)$. Suppose Assumption (A) and let $\mu \in (\mu_c,1]$. \\
	a) Then there exists $T' = T'(u_0) \in (0,T)$ such that \eqref{eq:acp-abstract} admits a unique solution $u \in \E_{1,\mu}(0,T')$. Moreover, the solution depends continuously on the data. \\
	b) If the spectral bound of $A$ is negative, i.e. if $s(A) < 0$, then there exists $r_0 > 0$ such that for all $\| u_0 \|_{X_{\gamma,\mu}} < r_0$, the solution $u$ from \cref{thm:local}(a) exists 
	globally, i.e. $T' = + \infty$. 
\end{proposition}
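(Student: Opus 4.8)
The plan is to recast \eqref{eq:acp-abstract} as a fixed-point equation in the maximal regularity space and to apply the contraction mapping principle, following the strategy of Pr\"uss--Simonett--Wilke \cite{PSW18}. By the maximal regularity assumption (MR), the map $L \colon u \mapsto (u' + Au, u(0))$ is an isomorphism from $\E_{1,\mu}(0,a)$ onto $\E_{0,\mu}(0,a) \times X_{\gamma,\mu}$ for every $a \in (0,T]$. I would first let $u^* \in \E_{1,\mu}(0,T)$ be the unique solution of the linear problem $u^{*\prime} + Au^* = f$, $u^*(0) = u_0$, and then seek the solution in the form $u = u^* + w$ with $w$ of vanishing initial trace. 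Thus $w$ solves $w' + Aw = F(u^*+w,\, u^*+w)$, $w(0)=0$, and the fixed-point map $\Phi(w) := L_0^{-1} F(u^*+w,\, u^*+w)$ is well defined on the zero-trace space ${}_0\E_{1,\mu}(0,T')$, where $L_0^{-1}$ denotes the solution operator for vanishing initial data; crucially, by (MR) its norm is bounded independently of $T'$.

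The key step is the nonlinear estimate. Pointwise in time, (H1) gives $\|F(u,u)(t)\|_{X_0} \le C \|u(t)\|_{X_\beta}^2$, so I must control $\|u\|_{X_\beta}^2$ in $\E_{0,\mu} = \rL^r_\mu(J;X_0)$. Balancing the time weight, this reduces to the embedding $\E_{1,\mu}(0,T') \hookrightarrow \rL^{2r}_\nu(0,T';X_\beta)$ with $\nu = (1+\mu)/2$, since then $\|F(u,u)\|_{\rL^r_\mu} \le C\|u\|_{\rL^{2r}_\nu(X_\beta)}^2$. I would deduce this embedding from the weighted form on $J$ of the mixed-derivative property (S), namely $\E_{1,\mu} \hookrightarrow \rH^{1-\beta,r}_\mu(J;X_\beta)$, combined with a weighted Sobolev embedding in time, whose admissibility is exactly encoded in (H2), i.e. $2\beta + \tfrac{1}{r} \le 1 + \mu$. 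Because the weight is strictly subcritical, $\mu > \mu_c = 2\beta - 1 + \tfrac{1}{r}$, the embedding over a short interval and for zero-trace functions carries a gain $(T')^{\varepsilon}$ with $\varepsilon>0$; together with (H1) and the fact that the restriction of $u^*$ to $(0,T')$ has small $\rL^{2r}_\nu(X_\beta)$-norm, this yields
\begin{equation*}
	\| F(u_1,u_1) - F(u_2,u_2) \|_{\E_{0,\mu}(0,T')} \le C (T')^{\varepsilon} \bigl( \|u_1\|_{\E_{1,\mu}} + \|u_2\|_{\E_{1,\mu}} \bigr) \|u_1 - u_2\|_{\E_{1,\mu}} .
\end{equation*}
Choosing $T'$ small turns $\Phi$ into a self-map and a strict contraction on a suitable closed ball of ${}_0\E_{1,\mu}(0,T')$, so Banach's fixed-point theorem gives the unique local solution; continuous dependence follows from the Lipschitz dependence of $u^*$ and $L_0^{-1}$ on $(f,u_0)$ together with the uniform contraction constant, proving (a).

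For part (b), the hypothesis $s(A) < 0$ means that $-A$ generates an exponentially stable analytic semigroup, so $A$ enjoys maximal $\rL^r$-regularity on the half-line $J = (0,\infty)$ and the solution operator $L^{-1} \colon \E_{0,\mu}(0,\infty) \times X_{\gamma,\mu} \to \E_{1,\mu}(0,\infty)$ is bounded. I would then run the fixed point directly on $(0,\infty)$: on a ball $B_R \subset \E_{1,\mu}(0,\infty)$ the map $\Phi(u) = L^{-1}(F(u,u)+f,\, u_0)$ satisfies $\|\Phi(u)\|_{\E_{1,\mu}} \le C\bigl( \|u_0\|_{X_{\gamma,\mu}} + \|f\|_{\E_{0,\mu}(0,\infty)} + R^2 \bigr)$ together with the quadratic Lipschitz estimate for the difference. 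For $R$ and the data sufficiently small (in particular $\|u_0\|_{X_{\gamma,\mu}} < r_0$, with $f$ likewise small on the half-line), the term $CR^2$ is absorbed, so $\Phi$ maps $B_R$ into itself and contracts, producing a global solution that by uniqueness coincides with the local one, whence $T' = +\infty$.

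The main obstacle, as I see it, is precisely the nonlinear estimate: establishing $\E_{1,\mu} \hookrightarrow \rL^{2r}_\nu(J;X_\beta)$ with the correct time weight and extracting the positive power of $T'$ in the strictly subcritical regime. This is where the mixed-derivative property (S) and the exponent bookkeeping in (H2) are genuinely used, and where one must track the time weights carefully; by contrast, the reduction via maximal regularity, the contraction itself, and the passage to the half-line in (b) are relatively standard.
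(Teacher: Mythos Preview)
Your outline is correct and follows exactly the strategy the paper invokes: the paper does not give its own proof of this proposition but simply refers to Pr\"uss--Simonett--Wilke \cite{PSW18}, and your fixed-point argument via maximal regularity, together with the nonlinear estimate obtained from the mixed-derivative embedding (S) and the exponent condition (H2), is precisely the approach of that reference. One minor point: in part (b) as stated only $\|u_0\|_{X_{\gamma,\mu}} < r_0$ is assumed small, not $f$, so you should either note that the proposition tacitly requires small (or zero) forcing on the half-line, or absorb $f$ into the smallness hypothesis as is implicit in the paper's applications.
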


The embedding  \eqref{eq:embedding mr} yields the following blow-up criterion. Let $u$ be the solution from \cref{thm:local} and let $(0,t_+)$ be its maximal interval of existence. 
If $t_+ < T$, then 
\begin{equation*}
	\limsup_{t \uparrow t_+} \| u(t) \|_{X_{\gamma,\mu_c}} = + \infty. 
\end{equation*}
Furthermore, let $u$ be the solution from \cref{thm:local} and $(0,t_+)$ its maximal interval of existence. If $t_+ < T$, then
\begin{equation}
	\int_0^{t_+} \| u(t) \|_{X_{\mu_c}}^r \mathrm{d} t  = + \infty.
	\label{eq:serrin} 
\end{equation}
For a proof of the above results, we refer e.g. to \textcolor{black}{the work of Pr\"{u}ss, Wilke and Simonett \cite{PSW18}.} 

Finally, we note  the following version of Angenent's parameter trick \cite{Ang:90}. Note that in our situation the  non-linearity $F$ is bilinear and hence smooth. 

\begin{proposition}\label{thm:parameter trick}
Let $u$ be the solution from \cref{thm:local}.
Assume that the linear problem
	\begin{equation*}
			v' + Av + F(u,v) + F(v,u) = g,  v(0) = 0
	\end{equation*} 
admits for every $g \in \E_{0,\mu}(0,T')$, $0 < T' < T$ a unique solution $v \in \E_{1,\mu}(0,T')$.  If $g$ is of class $\rC^\infty$ or $\rC^\omega$, then $u \in \rC^\infty((0,T);X_1)$ or 
$u \in \rC^\omega((0,T);X_1)$.
\end{proposition}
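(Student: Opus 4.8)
The plan is to prove the statement by Angenent's parameter trick implemented through the time-scaling $u_\lambda(t) := u(\lambda t)$ for $\lambda$ in a neighborhood of $1$, combined with the analytic (resp. $\rC^\infty$) implicit function theorem. First I would fix a compact subinterval $[t_1,t_2] \subset (0,T')$ and choose $\delta > 0$ and $T'' < T'$ so that $\lambda t < T'$ for all $t \in (0,T'')$ and $|\lambda - 1| < \delta$, with $[t_1,t_2] \subset (0,T'')$. A direct computation shows that $u_\lambda$ solves the scaled problem $u_\lambda' + \lambda A u_\lambda - \lambda F(u_\lambda,u_\lambda) = \lambda f(\lambda\,\cdot)$ with $u_\lambda(0) = u_0$. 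This motivates defining the map $\Phi \colon (1-\delta,1+\delta) \times \E_{1,\mu}(0,T'') \to \E_{0,\mu}(0,T'') \times X_{\gamma,\mu}$ by
\begin{equation*}
	\Phi(\lambda,w) := \big(w' + \lambda A w - \lambda F(w,w) - \lambda f(\lambda\,\cdot),\ w(0) - u_0\big),
\end{equation*}
which satisfies $\Phi(\lambda, u_\lambda) = 0$, and in particular $\Phi(1,u) = 0$.

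Next I would verify the hypotheses of the implicit function theorem at the point $(1,u)$. For joint smoothness (resp. analyticity) of $\Phi$: the map $w \mapsto w'$ and the trace $w \mapsto w(0)$ are bounded linear; $(\lambda, w) \mapsto \lambda A w$ is bounded bilinear; and $(\lambda, w) \mapsto \lambda F(w,w)$ is a polynomial in $(\lambda,w)$, bounded by (H1) since $F$ is bilinear, hence real-analytic. The only genuinely nontrivial term is $\lambda \mapsto \lambda f(\lambda\,\cdot)$, whose regularity in $\lambda$ as a map into $\E_{0,\mu}(0,T'')$ is inherited from the regularity of the force $f$ in time — this is precisely where the hypothesis that $f$ is of class $\rC^\infty$ resp. $\rC^\omega$ enters (for the analytic case one needs a holomorphic extension of $f$ with uniform bounds so that $\lambda \mapsto f(\lambda\,\cdot)$ extends holomorphically). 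For the partial derivative in $w$ one computes $D_w\Phi(1,u)h = (h' + Ah - F(u,h) - F(h,u),\ h(0))$; up to the immaterial sign of the lower order terms this is exactly the linear operator appearing in the standing assumption of the proposition, so that, together with the maximal regularity (MR) of $A$, the map $D_w\Phi(1,u)$ is an isomorphism of $\E_{1,\mu}(0,T'')$ onto $\E_{0,\mu}(0,T'') \times X_{\gamma,\mu}$.

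The analytic (resp. $\rC^\infty$) implicit function theorem then yields a real-analytic (resp. smooth) curve $\lambda \mapsto w(\lambda) \in \E_{1,\mu}(0,T'')$ near $\lambda = 1$ with $\Phi(\lambda, w(\lambda)) = 0$ and $w(1) = u$; by the local uniqueness from \cref{thm:local} we identify $w(\lambda) = u_\lambda = u(\lambda\,\cdot)$. It then remains to convert regularity in $\lambda$ into regularity in $t$ with values in $X_1$. On $[t_1,t_2]$ the weight $t^{1-\mu}$ is bounded above and below, so $\E_{1,\mu}$ restricts to the unweighted space $\rH^{1,r}(t_1,t_2;X_0)\cap \rL^r(t_1,t_2;X_1)$; differentiating $w(\lambda)(t) = u(\lambda t)$ in $\lambda$ at $\lambda = 1$ produces, after an elementary induction, that $t^k u^{(k)}$ — hence $u^{(k)}$ on $[t_1,t_2]$, since $t \geq t_1 > 0$ — lies in $\rL^r(t_1,t_2;X_1)$ for every $k$. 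By the one-dimensional Sobolev embedding in time this gives $u \in \rC^\infty((t_1,t_2);X_1)$; in the analytic case the Cauchy estimates on the $\lambda$-derivatives of $w$ transfer to bounds of the form $\| u^{(k)} \|_{\rL^r(t_1,t_2;X_1)} \leq C M^k k!$, giving $u \in \rC^\omega((t_1,t_2);X_1)$. Since $[t_1,t_2] \subset (0,T')$ was arbitrary, the claim follows.

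The main obstacle I expect is twofold: establishing the joint analyticity of $\Phi$, and specifically that $\lambda \mapsto \lambda f(\lambda\,\cdot)$ is analytic (resp. smooth) into $\E_{0,\mu}$, which is what forces the precise regularity and boundedness hypotheses on the force $f$; and the final transfer step, where one must carefully combine the induction with the Sobolev embedding in time and, in the analytic case, propagate the factorial Cauchy bounds so as to obtain genuine $X_1$-valued analyticity rather than only trace-space regularity. The invertibility of the linearization, by contrast, is supplied directly by the assumption of the proposition together with (MR).
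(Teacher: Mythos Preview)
The paper does not prove this proposition; it is stated as a known version of Angenent's parameter trick, with a reference to \cite{Ang:90} (and, implicitly, to the framework of \cite{PSW18} and \cite[Section~9.4]{PS16}). Your sketch is exactly the standard argument: the time dilation $u_\lambda(t)=u(\lambda t)$, the nonlinear map $\Phi$, invertibility of $D_w\Phi(1,u)$ via the assumed solvability of the linear problem, the implicit function theorem, and the identity $\partial_\lambda^k w(\lambda)\big|_{\lambda=1}=t^k u^{(k)}$ to convert $\lambda$-regularity into $t$-regularity in $X_1$.

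Two remarks. First, the two obstacles you flag are the genuine ones, and your treatment is the right one: the analyticity of $\lambda\mapsto f(\lambda\,\cdot)$ into $\E_{0,\mu}$ is precisely where the regularity hypothesis on the force enters (in the standard references this is often implemented with a time cutoff, $t\mapsto t+\lambda\chi(t)$, rather than a pure dilation, which localizes the argument and avoids having to shrink $T''$; either variant works). Second, the sign issue you observe is real: the Fr\'echet derivative of $w\mapsto w'+Aw-F(w,w)$ at $u$ produces $v'+Av-F(u,v)-F(v,u)$, with a minus sign, while the proposition is stated with a plus sign. The paper's own application in \autoref{sec:regularity} (the linear problem \eqref{eq:linear problem}) uses the minus-sign version, so this is a harmless sign inconsistency in the statement; your remark that it is ``immaterial'' is correct in substance, since the continuity-method verification works identically for either sign.
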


\subsection{Function spaces}
\

Let $\Omega \subset \R^3$ be a domain with boundary $\partial \Omega$ and $\Gamma \subset \partial \Omega$. 
For $s > 0$ and $s \not \in \N$ we denote by $\rH^{s,q}(\Omega)$ the Bessel potential spaces, by $\rW^{s,q}(\Omega)$ the fractional Sobolev spaces and by $\rB_{q,r}^s(\Omega)$ the Besov spaces 
for $r \in [1,\infty]$. For the definition of these spaces we refer e.g. to \cite{Tri:78} or \cite{Ama:19}. The analogous spaces with respect to homogeneous Dirichlet boundary conditions at $\Gamma$ are 
defined as  

\begin{equation*}
	\rH_{\Gamma}^{s,q}(\Omega) 
	:=
	\begin{cases}
		\{ v \in \rH^{s,q}(\Omega) \colon v|_{\Gamma} = 0 \}, &\text{ if } s > \frac{1}{q}, \\ 
		\{ v \in \rH^{1/q,q}(\R^3) \colon \mathrm{supp}(v) \subset \Omega \cup \Gamma \}, &\text{ if } s = \frac{1}{q}, \\
		\qquad \rH^{s,q}(\Omega) &\text{ if } s < \frac{1}{q}.
	\end{cases} 
\end{equation*}
and
\begin{equation*}
	\rB_{q,r,\Gamma}^{s}(\Omega) 
	:=
	\begin{cases}
		\{ v \in \rB_{q,r}^{s}(\Omega) \colon v|_{\Gamma} = 0 \}, &\text{ if } s > \frac{1}{q}, \\
		\{ v \in \rB_{q,r}^{1/q}(\R^3) \colon \mathrm{supp}(v) \subset \Omega \cup \Gamma \}, &\text{ if } s = \frac{1}{q}, \\
		\qquad \rB_{q,r}^{s}(\Omega) &\text{ if } s < \frac{1}{q}. \\
	\end{cases} 
\end{equation*}
Note, that
\begin{equation*}
	\rB^{\frac{1}{q}}_{q,q,\Gamma}(\Omega) = \rW^{\frac{1}{q},q}_{00,\Gamma}(\Omega),
\end{equation*}
denotes the $\rL^q$-type {\em Lions-Magenes space} on $\Omega$ with respect to $\Gamma \subset \partial \Omega$ given by all $v \in \rW^{\frac{1}{q},q}(\Omega)$ such that 
\begin{equation*}
	\int_{\Omega} \rho(x)^{-1} | v(x) |^q \, \mathrm{d}x < \infty. 
\end{equation*}
Here $\rho$ denotes a smooth function comparable to the distance $\mathrm{dist}(\cdot,\Gamma)$ from $\Gamma$, equipped with the norm
\begin{equation*}
	\| v \|_{\rW_{00,\Gamma}^{\frac{1}{q},q}}
	= \biggl(\| v \|_{\rW^{\frac{1}{q},q}}^q  
	+ \int_{\Omega} \frac{|v(x)|^q}{d(x,\Gamma)^2} \, \mathrm{d}x \biggr)^{\frac{1}{q}} .
\end{equation*}
For more information on these spaces we refer e.g. to \cite{See:72} and \cite{MR4500197}.
In particular, for $q = 2$ we obtain the usual Lions-Magenes space  $\rH_{00,\Gamma}^{\frac{1}{2}}(\Omega)$ on $\Omega$ with respect to $\Gamma$, see \cite [Theorem 11.7, Page 66]{LM:72}. 

Finally, motivated by the critical spaces of the Navier-Stokes equations we consider the following critical Besov spaces with boundary conditions
\begin{equation*}
	\rB_{q,r,\Gamma}^{\frac{3}{q}-1}(\Omega) 
	:=
	\begin{cases}
		\{ v \in \rB_{q,r}^{\frac{3}{q}-1}(\Omega) \colon v|_{\Gamma} = 0 \}, &\text{ if } q > 2, \\
		\qquad \rH^{\frac{1}{2}}_{00,\Gamma}(\Omega), &\text{ if } q = 2, \\
		\qquad \rB_{q,r}^{\frac{3}{q}-1}(\Omega) &\text{ if } q < 2 . \\
	\end{cases} 
\end{equation*}

\subsection{Dirichlet operators and perturbation theory for boundary conditions}
\
\label{sec:boundary perturbation}

\textcolor{black}{In this section we make use of perturbation theory for boundary conditions which goes back in the abstract setting to the work of Greiner \cite{Gre:87}. To this end,    
let $X_0$ and $\partial X$ be Banach spaces and $\mathbb{A}_m \colon D(\mathbb{A}_m) \subset X_0 \to X_0$ be a linear 
operator. Moreover, let $Z$ be a Banach space satisfying  $D(\mathbb{A}_m) \subset Z \subset X_0$,
$\rL \colon D(\mathbb{A}_m) \to \partial X$ be a linear operator and $\Phi \in \mathcal{L}(Z,\partial X)$ be a bounded linear operator.
We call $X$ the \emph{state space}, $\partial X$ the \emph{boundary space}, $\mathbb{A}_m$ the \emph{maximal} operator, $L$ the \emph{boundary operator} and $\Phi$ the \emph{boundary perturbation operator}.}
The operator $\mathbb{A}_0 \colon X_1 \subset X_0 \to X_0$ {with homogeneous boundary conditions} is defined as 
\begin{equation*}
	\mathbb{A}_0 v = \mathbb{A}_m v, \enspace X_1 := D(\mathbb{A}_0) := \ker(\rL) .
\end{equation*}
Finally, we consider the operator $\mathbb{A} \colon D(\mathbb{A}) \subset X_0 \to X_0$ given by
\begin{equation*}
	\mathbb{A} v := \mathbb{A}_m v, \enspace D(\mathbb{A}) := \{ v \in D(\mathbb{A}_m) \colon \rL v = \Phi v \} . 
\end{equation*}

\medskip 

\textcolor{black}{Before we continue we briefly show how this abstract approach  can be applied to our setting. For further  details we refer to  \autoref{sec:linear}.
We choose  the state space $X_0 := \rL^q(\Omega_p) \times \rL^q_{\sigma}(\Omega_f)$ and the boundary space $\partial X = \rW^{1-\frac{1}{q},q}(\Gamma) \times \rW^{1-\frac{1}{q},q}(\Gamma)$.
Furthermore, we consider the maximal operator
\begin{equation*}
	\mathbb{A}_m \tbinom{p}{u} = 
	\tbinom{k \Delta p}{\mathcal{P} \Delta u}, \qquad 
	D(\mathbb{A}_m) = \rH^{2,q}_{\Gamma_1}(\Omega_p) \times (\rH^{2,q}_{\Gamma_2}(\Omega_f) \cap \rL^q_{\sigma}(\Omega_f))
\end{equation*} 
and choose $Z = \rH^{1,q}_{\Gamma_1}(\Omega_p) \times (\rH^{1,q}_{\Gamma_2}(\Omega_f) \cap \rL^q_{\sigma}(\Omega_f))$. Finally, the boundary operator $L$ and the boundary perturbation operator $\Phi$ are defined
by
\begin{equation*}
	\rL 
 \tbinom{p}{u}
	= \tbinom{\partial_{n_{\Gamma}} p}{\sigma n_{\Gamma}} 
	\quad \text{ and } \quad
	\Phi 
	\tbinom{p}{u}	= \tbinom{- \frac{1}{k} u n_{\Gamma}}{-p n_{\Gamma} - \beta\sum_{\ell=1}^{2}(u\cdot t^{\ell}_{\Gamma})t^{\ell}_{\Gamma}}.
\end{equation*}
respectively. Let us point out that $\rL$ represents the leading order terms of the boundary conditions \eqref{bdry:BJ}, whereas $\Phi$ collects the lower order terms. The operator $\mathbb{A}_0$
with homogeneous boundary conditions is given by
\begin{equation*}
	\mathbb{A}_0 \tbinom{p}{u} := 
	\tbinom{k \Delta p}{\mathcal{P} \Delta u}, \qquad 
	D(\mathbb{A}_0) := \left\{ (p,u)^\top \in \rH^{2,q}_{\Gamma_1}(\Omega_p) \times (\rH^{2,q}_{\Gamma_2}(\Omega_f) \cap \rL^q_{\sigma}(\Omega_f)) \colon 
	\partial_{n_{\Gamma}} p = 0, \,\sigma n_{\Gamma} =0 
	\right\}
\end{equation*}
and finally, the operator $\mathbb{A}$ is given by
\begin{equation*}
	\mathbb{A} \binom{p}{u} := 
	\binom{k \Delta p}{\mathcal{P} \Delta u}, \qquad 
	D(\mathbb{A}) := \left\{ 
	\begin{aligned} 
	(p,u)^\top &\in \rH^{2,q}_{\Gamma_1}(\Omega_p) \times (\rH^{2,q}_{\Gamma_2}(\Omega_f) \cap \rL^q_{\sigma}(\Omega_f)) \\  
	&k\partial_{n_{\Gamma}} p = - un_{\Gamma}, \ \sigma n_{\Gamma} =-pn_{\Gamma} - \beta \sum_{\ell = 1}^2 (u \cdot t_{\Gamma}^{\ell}) t_{\Gamma}^{\ell}
	\end{aligned} 
	\right\} .
\end{equation*}
}

In the sequel, we assume that $\mathbb{A}_0$ generates a $C_0$-semigroup $(T_0(t))_{t \geq 0}$ on $X_0$.
Following Amann~\cite{Ama:95}, we introduce  an extrapolation scale and define the extrapolation space $X_{-1}$ as the completion of $X_0$ with respect to the norm induced by the 
resolvent, i.e. $X_{-1} := (X_0, \| R(\lambda,\mathbb{A}_0) \cdot \|_{X_0})^\sim$ for $\lambda \in \rho(\mathbb{A}_0)$.
Furthermore,  we define for $\beta \in (-1,1)$ the complex interpolation spaces $X_\beta$ as {\color{black}$X_\beta := [X_{-1},X_1]_{\frac{\beta+1}{2}}$.} The reiteration theorem guarantees that they  coincide with the 
definition given above for $\beta \in (0,1)$.

Furthermore, we denote by $F_\beta$ the Favard class associated with $\mathbb{A}_0$, see \cite[Section III.~5.b.]{EN00}. The Favard class can be characterized in terms of real interpolation spaces by 
$F_\beta = (X_0,X_1)_{\beta,\infty}$ for $\beta \in (0,1)$.
Moreover, we define the extrapolated semigroup $(T_{-1}(t))_{t \geq 0}$ on $X_{-1}$ via continuous extension from $(T_0(t))_{t \geq 0}$ on $X_0$. Its generator is the extrapolated operator $\mathbb{A}_{-1} \colon X_0 \subset X_{-1} \to X_{-1}$. Finally, we define the semigroups $(T_{\beta}(t))_{t \geq 0}$ on $X_{\beta}$ via restriction from the semigroup $(T_{-1}(t))_{t \geq 0}$ for $\beta \in (-1,1)$. Their generators are \textcolor{black}{the $X_\beta$-realizations of $\mathbb{A}_{-1}$, i.e. $\mathbb{A}_\beta u = \mathbb{A}_{-1}u$ with $D(\mathbb{A}_\beta) = \{ u \in X_0 \colon \mathbb{A}_{-1}u \in X_\beta \}$.}

\textcolor{black}{Following Greiner~\cite{Gre:87}} we define the \emph{Dirichlet operator} $\rL_\lambda := (\rL|_{\ker(\lambda-A_m)})^{-1}$, i.e.,
\begin{equation*}
	u = \rL_\lambda \varphi \enspace \Longleftrightarrow \enspace 
	\begin{cases}
		\lambda u = \mathbb{A}_m u, \\
		L u = \varphi .
	\end{cases}
\end{equation*}
he operator $\mathbb{A}$ can then be represented as
\textcolor{black}{
\begin{align*}
	 \mathbb{A} u &=	
	 (\mathbb{A}_{-1} + (\lambda-\mathbb{A}_{-1}) L_\lambda \Phi) u
	 =
	 (\mathbb{A}_{\delta-1} + (\lambda-\mathbb{A}_{-1}) L_\lambda \Phi) u ,\\
	D(\mathbb{A}) &= \{ u \in X_0 \colon   (\mathbb{A}_{-1} + (\lambda-\mathbb{A}_{-1}) L_\lambda \Phi)u \in X_0 \} 
	= \{ u \in X_{\delta} \colon   (\mathbb{A}_{\delta-1} + (\lambda-\mathbb{A}_{-1}) L_\lambda \Phi) u \in X_0 \} ,
\end{align*}
}
where $\delta \in [0,1)$ such that $D(\mathbb{A}_m) \subset X_\delta$, \textcolor{black}{see \cite[Lemma 3.4]{ABE:16} or \cref{lem:representation A}. For later use we recall the following result from the work of Adler, Bombieri and Engel {cf.~\cite[Corollary 3.7]{ABE:16}}.}

\begin{proposition}\label{prop:sw}
	Let $\mathbb{A}_0$ be generator of an analytic semigroup on $X_0$ of  angle $\theta \in (0,\nicefrac{\pi}{2}]$. Assume that there exists $0 \leq \gamma < \beta \leq 1$ such that 
	$D(\mathbb{A}_m) \subset F_{\beta}$ and 
	$[D(A_0-\omega)^\gamma]\hookrightarrow Z$. 
	Then $\mathbb{A}$ generates an analytic semigroup with angle $\theta \in (0,\nicefrac{\pi}{2}]$ on $X_0$.
\end{proposition}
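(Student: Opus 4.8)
The plan is to prove the proposition through Greiner's resolvent representation, reducing the generation statement for $\mathbb{A}$ to two points: the bounded invertibility of $I - \Phi L_\lambda$ on $\partial X$ for large $\lambda$, and a sectorial estimate $\|R(\lambda,\mathbb{A})\|_{\mathcal{L}(X_0)} \leq C/|\lambda|$ on the sector on which $\mathbb{A}_0$ is sectorial. Throughout, $\lambda$ ranges over $\Sigma := \Sigma_{\pi/2+\theta}$, the sector on which $\mathbb{A}_0$ satisfies $\|R(\lambda,\mathbb{A}_0)\|_{\mathcal{L}(X_0)} \leq M/|\lambda|$; since sectoriality of the same opening angle $\theta$ characterises generators of analytic semigroups of angle $\theta$, it suffices to transport this bound to $\mathbb{A}$. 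To set up the \emph{resolvent identity}, solving $(\lambda-\mathbb{A})u=f$ means finding $u \in D(\mathbb{A}_m)$ with $\lambda u - \mathbb{A}_m u = f$ and $\rL u = \Phi u$. Writing $u = R(\lambda,\mathbb{A}_0)f + L_\lambda \varphi$ with $\varphi := \Phi u \in \partial X$, the first summand carries the inhomogeneity with homogeneous boundary value while $L_\lambda\varphi \in \ker(\lambda-\mathbb{A}_m)$ realises the boundary datum. Applying $\Phi$, and using $D(\mathbb{A}_m)\subset Z$ with $\Phi \in \mathcal{L}(Z,\partial X)$, yields the closed equation $(I - \Phi L_\lambda)\varphi = \Phi R(\lambda,\mathbb{A}_0)f$ on $\partial X$. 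Once $I - \Phi L_\lambda$ is invertible, this gives
\[
 R(\lambda,\mathbb{A}) = R(\lambda,\mathbb{A}_0) + L_\lambda (I - \Phi L_\lambda)^{-1} \Phi R(\lambda,\mathbb{A}_0),
\]
which underlies the representation of $\mathbb{A}$ recorded before the statement.

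The heart of the argument is a pair of $\lambda$-dependent estimates. From the identity $L_\lambda = (\omega - \mathbb{A}_0)R(\lambda,\mathbb{A}_0)L_\omega$ for fixed $\omega \in \rho(\mathbb{A}_0)$, together with $L_\omega\colon \partial X \to D(\mathbb{A}_m) \subset F_\beta = (X_0,X_1)_{\beta,\infty}$ and the embedding $F_\beta \hookrightarrow [D((\omega-\mathbb{A}_0)^{\gamma})]$ valid for $\gamma<\beta$, the standard analytic-semigroup bound $\|(\omega-\mathbb{A}_0)^a R(\lambda,\mathbb{A}_0)\|_{\mathcal{L}(X_0)} \leq C|\lambda|^{a-1}$ for $a\in(0,1)$ yields
\[
 \|L_\lambda\|_{\mathcal{L}(\partial X, [D((\omega-\mathbb{A}_0)^{\gamma})])} \leq C|\lambda|^{\gamma-\beta}.
\]
Since $[D((\omega-\mathbb{A}_0)^{\gamma})] \hookrightarrow Z$ by hypothesis and $\Phi \in \mathcal{L}(Z,\partial X)$, this gives $\|\Phi L_\lambda\|_{\mathcal{L}(\partial X)} \leq C|\lambda|^{\gamma-\beta}\to 0$ as $|\lambda|\to\infty$, using $\gamma<\beta$. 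Hence there is $R_0>0$ such that $\|\Phi L_\lambda\|\leq 1/2$, so that $I - \Phi L_\lambda$ is invertible with $\|(I-\Phi L_\lambda)^{-1}\|\leq 2$ for $|\lambda|\geq R_0$ in $\Sigma$.

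It then remains to estimate the correction term. Taking $\gamma=0$ in the Dirichlet bound gives $\|L_\lambda\|_{\mathcal{L}(\partial X,X_0)} \leq C|\lambda|^{-\beta}$, while the smoothing estimate $\|R(\lambda,\mathbb{A}_0)\|_{\mathcal{L}(X_0,[D((\omega-\mathbb{A}_0)^{\gamma})])} \leq C|\lambda|^{\gamma-1}$ combined with $\Phi \in \mathcal{L}(Z,\partial X)$ yields $\|\Phi R(\lambda,\mathbb{A}_0)\|_{\mathcal{L}(X_0,\partial X)} \leq C|\lambda|^{\gamma-1}$. Inserting these into the resolvent formula gives
\[
 \|R(\lambda,\mathbb{A})\|_{\mathcal{L}(X_0)} \leq \frac{M}{|\lambda|} + 2C^2\, |\lambda|^{\gamma-\beta-1} \leq \frac{C'}{|\lambda|},
\]
since $\gamma-\beta-1 < -1$. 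As these bounds hold on the entire sector $\Sigma_{\pi/2+\theta}$ on which $\mathbb{A}_0$ is sectorial, $\mathbb{A}$ is sectorial of the same angle; after a harmless shift guaranteeing that the shifted sector lies in $\rho(\mathbb{A})$, it follows that $\mathbb{A}$ generates an analytic semigroup of angle $\theta$ on $X_0$.

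The main obstacle is the Dirichlet-operator estimate. Obtaining the correct power $|\lambda|^{\gamma-\beta}$ requires carefully transporting the Favard-class information $D(\mathbb{A}_m)\subset F_\beta$ through the extrapolation scale and invoking the embedding of the real interpolation space $F_\beta$ into the fractional power domain $[D((\omega-\mathbb{A}_0)^{\gamma})]$ for $\gamma<\beta$. It is precisely this strict inequality $\gamma<\beta$ that produces genuine decay in $\lambda$, and hence simultaneously the invertibility of $I-\Phi L_\lambda$ and the decisive extra power of $|\lambda|$ making the correction term summable against $M/|\lambda|$; the computation itself is then routine.
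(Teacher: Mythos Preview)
The paper does not supply its own proof of this proposition; it is quoted from \cite[Corollary~3.7]{ABE:16}. Your argument is precisely the Greiner-type resolvent approach underlying that reference, and the overall strategy---the representation $R(\lambda,\mathbb{A}) = R(\lambda,\mathbb{A}_0) + L_\lambda (I-\Phi L_\lambda)^{-1}\Phi R(\lambda,\mathbb{A}_0)$, the Neumann-series invertibility of $I-\Phi L_\lambda$ for large $|\lambda|$, and the resulting sectorial estimate on the full sector $\Sigma_{\pi/2+\theta}$---is correct.

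One technical step deserves more care. The chain you invoke (``$L_\omega \colon \partial X \to F_\beta$, then $F_\beta \hookrightarrow [D((\omega-\mathbb{A}_0)^{\gamma})]$, then apply $\|(\omega-\mathbb{A}_0)^a R(\lambda,\mathbb{A}_0)\|_{\mathcal{L}(X_0)} \leq C|\lambda|^{a-1}$'') does not by itself yield the exponent $\gamma-\beta$: the embedding $F_\beta \hookrightarrow D((\omega-\mathbb{A}_0)^{\gamma'})$ holds only for $\gamma' < \beta$, and composing with the fractional resolvent bound then gives decay of order $|\lambda|^{\gamma-\gamma'}$ for some $\gamma' \in (\gamma,\beta)$, not $|\lambda|^{\gamma-\beta}$. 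The sharp exponent follows instead from the Favard-class resolvent characterisation $\|\mathbb{A}_0 R(\lambda,\mathbb{A}_0) y\|_{X_0} \leq C|\lambda|^{-\beta}\|y\|_{F_\beta}$ (an equivalent description of $(X_0,X_1)_{\beta,\infty}$), combined with interpolation on the target space. Either way the exponent is strictly negative, so the invertibility of $I-\Phi L_\lambda$ and the final $C/|\lambda|$ bound go through unchanged; only the bookkeeping of powers needs the adjustment.
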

Perturbations of his type are called Staffans-Wei{\ss} perturbations, see \cite{ABE:16, Wei:94, Sta:05}. 
Choosing $Z = X_\alpha$ we consider finally boundary perturbation of $\mathcal{R}$-sectorial operators.

\subsection{$\mathcal{R}$-sectorial operators}
\

The characterization of maximal $L^p$-regularity in terms of $\mathcal{R}$-boundedness of the associated resolvent allows to prove perturbation results for maximal $L^p$-regularity in UMD-spaces. 
For more information on $\mathcal{R}$-sectorial we refer to \cite{DHP:03}, \cite{KW:04} and \cite{PS16}.    
We are in particular interested in perturbations that are bounded in a fractional scale. 
More precisely, one can allow perturbations which are large in norm provided they are bounded from $X_\alpha$ to $X_{\beta-1}$ for some $0\leq \alpha < \beta \leq 1$. The case 
$\beta=1$ and $\alpha<1$ extends a well known perturbation theorem for generators of analytic semigroups to $\mathcal{R}$-sectorial operators and hence to maximal $L^p$-regularity in UMD-spaces.

\begin{proposition}\label{thm:boundary perturbation mr}
Let $\mathbb{A}_0$ be a $\mathcal{R}$-sectorial operator on $X_0$ of  angle $\theta_r(\mathbb{A}_0) < \theta < \pi$. Assume that there exists $0 \leq \gamma < \beta \leq 1$ such that 
$D(\mathbb{A}_m) \subset X_\beta$ and $X_\gamma \hookrightarrow Z$. Then $\mathbb{A}+\omega$ is $\mathcal{R}$-sectorial on $X_0$ of angle $\tilde{\theta} < \theta$ provided  $\omega > s(\mathbb{A})$. 
\end{proposition}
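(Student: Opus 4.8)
The plan is to combine the Staffans-Wei\ss{} perturbation result for analytic semigroups, \cref{prop:sw}, with the characterization of maximal $L^p$-regularity via $\mathcal{R}$-sectoriality, adapting the former's boundary-perturbation bookkeeping to the $\mathcal{R}$-bounded setting. The key algebraic identity is the representation of $\mathbb{A}$ established just before the statement, namely
\begin{equation*}
	\mathbb{A} = \mathbb{A}_{\delta-1} + (\lambda - \mathbb{A}_{-1}) L_\lambda \Phi ,
\end{equation*}
so that $\mathbb{A}$ differs from $\mathbb{A}_0$ by a boundary perturbation $P_\lambda := (\lambda - \mathbb{A}_{-1}) L_\lambda \Phi$ that factors through the boundary space. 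First I would analyze the mapping properties of $P_\lambda$ on the extrapolation scale: $\Phi \in \mathcal{L}(Z,\partial X)$ together with $X_\gamma \hookrightarrow Z$ shows $\Phi$ maps $X_\gamma$ boundedly into $\partial X$; the Dirichlet operator $L_\lambda$ maps $\partial X$ into $\ker(\lambda - \mathbb{A}_m) \subset D(\mathbb{A}_m) \subset X_\beta$; and then $(\lambda - \mathbb{A}_{-1})$ shifts the scale down by one, landing in $X_{\beta-1}$. Thus $P_\lambda$ is bounded from $X_\gamma$ into $X_{\beta-1}$ with $0 \leq \gamma < \beta \leq 1$, which is exactly the fractional-scale smallness-in-relative-order structure that the $\mathcal{R}$-sectorial perturbation theory is designed to absorb.

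Next I would invoke the perturbation theorem for $\mathcal{R}$-sectorial operators under relatively bounded perturbations in a fractional scale (as in \cite{DHP:03}, \cite{KW:04}, \cite{PS16}): if $\mathbb{A}_0$ is $\mathcal{R}$-sectorial of angle $\theta_r(\mathbb{A}_0) < \theta < \pi$ and $P$ maps $X_\gamma \to X_{\beta-1}$ with $\gamma < \beta$, then for $\lambda$ in a suitable sector of sufficiently large modulus the resolvent identity
\begin{equation*}
	\lambda - \mathbb{A} = (\lambda - \mathbb{A}_0)\bigl( I - R(\lambda,\mathbb{A}_0) P_\lambda \bigr)
\end{equation*}
can be inverted by a Neumann series, because the factor $R(\lambda,\mathbb{A}_0) P_\lambda$ has small $\mathcal{R}$-bound on the relevant sector. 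The crucial quantitative estimate is the interpolation-type bound $\|R(\lambda,\mathbb{A}_0)\|_{\mathcal{L}(X_{\beta-1},X_\gamma)} \lesssim |\lambda|^{\gamma-\beta}$, valid $\mathcal{R}$-boundedly on a sector by the moment inequality and the analyticity/$\mathcal{R}$-sectoriality of $\mathbb{A}_0$; since $\gamma - \beta < 0$, this tends to zero as $|\lambda| \to \infty$, so the perturbed family $\{\lambda R(\lambda,\mathbb{A}+\omega) : \lambda \in \Sigma_{\pi-\tilde\theta}\}$ remains $\mathcal{R}$-bounded after shifting by $\omega > s(\mathbb{A})$ to push the spectrum into the left half-plane. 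The shift by $\omega$ is what converts the large-$|\lambda|$ estimate into $\mathcal{R}$-sectoriality on a full sector rooted at the origin, and the condition $\omega > s(\mathbb{A})$ guarantees invertibility near $\lambda = 0$.

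The main obstacle I anticipate is establishing that the perturbation estimate is $\mathcal{R}$-bounded rather than merely norm-bounded: one cannot simply quote the classical analytic-semigroup perturbation argument, because maximal $L^p$-regularity requires control of the \emph{$\mathcal{R}$-bound} of the resolvent family, and $\mathcal{R}$-boundedness is not preserved under arbitrary relatively bounded perturbations. The resolution is that the factorization $P_\lambda = (\lambda - \mathbb{A}_{-1}) L_\lambda \Phi$ writes the perturbation as a composition in which only the single-operator factors $(\lambda - \mathbb{A}_{-1})$ and $R(\lambda,\mathbb{A}_0)$ depend on $\lambda$; one shows the family $\{|\lambda|^{\beta-\gamma} R(\lambda,\mathbb{A}_0) P_\lambda : \lambda \in \Sigma\}$ is $\mathcal{R}$-bounded by combining the $\mathcal{R}$-sectoriality of $\mathbb{A}_0$ on the extrapolated scales with Kahane's contraction principle and the uniform boundedness of $L_\lambda \Phi$ into $X_\beta$ after rescaling. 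Granting that the relevant family has $\mathcal{R}$-bound strictly less than one on a tail $\{|\lambda| \geq \lambda_0\}$, the Neumann series converges in the $\mathcal{R}$-bounded sense, and together with the spectral shift by $\omega$ one concludes that $\mathbb{A} + \omega$ is $\mathcal{R}$-sectorial of some angle $\tilde\theta < \theta$, as claimed. This is the precise $\mathcal{R}$-sectorial analogue of \cref{prop:sw}, and since $\mathcal{R}$-sectoriality of angle less than $\pi/2$ on a UMD space is equivalent to maximal $L^p$-regularity, it yields property (MR) for the coupling operator $\mathbb{A}$.
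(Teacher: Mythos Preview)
The paper does not actually give a proof of this proposition: immediately after the statement it simply writes ``For a proof of Proposition~\ref{thm:boundary perturbation mr} we refer e.g.\ to \cite[Corollary~12]{KW:01}.'' So there is no argument in the paper to compare against; the result is taken as a black box from the Kunstmann--Weis perturbation theory for $\mathcal{R}$-sectorial operators.

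Your sketch is essentially the argument that lies behind that reference: reduce the boundary perturbation to a lower-order perturbation $P \colon X_\gamma \to X_{\beta-1}$ via the representation $\mathbb{A} = \mathbb{A}_{-1} + (\lambda - \mathbb{A}_{-1})L_\lambda\Phi$, use the $\mathcal{R}$-bounded moment inequality to get $|\lambda|^{\gamma-\beta}$-decay of the perturbed resolvent factor, and close by a Neumann series plus a spectral shift. Two small caveats worth tightening: (i) the resolvent identity you wrote needs to be read on the extrapolated scale, since $\mathbb{A}$ and $\mathbb{A}_0$ have different domains and $P_\lambda$ lands in $X_{\beta-1}$ rather than $X_0$; and (ii) the Dirichlet operator $L_\lambda$ itself depends on $\lambda$, so ``uniform boundedness of $L_\lambda\Phi$ into $X_\beta$'' requires the standard estimate $\|L_\lambda\|_{\mathcal{L}(\partial X, X_\beta)} \lesssim 1$ (or the rescaled version) that comes from the Greiner/Staffans--Wei\ss{} framework. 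With those in place your outline is correct and matches the content of the cited result.
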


For a proof of Proposition \ref{thm:boundary perturbation mr} we refer e.g. to \cite[Corollary 12]{KW:01}. 

Note that $X_{\gamma'} \hookrightarrow D(\mathbb{A}_0-\omega)^\gamma \hookrightarrow X_\gamma$ for $\gamma' < \gamma$. If $\mathbb{A}_0 \in \mathcal{H}^\infty(X_0)$, then $X_\gamma = D(\mathbb{A}_0-\omega)^\gamma$.
Note also that $X_\beta \hookrightarrow F_\beta$. Hence, the conditions in \cref{prop:sw} are less restrictive than the conditions in \cref{thm:boundary perturbation mr}.

\section{Main results for Beaver-Joseph-Saffman interface conditions}
\label{sec:BJS}

In this section we discuss the Navier-Stokes-Darcy model with the Beaver-Joseph-Saffman interface condition. {\color{black}Our main result shows strong well-posedness for initial data in the critical Besov spaces.} 
{\color{black}
\begin{theorem}[Strong well-posedness in critical spaces]\label{thm:main bjs critical} \mbox{} \\
Let $r \in (1,\infty)$, $q \in (1,3)$ such that $\frac{2}{3 r} + \frac{1}{q} \leq 1$ and let $(p_0,u_0) \in 
\rB_{q,r,\Gamma_1}^{\frac{3}{q}-1}(\Omega_p)\times \big(\rB_{q,r,\Gamma_2}^{\frac{3}{q}-1}(\Omega_f) \cap \rL^q_{\sigma}(\Omega_f)\big)$, $(f_p,f_f)\in \rL^r_{\mu}(0,T;L^q(\Omega_p))\times \rL^r_{\mu}(0,T;L^q_{\sigma}(\Omega_f))$. Set $\mu = \frac{3}{2q}-\frac{1}{2}+ \frac{1}{r}$.
	\begin{enumerate}[(i)]
		\item 
		Then there exists $T>0$ such that the system \eqref{eq:ns-darcyp}--\eqref{eq:ns-darcyint} admits a unique, strong solution 
		\begin{align*}
			&p \in \rL^r_{\mu}(0,T; \rH^{2,q}(\Omega_p)) \cap \rH^{1,r}_{\mu}(0,T; \rL^q(\Omega_p))
			\cap \rC([0,T), \rB^{\frac{3}{q}-1}_{q,r}(\Omega_p)),\\
			&u \in 
			\rL^r_{\mu}(0,T; \rH^{2,q}(\Omega_f)) \cap \rH^{1,r}_{\mu}(0,T; \rL^q_{\sigma}(\Omega_f))
			\cap \rC([0,T), \rB^{\frac{3}{q}-1}_{q,r}(\Omega_p)). 
		\end{align*}
		\item  
		There exists a small constant $r_0 > 0$ such that for initial data $(p_0,u_0)$ 
		with \\ 
$\| p_0 \|_{\rB_{q,r}^{\frac{3}{q}-1}}\hspace{-0.2em} + \| u_0 \|_{\rB_{q,r}^{\frac{3}{q}-1}}\hspace{-0.2em} < r_0$ the system \eqref{eq:ns-darcyp}--\eqref{eq:ns-darcyint} admits a 
unique,  global, strong solution 
		\begin{align*}
			&p \in \rL^r_{\mu}(\R_+; \rH^{2,q}(\Omega_p)) \cap \rH^{1,r}_{\mu}(\R_+; \rL^q(\Omega_p))
			\cap \rC([0,\infty), \rB^{\frac{3}{q}-1}_{q,r}(\Omega_p)),\\
			&u \in 
			\rL^r_{\mu}(\R_+; \rH^{2,q}(\Omega_f)) \cap \rH^{1,r}_{\mu}(0,T; \rL^q_{\sigma}(\Omega_f))
			\cap \rC([0,\infty), \rB^{\frac{3}{q}-1}_{q,r}(\Omega_f)), 
		\end{align*}
	\end{enumerate} 
\end{theorem}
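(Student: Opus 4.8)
The plan is to recast the system \eqref{eq:ns-darcyp}--\eqref{eq:ns-darcyint} as an instance of the abstract semilinear problem \eqref{eq:acp-abstract} on the state space $X_0 = \rL^q(\Omega_p)\times\rL^q_{\sigma}(\Omega_f)$, taking $\mathbb{A}$ to be the coupling operator introduced in \autoref{sec:boundary perturbation}, so that $X_1 = D(\mathbb{A})$ encodes the Beaver-Joseph-Saffman interface conditions \eqref{eq:ns-darcyint}. Since the porous equation is linear, the whole nonlinearity is carried by the convective term: for $v_i = (p_i,u_i)^\top$ I set $F(v_1,v_2) := (0,\,-\mathcal{P}(u_1\cdot\nabla u_2))^\top$, which is bilinear from $X_\beta\times X_\beta$ into $X_0$, and I collect the data as $f := (f_p,f_f)^\top \in \E_{0,\mu}(0,T)$ and $v_0 := (p_0,u_0)^\top$. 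With this identification a solution $v\in\E_{1,\mu}(0,T)$ of \eqref{eq:acp-abstract} is precisely a strong solution of the stated regularity, the time-continuity into the trace space being furnished by the embedding \eqref{eq:embedding mr}.

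First I would verify assumption (MR). By the linear analysis of \autoref{sec:linear}, the operator $\mathbb{A}_0$ with homogeneous boundary conditions is $\mathcal{R}$-sectorial on the UMD space $X_0$, while the lower-order coupling terms collected in $\Phi$ constitute an admissible Staffans-Wei{\ss} perturbation. Applying \cref{thm:boundary perturbation mr} with a pair $0\le\gamma<\beta\le 1$ furnished by $D(\mathbb{A}_m)\subset X_\beta$ and $X_\gamma\hookrightarrow Z$ then shows that $\mathbb{A}$ is $\mathcal{R}$-sectorial, hence has maximal $\rL^r$-regularity on $X_0$. The same analysis delivers the negative spectral bound $s(\mathbb{A})<0$ required for part (ii), reflecting the exponential decay of the coupled semigroup.

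Next I would fix the interpolation exponent $\beta = \tfrac14 + \tfrac{3}{4q}$, so that $X_\beta = [X_0,X_1]_{\beta}\hookrightarrow\rH^{2\beta,q}$. Condition (H1) then follows from Hölder's inequality combined with the Sobolev embeddings $\rH^{2\beta,q}\hookrightarrow\rL^{q_1}$ and $\rH^{2\beta-1,q}\hookrightarrow\rL^{q_2}$ with $\tfrac1{q_1}+\tfrac1{q_2}=\tfrac1q$; the borderline choice of exponents forces $4\beta = 1+\tfrac3q$, which is exactly why this value of $\beta$ is critical and why $\beta>\tfrac12$ requires $q<3$. With this $\beta$ the critical weight is $\mu_c = 2\beta-1+\tfrac1r = \tfrac{3}{2q}-\tfrac12+\tfrac1r = \mu$, so (H2) holds with equality, and assumption (S) holds since $X_0$ is UMD and the mixed-derivative embedding is available for $\mathcal{R}$-sectorial operators. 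The admissibility of the weight is equivalent to the standing hypotheses: $\mu_c>\tfrac1r$ is equivalent to $q<3$, and $\mu_c\le 1$ is equivalent to $\tfrac{2}{3r}+\tfrac1q\le 1$. Finally the trace space is $X_{\gamma,\mu_c}=(X_0,X_1)_{\mu_c-1/r,\,r}=(X_0,X_1)_{\frac{3}{2q}-\frac12,\,r}$, which by the function-space identifications of \autoref{sec:prelim} coincides with the critical Besov space $\rB^{3/q-1}_{q,r,\Gamma_1}(\Omega_p)\times\big(\rB^{3/q-1}_{q,r,\Gamma_2}(\Omega_f)\cap\rL^q_{\sigma}(\Omega_f)\big)$ of the hypotheses, the interface condition surviving the interpolation in the appropriate range and degenerating to the Lions-Magenes space $\rH^{1/2}_{00,\Gamma}$ at $q=2$.

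With Assumptions (A) verified at the critical weight $\mu=\mu_c$, part (i) follows from the critical-space version of \cref{thm:local}(a), and part (ii) from \cref{thm:local}(b), using $s(\mathbb{A})<0$ to obtain a global solution for sufficiently small initial data. I expect the main obstacle to be twofold. The deeper difficulty is the linear input that the non-diagonal coupling operator $\mathbb{A}$ is $\mathcal{R}$-sectorial with negative spectral bound, which rests entirely on the boundary-perturbation machinery of \autoref{sec:boundary perturbation} and the decoupling of the two equations. The second, more technical, obstacle is the sharp identification of the real-interpolation trace space with the critical Besov space: one must track precisely when the interface and Dirichlet boundary conditions survive at the borderline regularity $\tfrac3q-1$, in particular across the threshold $q=2$ where the Lions-Magenes space appears.
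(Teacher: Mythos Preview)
Your proposal is essentially the paper's proof: rewrite the system as the abstract problem \eqref{eq:acp-abstract}, verify (MR) via \cref{prop:mr}(b) (which is exactly the $\mathcal{R}$-sectoriality argument you sketch), take $\beta=\tfrac14(1+\tfrac3q)$, verify (H1) by H\"older plus Sobolev (this is \cref{lem:F} together with \cref{lem:Xbeta}), compute $\mu_c$, identify the trace space via \cref{prop:mr}(c), and invoke \cref{thm:local}.

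There is one point you gloss over that the paper handles explicitly. You assert that (S) ``holds since $X_0$ is UMD and the mixed-derivative embedding is available for $\mathcal{R}$-sectorial operators,'' but $\mathcal{R}$-sectoriality of $\mathbb{A}$ alone does not give the abstract embedding $\rH^{1,q}(\R;X_0)\cap\rL^q(\R;X_1)\hookrightarrow\rH^{1-\beta,q}(\R;X_\beta)$ for the \emph{coupled} complex interpolation scale $X_\beta=[X_0,D(\mathbb{A})]_\beta$, because one does not know $X_\beta=D((-\mathbb{A})^\beta)$ without a bounded $\mathcal{H}^\infty$-calculus. The paper's way around this is to observe that in the proof of \cref{thm:local} condition (S) may be weakened to a mixed-derivative theorem in the \emph{concrete} Bessel potential scale $\rH^{2\beta,q}$ (no boundary conditions), which is available from \cite{MS12}; the embedding $X_\beta\hookrightarrow\rH^{2\beta,q}$ from \cref{lem:Xbeta} is then what links the two. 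You have this embedding in your argument, so the fix is immediate, but the step as you wrote it is not justified.
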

}

\begin{remark}\rm
We point out that, thanks to the regularity of the strong solution in \cref{thm:main bjs critical}, the solution satisfies the interface conditions, in particular the Beavers-Joseph-Saffman conditions \eqref{eq:ns-darcyint}$_3$, in the sense of traces. 
\end{remark}

Blow ups in finite time are characterized by the following result.

\begin{cor}[Finite-in-time-Blow up]\label{cor:bjs blow up}
	Assume that $(p_0, u_0) \in 
	\rB_{r,q,\Gamma_1}^{\frac{3}{q}-1}(\Omega_p)\times (\rB_{r,q,\Gamma_2}^{\frac{3}{q}-1}(\Omega_f) \cap \rL^q_{\sigma}(\Omega_f))$, $(f_p,f_f)\in \rL^r_{\mu}(0,T;L^q(\Omega_p))\times \rL^r_{\mu}(0,T;L^q_{\sigma}(\Omega_f))$ and let $(p,u)$ be the unique local solution from \cref{thm:main bjs critical} on the maximal time interval $[0,t_+)$.
	\begin{itemize}
		\item[(i)] 
		If $(p,u)$ blows up in finite time, i.e. $t_+ < \infty$, then
		\begin{equation*}
			\limsup_{t \to t_+} 
			\biggl( \| p(t) \|_{\rB^{\frac{3}{q}-1}_{q,r,\Gamma_1}} \hspace{-0.5em} + \| u(t) \|_{\rB^{\frac{3}{q}-1}_{q,r,\Gamma_2}} \biggr) 
			= + \infty .
		\end{equation*}
		\item[(ii)] 
		If $(p,u)$ blows up in finite time, i.e. $t_+ < \infty$, then 
		\begin{equation*}
			\limsup_{T \to t_+} \int_0^{T} 
			\biggl(\| p(s) \|_{\rH^{2\mu,q}}^r + \| u(s) \|_{\rH^{2\mu,q}}^r \mathrm{d} s \biggr) = + \infty ,
		\end{equation*}
		where $\mu := \frac{3}{2q}+\frac{1}{r}-\frac{1}{2}$.
	\end{itemize} 
\end{cor}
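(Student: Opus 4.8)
The plan is to obtain \cref{cor:bjs blow up} as a direct specialization of the two abstract blow-up criteria recorded after \cref{thm:local} to the concrete Navier-Stokes-Darcy system, once the abstract data have been matched to the equations. Recall that here $X_0 = \rL^q(\Omega_p) \times \rL^q_\sigma(\Omega_f)$, $X_1 = D(\mathbb{A})$, and the bilinear map $F$ encodes the convective term $-u \cdot \nabla u$ together with the pressure coupling, so that \eqref{eq:ns-darcyp}--\eqref{eq:ns-darcyint} takes the abstract form \eqref{eq:acp-abstract}. The verification of Assumptions (A) — in particular the bilinear estimate (H1) with interpolation exponent $\beta = \frac{3}{4q}+\frac14$ and the resulting critical-weight relation $\mu_c = 2\beta - 1 + \frac1r = \frac{3}{2q} - \frac12 + \frac1r$ — is precisely what underlies the proof of \cref{thm:main bjs critical}. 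I would reuse it verbatim, so that the solution $(p,u)$ furnished by \cref{thm:main bjs critical} coincides, on its maximal interval $[0,t_+)$, with the abstract solution to which the criteria apply, at the critical weight $\mu = \mu_c$.

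For part (i) I would identify the critical trace space $X_{\gamma,\mu_c} = (X_0,X_1)_{\mu_c-\frac1r,r}$. Since $X_1 = D(\mathbb{A})$ lies in the second-order scale, real interpolation yields $(X_0,X_1)_{\theta,r} = \rB^{2\theta}_{q,r,\Gamma_1}(\Omega_p) \times (\rB^{2\theta}_{q,r,\Gamma_2}(\Omega_f) \cap \rL^q_\sigma(\Omega_f))$ with the boundary conditions on the outer boundaries $\Gamma_1,\Gamma_2$ built in as in the definition of $\rB^s_{q,r,\Gamma}$. Taking $\theta = \mu_c - \frac1r = \frac{3}{2q}-\frac12$, so that $2\theta = \frac3q-1$, gives $X_{\gamma,\mu_c} = \rB^{\frac3q-1}_{q,r,\Gamma_1}(\Omega_p) \times (\rB^{\frac3q-1}_{q,r,\Gamma_2}(\Omega_f) \cap \rL^q_\sigma(\Omega_f))$. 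The first abstract criterion, namely $\limsup_{t\uparrow t_+}\|(p,u)(t)\|_{X_{\gamma,\mu_c}} = +\infty$ whenever $t_+<\infty$, then reads exactly as the claimed divergence of $\|p(t)\|_{\rB^{3/q-1}_{q,r,\Gamma_1}} + \|u(t)\|_{\rB^{3/q-1}_{q,r,\Gamma_2}}$.

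For part (ii) I would instead identify the complex interpolation space $X_{\mu_c} = [X_0,X_1]_{\mu_c}$. Along the same second-order scale one obtains $X_{\mu_c} = \rH^{2\mu_c,q}(\Omega_p) \times (\rH^{2\mu_c,q}(\Omega_f)\cap \rL^q_\sigma(\Omega_f))$, and a direct computation gives $2\mu_c = \frac3q + \frac2r - 1 = 2\mu$ with $\mu = \frac{3}{2q}+\frac1r-\frac12$. Substituting into the Serrin-type criterion \eqref{eq:serrin}, that is $\int_0^{t_+}\|(p,u)(t)\|_{X_{\mu_c}}^r\,\mathrm{d}t = +\infty$, yields precisely the stated divergence of $\int_0^{T}\bigl(\|p(s)\|_{\rH^{2\mu,q}}^r + \|u(s)\|_{\rH^{2\mu,q}}^r\bigr)\mathrm{d}s$.

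The main obstacle is not the abstract application but the two interpolation identifications: one must show that both the real and the complex interpolation of $X_0$ with the coupled domain $X_1 = D(\mathbb{A})$ reproduce the expected Besov, respectively Bessel-potential, scale carrying only the outer boundary conditions on $\Gamma_1$ and $\Gamma_2$, while the Beaver-Joseph-Saffman interface conditions on $\Gamma$ encoded in $D(\mathbb{A})$ create no additional constraints at these low regularities. This rests on the fact, established in the linear theory via the Staffans-Wei{\ss} perturbation (\cref{prop:sw}, \cref{thm:boundary perturbation mr}), that the decoupled operator $\mathbb{A}_0$ admits a bounded $\mathcal{H}^\infty$-calculus, whence $D((\mathbb{A}_0-\omega)^\theta)$ agrees with the interpolation scale, and that $\mathbb{A}$ differs from $\mathbb{A}_0$ only by lower-order boundary perturbations. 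Checking that this coupling does not shift the scale — in particular the borderline behaviour across $s = \frac1q$ that decides whether a trace constraint survives — is the one point I would treat with care.
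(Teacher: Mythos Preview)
Your treatment of part~(i) matches the paper exactly: identify the critical trace space via \cref{prop:mr}(c), as recorded in \eqref{eq:trace spaces}, and apply the abstract criterion from \cref{sec:prelim}.

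For part~(ii) your route diverges from the paper's, and the divergence matters. You propose to identify the complex interpolation space $X_{\mu_c} = [X_0,X_1]_{\mu_c}$ with the concrete Bessel potential product and then invoke \eqref{eq:serrin} directly. But the paper never establishes this complex interpolation \emph{equality}: \cref{lem:Xbeta} gives only the embedding $X_\beta \hookrightarrow \rH^{2\beta,q}(\Omega_p) \times (\rH^{2\beta,q}(\Omega_f)\cap \rL^q_\sigma(\Omega_f))$, and the perturbation argument from \cite{HHK:04} used in the proof of \cref{prop:mr}(c) is carried out only for the \emph{real} interpolation scale. The embedding alone goes in the wrong direction for your purpose --- divergence of $\int_0^{t_+}\|v\|_{X_{\mu_c}}^r$ does not force divergence of the weaker $\rH^{2\mu,q}$-norm integral. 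The paper sidesteps this entirely: since the bilinear estimate in \cref{lem:F} is already phrased in terms of $\rH^{s,q}$-norms (not the abstract $X_\beta$), one can rerun the proof of \eqref{eq:serrin} with condition~(S) replaced by the concrete mixed derivative theorem for Bessel potential spaces without boundary conditions from \cite{MS12}, closing the argument directly in the $\rH^{2\mu,q}$-scale and never needing to identify $X_{\mu_c}$. Your proposed fix via an $\mathcal{H}^\infty$-calculus for $\mathbb{A}_0$ combined with boundary perturbation may well eventually yield the complex interpolation identity, but it is genuinely additional work --- precisely the work the paper's approach is designed to avoid.
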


Finally, the following results show that additional regularity of the external forces yields additional regularity of the solution. 
The first interior regularity result is not really surprising, since interior regularity is not affected by the interface conditions. 

\begin{cor}[Higher interior regularity]\label{thm:reg in bjs}
	Assume $r \in (1,\infty)$ and $q \in (5/2,3)$ such that $\frac{2}{r} + \frac{1}{q} \leq 1$.
	Let $(p,u)$ be the unique solution to the initial data $(p_0,u_0) \in	\rB_{r,q,\Gamma_1}^{\frac{3}{q}-1}(\Omega_p)\times (\rB_{r,q,\Gamma_2}^{\frac{3}{q}-1}(\Omega_f) \cap \rL^q_{\sigma}(\Omega_f))$ from \cref{thm:main bjs critical}.
	\begin{enumerate}[(a)]
		\item If $(f_p,f_f) \in \rC^k((0,T),\rL^q(\Omega_p) \times \rL^q_{\sigma}(\Omega_f))$ for $k \in \{\infty,\omega\}$, where $\rC^\omega$ denotes the real analytic functions, then
		\begin{align*}
			p \in \rC^k((0,T); \rH^{2,q}(\Omega_p)) \quad &\text{ and } \quad u \in 
			\rC^k((0,T); \rH^{2,q}(\Omega_f)\cap \rL^q_{\sigma}(\Omega_f)) .
		\end{align*}
		\item If $f_p \in \rC^k((0,T) \times\Omega_p)$ and $f_f \in \rC^k((0,T) \times \Omega_f)$ for $k \in \{\infty,\omega\}$, then
		\begin{align*}
			p \in \rC^k((0,T) \times \Omega_p) \quad &\text{ and } \quad u \in 
			\rC^k((0,T) \times \Omega_f) .
		\end{align*}
	\end{enumerate}
\end{cor}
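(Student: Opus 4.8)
The plan is to read part~(a) as a direct application of Angenent's parameter trick (\cref{thm:parameter trick}), and then to bootstrap this time regularity up to joint space--time regularity in part~(b) by an interior argument that is blind to the interface conditions. Throughout I recast the coupled system in the abstract form
\begin{equation*}
	w' + A w = F(w,w) + f, \qquad w(0) = w_0,
\end{equation*}
where $w = (p,u)^\top$, $A$ is the coupling operator of \cref{sec:boundary perturbation}, $F(w,w) = (0,-\mathcal{P}(u\cdot\nabla u))^\top$ is the Helmholtz-projected convection term acting only on the fluid component, and $f = (f_p,f_f)^\top$. By the analysis preceding \cref{thm:main bjs critical}, $A$ is $\mathcal{R}$-sectorial on $X_0 = \rL^q(\Omega_p)\times\rL^q_\sigma(\Omega_f)$, hence has maximal $\rL^r$-regularity, and $w \in \E_{1,\mu}(0,T)$.

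For part~(a) I would verify the hypothesis of \cref{thm:parameter trick}: that for every $g \in \E_{0,\mu}(0,T')$ the linearized problem
\begin{equation*}
	v' + A v + F(u,v) + F(v,u) = g, \qquad v(0) = 0,
\end{equation*}
has a unique solution $v \in \E_{1,\mu}(0,T')$. Since $A$ has maximal regularity, it suffices to see that $v \mapsto F(u,v)+F(v,u)$ is an admissible lower-order perturbation. Using (H1), the embedding $\E_{1,\mu}\hookrightarrow\mathrm{BUC}(J;X_{\gamma,\mu})$, and the hypotheses $q \in (5/2,3)$, $\tfrac{2}{r}+\tfrac{1}{q}\le 1$ (which place the trace space high enough in the scale), this map is bounded from $\E_{1,\mu}$ to $\E_{0,\mu}$ with norm that is small on short intervals; a Neumann-series argument over a partition of $(0,T')$ then yields unique solvability of the linearization. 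Taking $g = f \in \rC^k((0,T);X_0)$ with $k\in\{\infty,\omega\}$, \cref{thm:parameter trick} gives $w \in \rC^k((0,T);X_1)$, and since $X_1 = D(A)\hookrightarrow \rH^{2,q}(\Omega_p)\times(\rH^{2,q}(\Omega_f)\cap\rL^q_\sigma(\Omega_f))$, part~(a) follows.

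For part~(b) I exploit that interior regularity is untouched by the interface conditions. Fixing open sets compactly contained in $\Omega_p$, resp.\ $\Omega_f$, part~(a) shows that every $\partial_t^j p$, $\partial_t^j u$ lies in $\rH^{2,q}$ and depends $\rC^k$-continuously on $t$. Reading the bulk equations as the spatial elliptic problems $-k\Delta p = f_p - \partial_t p$ and $-\div\sigma(u,\pi) = f_f - \partial_t u - (u\cdot\nabla)u$, $\div u = 0$, and using $f_p,f_f \in \rC^k((0,T)\times\Omega)$, interior elliptic (Stokes) regularity improves the spatial smoothness by two orders per step, with the interior pressure recovered from the pressure Poisson equation. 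In the $\rC^\infty$ case, iterating yields $p,u \in \rH^{m,q}_{\mathrm{loc}}$ for all $m$, hence spatial smoothness, which combined with the time smoothness of part~(a) gives $p\in\rC^\infty((0,T)\times\Omega_p)$ and $u\in\rC^\infty((0,T)\times\Omega_f)$.

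The genuinely delicate point — and the main obstacle — is the analytic case $k=\omega$ of part~(b), since the naive bootstrap yields $\rC^\infty$ bounds without the factorial derivative control needed for real-analyticity. Here I would invoke Angenent's trick \cite{Ang:90} once more, but now with respect to a spatial parameter: choose a real-analytic family of diffeomorphisms $\theta_\lambda$ of $\Omega$, $\lambda$ near $0$, equal to the identity in neighbourhoods of $\Gamma$, $\Gamma_1$ and $\Gamma_2$. The pulled-back unknown $w_\lambda := w\circ\theta_\lambda$ solves a transformed system with $\lambda$-analytic coefficients and with \emph{unchanged} boundary and interface conditions, so the associated operator remains $\mathcal{R}$-sectorial and its linearization stays well posed. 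The implicit function theorem then makes $\lambda\mapsto w_\lambda\in\E_{1,\mu}$ real-analytic, and differentiating at $\lambda=0$ transfers this into interior spatial analyticity; together with part~(a) this gives joint real-analyticity in space and time in the interior. The technical burden is precisely to establish analytic dependence of the transformed coefficients and to maintain uniform $\mathcal{R}$-sectoriality along the family, while ensuring $\operatorname{supp}(\theta_\lambda - \mathrm{id})$ stays strictly interior so that the interface conditions never enter the estimates.
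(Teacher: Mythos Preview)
Your approach is essentially the same as the paper's: for part~(a) both verify the hypothesis of \cref{thm:parameter trick} by showing the linearized problem is uniquely solvable via a smallness-on-short-intervals plus partition argument, and for part~(b) both observe that after a cut-off supported away from $\Gamma$ the interface conditions disappear and one is reduced to the standard interior space--time parameter trick of \cite[Section~9.4]{PS16}. Two minor technical remarks: the paper runs the linearized well-posedness via the method of continuity along the homotopy $\mb{A}_\eta := \mb{A} - \eta\mb{B}$ rather than a Neumann series, and the key estimate it uses is the bilinear bound $\|F(v_1,v_2)\|_{\E_{0,\mu}} \le C\|v_1\|_{\E_{1,\mu}}\|v_2\|_{\E_{1,\mu}}$ obtained from the mixed derivative embedding $\E_{1,\mu}\hookrightarrow \rL^{2r}_\tau(0,T;\rH^{s,q})$ --- your route through (H1) and the $\mathrm{BUC}(J;X_{\gamma,\mu})$ embedding is not quite sharp at the critical weight since $X_{\gamma,\mu_c}$ does not embed into $X_\beta$, so you should invoke the $\rL^{2r}_\tau$ embedding instead.
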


The next result concerns  the regularity at the interface. Assuming  that $\Gamma$ is analytic, we  obtain the following result.  

\begin{cor}[Higher regularity at the interface]\label{thm:reg bjs}
Assume that $\Gamma$ is analytic and let  $\frac{1}{r}+\frac{3}{2q} < 1$.
	Let $(p,u)$ be the unique solution for  $(p_0,u_0) \in	\rB_{r,q,\Gamma_1}^{\frac{3}{q}-1}(\Omega_p)\times (\rB_{r,q,\Gamma_2}^{\frac{3}{q}-1}(\Omega_f) \cap \rL^q_{\sigma}(\Omega_f))$ 
from \cref{thm:main bjs critical}.

	If $f_p \in \rC^k((0,T) \times\Gamma)$ and $f_f \in \rC^k((0,T) \times \Gamma)$ for $k \in \{\infty,\omega\}$, then
	\begin{align*}
		p \in \rC^k((0,T) \times \Gamma) \quad &\text{ and } \quad u \in 
		\rC^k((0,T) \times \Gamma) .
	\end{align*}
\end{cor}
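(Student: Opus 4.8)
The plan is to prove tangential smoothness along $\Gamma$ by Angenent's parameter trick, mirroring the proof of the interior result \cref{thm:reg in bjs}, but now with a family of diffeomorphisms that \emph{preserve the interface} $\Gamma$ and act by tangential translations along it. Time regularity is obtained exactly as in \cref{thm:reg in bjs} via \cref{thm:parameter trick}; the new ingredient is the tangential spatial regularity at $\Gamma$. Since the statement is local, I fix a point $x_0 \in \Gamma$ and a subinterval $[t_1,t_2] \subset (0,T)$, and it suffices to produce the tangential space--time derivatives there, a covering argument then giving the claim on all of $(0,T)\times\Gamma$.

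First, using that $\Gamma$ is analytic, I would choose analytic vector fields $V_1,\dots,V_m$ defined near $x_0$, tangent to $\Gamma$ and spanning $T_{x_0}\Gamma$, extend them into $\Omega_f$ and $\Omega_p$ so that their flows preserve the decomposition $\Omega = \Omega_f \cup \Omega_p$ and the level surfaces of $\mathrm{dist}(\cdot,\Gamma)$, and cut them off away from $\Gamma$ and (in time) near $t=0$ so that the initial datum is left unchanged. This produces an analytic family $\Theta_\lambda$, $\lambda = (\lambda_1,\dots,\lambda_m)$ near $0$, of diffeomorphisms with $\Theta_0 = \mathrm{id}$, each mapping $\Gamma$ onto itself. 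I would then pull the solution back by setting $p_\lambda := p \circ \Theta_\lambda$ and letting $u_\lambda$ be the Piola transform of $u \circ \Theta_\lambda$, so that $\operatorname{div} u_\lambda = 0$ and the normal--tangential splitting of the traces entering \eqref{eq:ns-darcyint} is preserved. A change of variables recasts \eqref{eq:ns-darcyp}--\eqref{eq:ns-darcyint} as a transformed coupled system
\begin{equation*}
	\partial_t w_\lambda + \mathbb{A}(\lambda) w_\lambda = F(\lambda)(w_\lambda,w_\lambda) + f_\lambda, \qquad w_\lambda(0) = w_0,
\end{equation*}
with $w_\lambda = (p_\lambda, u_\lambda)$; the transformed Laplacian, stress tensor and Beaver--Joseph--Saffman interface operators making up $\mathbb{A}(\lambda)$ and $F(\lambda)$ depend analytically on $\lambda$, because both $\Theta_\lambda$ and its induced metric do. Since $\Theta_\lambda$ moves points only tangentially along $\Gamma$, the assumed $\rC^k$-regularity of $(f_p,f_f)$ on $(0,T)\times\Gamma$ is precisely what makes $\lambda \mapsto f_\lambda = f\circ\Theta_\lambda \in \E_{0,\mu}$ of class $\rC^k$.

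Next I would invoke the parameter trick. The maximal-$\rL^r$-regularity and $\mathcal{R}$-sectoriality of the coupling operator established earlier make the linearised problem $v' + \mathbb{A}v + F(u,v) + F(v,u) = g$, $v(0) = 0$ uniquely solvable in $\E_{1,\mu}$ (the hypothesis of \cref{thm:parameter trick}), so the implicit function theorem applied to the $\lambda$-family yields that $\lambda \mapsto w_\lambda \in \E_{1,\mu}$ is $\rC^\infty$, respectively $\rC^\omega$, near $\lambda = 0$. Differentiating in $\lambda_1,\dots,\lambda_m$ at $\lambda = 0$ exhibits all tangential spatial derivatives of $w$ as elements of $\E_{1,\mu}$. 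Combining this with the embedding \eqref{eq:embedding mr}, $\E_{1,\mu} \hookrightarrow \mathrm{BUC}(J;X_{\gamma,\mu})$, and a trace embedding of $X_{\gamma,\mu}$ into $\rC(\Gamma)$ --- valid precisely under the hypothesis $\tfrac{1}{r} + \tfrac{3}{2q} < 1$, which is exactly the regularity needed for the trace of $X_{\gamma,\mu}$ onto the two-dimensional surface $\Gamma$ to be continuous --- together with the time regularity from \cref{thm:reg in bjs}, one concludes that all tangential space--time derivatives of $(p,u)$ are continuous on $(0,T)\times\Gamma$, i.e. $(p,u) \in \rC^k((0,T)\times\Gamma)$.

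I expect the main obstacle to be verifying that the transformed family $\mathbb{A}(\lambda)$ still meets the hypotheses of the boundary-perturbation machinery of \cref{thm:boundary perturbation mr}, so that the linearised problem stays well-posed in $\E_{1,\mu}$ uniformly for small $\lambda$: one must check that the tangential diffeomorphisms preserve the Staffans--Wei{\ss} coupling structure used to join the Darcy and Navier--Stokes blocks and do not destroy the $\mathcal{R}$-sectoriality, all while keeping the dependence on $\lambda$ analytic. A secondary difficulty is to make rigorous, via traces, the meaning of $f$ and of the conclusion as functions on the measure-zero set $(0,T)\times\Gamma$, and to confirm that $\tfrac{1}{r}+\tfrac{3}{2q}<1$ indeed delivers the continuity of these traces; this is the step that excludes the Hilbert-space case $r=q=2$.
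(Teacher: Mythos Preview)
Your outline is essentially the paper's own argument: Angenent's parameter trick applied to a family of $\Gamma$-preserving, purely tangential diffeomorphisms (the paper constructs them explicitly via a chart $\phi^{\pm}(y,z)=\varphi(y)+z\,n_\Gamma$ and truncated shifts $\tau_\xi(y,z)=(y+\xi\chi_0(y)\zeta_0(z),z)$ rather than vector-field flows, and uses plain composition rather than the Piola transform), with the required linearised isomorphism taken verbatim from the proof of \cref{thm:reg in bjs} and the final passage to $\rC((0,T)\times\Gamma)$ obtained---under exactly the hypothesis $\tfrac{1}{r}+\tfrac{3}{2q}<1$---via the mixed-derivative embedding $\rH^{\theta,r}_\mu(0,T;\rH^{2-2\theta-\frac{1}{q},q}(\Gamma))\hookrightarrow\rC((0,T)\times\Gamma)$ rather than through $X_{\gamma,\mu}$. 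Note that your ``main obstacle'' is not one: the implicit function theorem requires only that $D_vH$ be an isomorphism at the single parameter value $\lambda=0$, and this is precisely what the proof of \cref{thm:reg in bjs} already supplies, so no uniform-in-$\lambda$ $\mathcal{R}$-sectoriality of $\mathbb{A}(\lambda)$ needs to be verified.
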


\begin{remark}\rm
Let us point out that \cref{thm:reg bjs} is about regularity in tangential direction at the interface $\Gamma$. 
Furthermore, note that in \cref{thm:reg in bjs} and \cref{thm:reg bjs} the Hilbert space case is excluded which is a main reason to consider the 
	$\rL^r_t$-$\rL^q_x$-setting here.
\end{remark}

\section{Main results for Beaver-Joseph interface conditions}
\label{sec:BJ}
	
	In this section we discuss the Navier-Stokes-Darcy model with Beaver-Joseph conditions. 
	Our main result shows strong well-posedness for initial data in the critical Lions-Magenes spaces. 
	It can be seen as an analogue for our setting 
	to the celebrated result of Fujita-Kato \cite{FK:64}.
	
	\begin{theorem}[Strong well-posedness in critical spaces]\label{cor:bj r=q=2} \mbox{}\\
		Assume that $\beta > 0$ is small enough. Let $(p_0,u_0) 
		\in 
		\rH^{\frac{1}{2}}_{00,\Gamma_1}(\Omega_p)\times \big( \rH^{\frac{1}{2}}_{00,\Gamma_2}(\Omega_f) \cap \rL^2_{\sigma}(\Omega_f) \big)$, $(f_p,f_f)\in \rL^r_{\mu}(0,T;L^q(\Omega_p))\times \rL^r_{\mu}(0,T;L^q_{\sigma}(\Omega_f))$. 
		\begin{enumerate}[(i)]
			\item 
			Then there exists $T>0$ such that the system \eqref{eq:ns-darcyp}--\eqref{eq:ns-darcyint}$_2$ with Beavers-Joseph condition \eqref{bdry:BJ}  admits a unique, strong solution 
			\begin{align*}
				&p \in \rL^2_{\frac{3}{4}}(0,T; \rH^2(\Omega_p)) \cap \rH^1_{\frac{3}{4}}(0,T; \rL^2(\Omega_p))
				\cap \rC([0,T), \rH^{\frac{1}{2}}_{00,\Gamma_1}(\Omega_p))
				\cap \rC((0,T), \rH^1_{0,\Gamma_1}(\Omega_p)) ,\\
				&u \in \rL^2_{\frac{3}{4}}(0,T;\rH^{2}(\Omega_f)) \cap \rH^{1}_{\frac{3}{4}}(0,T;\rL_{\sigma}^2(\Omega_f))\cap \rC([0,T),\rH_{00,\Gamma_2}^{\frac{1}{2}}(\Omega_f))
				\cap \rC((0,T),\rH^1_{0,\Gamma_2}(\Omega_f)) .
			\end{align*}
			\item 
			There exists a small constant $r_0 > 0$ such that for initial data $(p_0,u_0)$  
			with $\| p_0 \|_{\rH^{\frac{1}{2}}_{00,\Gamma_1}}\hspace{-1em} + \| u_0 \|_{\rH^{\frac{1}{2}}_{00,\Gamma_2}} \hspace{-1em} < r_0$ the system \eqref{eq:ns-darcyp}--\eqref{eq:ns-darcyint}$_2$ with Beavers-Joseph condition \eqref{bdry:BJ} admits a unique, global, strong solution 
			\begin{align*}
				&p \in \rL^2_{\frac{3}{4}}(\R_+; \rH^2(\Omega_p)) \cap \rH^1_{\frac{3}{4}}(\R_+; \rL^2(\Omega_p))
				\cap \rC([0,\infty), \rH^{\frac{1}{2}}_{00,\Gamma_1}(\Omega_p))
				\cap \rC((0,\infty), \rH^1_{0,\Gamma_1}(\Omega_p)) ,\\
				&u \in \rL^2_{\frac{3}{4}}(\R_+;\rH^{2}(\Omega_f)) \cap \rH^{1}_{\frac{3}{4}}(\R_+;\rL_{\sigma}^2(\Omega_f))\cap \rC([0,\infty),\rH_{00,\Gamma_2}^{\frac{1}{2}}(\Omega_f))
				\cap \rC((0,\infty),\rH^1_{0,\Gamma_1}(\Omega_f)) .
			\end{align*}
		\end{enumerate}
	\end{theorem}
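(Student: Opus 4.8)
The plan is to rewrite the Beaver--Joseph system \eqref{eq:ns-darcyp}--\eqref{eq:ns-darcyint}${}_2$, \eqref{bdry:BJ} as the abstract problem \eqref{eq:acp-abstract} on the Hilbert space $X_0=\rL^2(\Omega_p)\times\rL^2_\sigma(\Omega_f)$ and to verify Assumptions (A), so that \cref{thm:local} applies with $r=2$ and $\mu=\tfrac34$. I keep the operator of \cref{sec:boundary perturbation} but replace the tangential interface condition by the Beaver--Joseph law \eqref{bdry:BJ}; equivalently the boundary perturbation gains the extra contribution $\beta k\binom{0}{\sum_\ell(\nabla_\Gamma p\cdot t^\ell_\Gamma)t^\ell_\Gamma}$, whose only new feature is the surface gradient $\nabla_\Gamma p$. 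The nonlinearity is $F\binom pu=\binom{0}{-\mathcal P(u\cdot\nabla u)}$ and I take the nonlinearity exponent $\beta_{H1}=\tfrac58$, so that $X_{\beta_{H1}}\hookrightarrow\rH^{5/4}$ and $\mu_c=2\beta_{H1}-1+\tfrac1r=\tfrac34=\mu$; the trace space is then $X_{\gamma,\mu}=(X_0,X_1)_{1/4,2}=\rH^{1/2}_{00,\Gamma}$, matching the Lions--Magenes initial data, and the data space is $\E_{0,\mu}(0,T)$.

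The one genuinely new point is (MR), i.e.\ that $\mathbb{A}_{BJ}$ is $\mathcal R$-sectorial on $X_0$. In the Hilbert space setting this reduces to ordinary sectoriality, since in $\rL^2$ an operator family is $\mathcal R$-bounded if and only if it is uniformly bounded; this is exactly why the argument is confined to $q=2$ and why the $\rL^q$-case for $q\ne2$ remains open. I obtain sectoriality from the sesquilinear form
\begin{align*}
a\bigl((p,u),(q,v)\bigr)
&= k\,(\nabla p,\nabla q)_{\Omega_p}+2\mu\,\bigl(D(u),D(v)\bigr)_{\Omega_f} \\
&\quad -\int_\Gamma (u\cdot n_\Gamma)\,q+\int_\Gamma p\,(v\cdot n_\Gamma)+\beta\sum_{\ell}\int_\Gamma\bigl((u-k\nabla p)\cdot t^\ell_\Gamma\bigr)\bigl(v\cdot t^\ell_\Gamma\bigr)
\end{align*}
on $V=\rH^1_{\Gamma_1}(\Omega_p)\times(\rH^1_{\Gamma_2}(\Omega_f)\cap\rL^2_\sigma(\Omega_f))$, obtained by testing \eqref{eq:ns-darcyp} and \eqref{eq:ns-darcyu} against $q,v$ and inserting the interface conditions.

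Two structural facts make the form tractable. First, the two normal--coupling terms are skew: for $q=p$, $v=u$ their contribution to $\operatorname{Re} a\bigl((p,u),(p,u)\bigr)$ vanishes, so they never interfere with coercivity (this is what lets the Beaver--Joseph--Saffman case run for arbitrary $\beta$). Second, the Beaver--Joseph term $-\beta k\sum_\ell\int_\Gamma(\nabla_\Gamma p\cdot t^\ell_\Gamma)(u\cdot t^\ell_\Gamma)$ a priori involves $\nabla p|_\Gamma$ and is \emph{not} controlled by $\|p\|_{\rH^1}$. The decisive step is to integrate by parts along the \emph{closed} surface $\Gamma$ (no boundary term appears, since $\Gamma_1\cap\Gamma=\Gamma_2\cap\Gamma=\emptyset$), rewriting it as $\beta k\sum_\ell\langle p,\operatorname{div}_\Gamma((u\cdot t^\ell_\Gamma)t^\ell_\Gamma)\rangle_{\rH^{1/2}(\Gamma),\rH^{-1/2}(\Gamma)}$, which is bounded by $C\beta\,\|p\|_{\rH^1}\|u\|_{\rH^1}$. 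Since the principal part $k\|\nabla p\|_{\rL^2}^2+2\mu\|D(u)\|_{\rL^2}^2+\beta\|\Pi_\tau u\|_{\rL^2(\Gamma)}^2$ is coercive on $V$ by Poincar\'e's and Korn's inequalities (using the homogeneous conditions on $\Gamma_1,\Gamma_2$), for $\beta$ small enough the $\mathcal O(\beta)$ Beaver--Joseph term is absorbed and $\operatorname{Re} a\bigl((p,u),(p,u)\bigr)\ge c\,(\|p\|_{\rH^1}^2+\|u\|_{\rH^1}^2)$. Thus $a$ is bounded and strictly coercive; its associated operator $\mathbb{A}_{BJ}$ is m-sectorial, with $D(\mathbb{A}_{BJ})=\rH^{2,2}_{\Gamma_1}(\Omega_p)\times(\rH^{2,2}_{\Gamma_2}(\Omega_f)\cap\rL^2_\sigma(\Omega_f))$ by elliptic regularity for smooth $\Gamma$, and strict coercivity forces $s(\mathbb{A}_{BJ})<0$. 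In $\rL^2$ this yields (MR). This coercivity estimate---the interplay of the skew coupling, the surface integration by parts, and the smallness of $\beta$---is the main obstacle of the proof.

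It remains to check the other assumptions and to conclude. Condition (S) holds because $X_0=\rL^2$ is UMD and the mixed--derivative embedding is classical in Hilbert spaces. For (H1) I estimate the convection term by H\"older together with the three--dimensional embeddings $\rH^{5/4}\hookrightarrow\rL^{12}$ and $\rH^{1/4}\hookrightarrow\rL^{12/5}$, giving $\|\mathcal P(u\cdot\nabla u)\|_{\rL^2}\lesssim\|u\|_{\rH^{5/4}}^2$ and, by bilinearity, the quadratic Lipschitz bound in (H1); condition (H2) holds with equality, $2\beta_{H1}+\tfrac1r=\tfrac74=1+\mu$. Hence \cref{thm:local}(a) produces the unique local solution in $\E_{1,\mu}(0,T')$, which by \eqref{eq:embedding mr} lies in $\rC([0,T'),\rH^{1/2}_{00,\Gamma})$, while analyticity of the semigroup supplies the instantaneous smoothing into $\rC\bigl((0,T'),\rH^1_{0,\Gamma_1}(\Omega_p)\times\rH^1_{0,\Gamma_2}(\Omega_f)\bigr)$ via $D(\mathbb{A}_{BJ}^{1/2})$; this proves (i). Since $s(\mathbb{A}_{BJ})<0$, \cref{thm:local}(b) upgrades the solution to a global one for small initial data, which proves (ii).
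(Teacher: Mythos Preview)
Your argument is correct and follows essentially the same route as the paper: cast the system as \eqref{eq:acp-abstract}, verify Assumptions (A) with $r=q=2$, $\beta_{H1}=\tfrac58$, $\mu_c=\tfrac34$, and invoke \cref{thm:local}. The nonlinearity estimate and the identification of the trace space with the Lions--Magenes space are the same as in the paper (your $\rH^{5/4}\hookrightarrow\rL^{12}$, $\rH^{1/4}\hookrightarrow\rL^{12/5}$ is exactly \cref{lem:F} specialized to $q=2$).

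The only genuine difference is in how (MR) is obtained. The paper proceeds in two steps: \cref{lem:dissipative} shows that $e^{i\vartheta}\mathbb{A}_{BJ}$ is dissipative for $|\vartheta|<\theta<\tfrac\pi2$, using the duality bound $|\langle\nabla p,u\rangle_{\rH^{-1/2}(\Gamma),\rH^{1/2}(\Gamma)}|\le\|p\|_{\rH^1}\|u\|_{\rH^1}$ for the Beaver--Joseph cross term, and \cref{lem:maximal} establishes the range condition via Lax--Milgram plus $\rH^2$-regularity for the decoupled Neumann sub-problems; Lumer--Phillips then gives contraction semigroups for the rotated operators, and \cite[Cor.~3.9.9]{ABHN:11} produces the bounded analytic semigroup and hence maximal $\rL^2$-regularity. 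You instead package dissipativity and surjectivity into a single coercive sesquilinear form and read off m-sectoriality directly. Your surface integration by parts on the closed $\Gamma$ is precisely the dual formulation of the paper's $\rH^{-1/2}\times\rH^{1/2}$ pairing, and the smallness of $\beta$ plays the same role in both arguments. Your approach has the advantage that strict coercivity immediately yields $s(\mathbb{A}_{BJ})<0$, which you need for part~(ii); in the paper this is implicit in the dissipativity computation but not stated separately. The paper's two-step presentation, on the other hand, makes the $\rH^2$-regularity of $D(\mathbb{A}_{BJ})$ more explicit by reducing to the classical Neumann problems \eqref{eq:porous}--\eqref{eq:fluid2}, whereas you appeal to ``elliptic regularity'' in one line.
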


\begin{remark}\rm
	We point out that, thanks to the regularity of the strong solution in \cref{cor:bj r=q=2}, the solution satisfies the interface conditions, in particular the Beavers-Joseph conditions \eqref{bdry:BJ}, in the sense of traces. 
\end{remark}
	
	For arbitrary local solutions of the Navier Stokes-Darcy model there are two possibilities: either they exists globally or the blow up in finite time. 
	Blow ups in finite time are characterized by the following result.

	\begin{cor}[Finite-in-time-Blow up]\label{cor:bj blow up}
Assume that $\beta > 0$ is sufficient small and $(p_0,u_0) \in \rH^{\frac{1}{2}}_{00,\Gamma_1}(\Omega_p)\times \big( \rH^{\frac{1}{2}}_{00,\Gamma_2}(\Omega_f) \cap \rL^2_{\sigma}(\Omega_f) \big)$, $(f_p,f_f)\in \rL^r_{\mu}(0,T;L^q(\Omega_p))\times \rL^r_{\mu}(0,T;L^q_{\sigma}(\Omega_f))$ and let 
$(p,u)$ be the unique, local solution from \cref{cor:bj r=q=2} on the maximal time interval $[0,t_+)$.
		\begin{itemize}
			\item[(i)] 
			If $(p,u)$ blows up in finite time, i.e. $t_+ < \infty$, then
			\begin{equation*}
				\limsup_{t \to t_+} 
				\biggl( \| p(t) \|_{\rH^{\frac{1}{2}}_{00,\Gamma_1}} \hspace{-1em} + \| u(t) \|_{\rH^{\frac{1}{2}}_{00,\Gamma_2}} \biggr) 
				= + \infty .
			\end{equation*}
			\item[(ii)] 
			If $(p,u)$ blows up in finite time, i.e. $t_+ < \infty$, then 
			\begin{equation*}
				\limsup_{T \to t_+} \int_0^{T} 
				\biggl(\| p(s) \|_{\rH^{\frac{3}{2}}}^2 + \| u(s) \|_{\rH^{\frac{3}{2}}}^2 \mathrm{d} s \biggr) = + \infty .
			\end{equation*}
		\end{itemize} 
	\end{cor}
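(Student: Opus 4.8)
The plan is to deduce both statements from the abstract blow-up criteria recorded after \cref{thm:local}, specialised to the concrete $\rL^2$-realisation of the Beaver-Joseph system. Recall that the proof of \cref{cor:bj r=q=2} casts the problem as the semilinear equation \eqref{eq:acp-abstract} with state space $X_0 = \rL^2(\Omega_p) \times \rL^2_\sigma(\Omega_f)$, regularity space $X_1 = D(\mathbb{A})$, and bilinear term $F(u,u) = -\P(u \cdot \nabla u)$, which satisfies (H1) on $X_\theta \times X_\theta$ with interpolation exponent $\theta = \tfrac{5}{8}$ (the parameter called $\beta$ in Assumptions (A), not to be confused with the friction coefficient), so that the critical weight is $\mu_c = 2\theta - 1 + \tfrac{1}{r} = \tfrac{3}{4}$ for $r = 2$. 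Thus Assumptions (A) hold, $(p,u)$ is exactly the maximal solution furnished by \cref{thm:local}, and the two criteria apply once the abstract spaces $X_{\gamma,\mu_c}$ and $X_{\mu_c}$ are identified with concrete function spaces.

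For part (i), I would apply the first criterion $\limsup_{t \uparrow t_+} \|(p,u)(t)\|_{X_{\gamma,\mu_c}} = +\infty$. By definition $X_{\gamma,\mu_c} = (X_0, X_1)_{\mu_c - 1/r,\, r} = (X_0, D(\mathbb{A}))_{1/4,\,2}$. The key step is to identify this real interpolation space. Since $D(\mathbb{A})$ enforces the Dirichlet conditions on $\Gamma_1, \Gamma_2$ together with the first-order coupling conditions on $\Gamma$, and since the relevant regularity level $\rH^{1/2}$ lies exactly at the Dirichlet threshold $s = \tfrac{1}{q} = \tfrac{1}{2}$ while staying below the order $\tfrac{3}{2}$ needed to register normal traces, the coupling conditions on $\Gamma$ disappear and the $\Gamma_1,\Gamma_2$ conditions survive as Lions-Magenes conditions. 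By the classical identification of Lions-Magenes this gives
\begin{equation*}
	X_{\gamma,\mu_c} = \rH^{\frac{1}{2}}_{00,\Gamma_1}(\Omega_p) \times \big(\rH^{\frac{1}{2}}_{00,\Gamma_2}(\Omega_f) \cap \rL^2_\sigma(\Omega_f)\big),
\end{equation*}
which is precisely the critical initial-data space of \cref{cor:bj r=q=2}. Rewriting the abstract $\limsup$ in this norm yields statement (i).

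For part (ii), I would invoke the Serrin-type criterion \eqref{eq:serrin}, that is $\int_0^{t_+} \|(p,u)(t)\|_{X_{\mu_c}}^r\,\mathrm{d}t = +\infty$, now with $r = 2$. Here $X_{\mu_c} = [X_0, X_1]_{\mu_c} = [X_0, D(\mathbb{A})]_{3/4}$, and since complex interpolation of $\rL^2$ with the second-order domain at exponent $\tfrac{3}{4}$ yields a closed subspace of the Bessel potential space of order $2\cdot\tfrac{3}{4} = \tfrac{3}{2}$ carrying the induced norm, one obtains $\|(p,u)(t)\|_{X_{\mu_c}} \simeq \|p(t)\|_{\rH^{3/2}(\Omega_p)} + \|u(t)\|_{\rH^{3/2}(\Omega_f)}$ along the solution. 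Substituting this equivalence into \eqref{eq:serrin} gives statement (ii). This is the verbatim $\rL^2$-analogue of \cref{cor:bjs blow up}, to which I would defer for the portions of the argument that are identical.

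The main obstacle is the interpolation bookkeeping in part (i): one must check carefully that real interpolation of $X_0$ with the \emph{coupled} domain $D(\mathbb{A})$ reproduces the Lions-Magenes spaces with exactly the correct boundary-condition content --- that the coupling conditions on $\Gamma$ are invisible at regularity $\tfrac{1}{2}$ whereas the Dirichlet conditions on $\Gamma_1,\Gamma_2$ persist at the threshold $\tfrac{1}{2}$. Everything else is a direct transcription of the abstract criteria, the generation, analyticity and maximal $\rL^2$-regularity of $\mathbb{A}$ having already been established en route to \cref{cor:bj r=q=2}.
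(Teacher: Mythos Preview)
Your proposal is correct and follows essentially the same route as the paper, which gives no separate proof for this corollary and treats it implicitly as the $r=q=2$ specialisation of the argument for \cref{cor:bjs blow up}, with \cref{thm:opBJ} replacing \cref{prop:mr} (just as \cref{cor:bj r=q=2} is reduced to \cref{thm:main bjs critical}). For part~(i) you invoke the abstract trace-space criterion after \cref{thm:local} and identify $X_{\gamma,\mu_c}$ with the Lions--Magenes product space, exactly as the paper does via \eqref{eq:trace spaces} in the BJS case.

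For part~(ii) there is a small but genuine methodological difference. You cite the abstract criterion \eqref{eq:serrin} and then assert a norm equivalence $\|(p,u)\|_{X_{\mu_c}} \simeq \|p\|_{\rH^{3/2}} + \|u\|_{\rH^{3/2}}$ along the solution. Note that the one-sided embedding $X_{\mu_c} \hookrightarrow \rH^{3/2}$ from \cref{lem:Xbeta} points the wrong way for this deduction, so you really do need the full characterisation of $[X_0,D(\mathbb{A}_{\mathrm{BJ}})]_{3/4}$ --- precisely the obstacle you flag. The paper sidesteps this: in the proof of \cref{cor:bjs blow up} it does not cite \eqref{eq:serrin} but rather \emph{re-runs} its proof with the abstract mixed derivative embedding in condition~(S) replaced by the concrete one for plain Bessel potential spaces from \cite{MS12}, combined with \cref{lem:F}. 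This yields the Serrin criterion directly in $\rH^{2\mu,q}$-norms and never requires identifying $X_{\mu_c}$. Both routes reach the same conclusion; the paper's is cleaner exactly where yours needs the extra interpolation input.

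Minor remark: the operator throughout this corollary is $\mathbb{A}_{\mathrm{BJ}}$ from \eqref{def:ABJ}--\eqref{dom:ApfBJ}, not $\mathbb{A}$.
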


\section{The linearized system}
\label{sec:linear}

In this section, we analyze the linearized system subject to Beaver-Joseph-Saffman and Beaver-Joseph interface conditions. 
To this end, we make use of  extrapolation scales of Banach spaces, see e.g. the work of Amann \cite{Ama:95}, and perturbation theory of boundary operators via abstract Dirichlet operators.
We begin with the situation of pure Neumann boundary conditions.

\subsection{Laplacian and Stokes operator with Neumann boundary conditions} 
\label{ssec:Neumann}
\

Consider for $q \in (1,\infty)$ the Banach space 
\begin{equation*}
	{X}_0= \rL^q(\Omega_p)\times \rL^q_{\sigma}(\Omega_f),
\end{equation*}
and define the maximal Laplacian $\Delta_m:D(\Delta_m) \subset \rL^q(\Omega_p) \rightarrow \rL^q(\Omega_p)$ as
\begin{equation}\label{eq:Delta_m}
	\Delta_m p=\Delta p \quad \mbox{ with domain } \quad D(\Delta_m)= \rH^{2,q}_{\Gamma_1}(\Omega_p),
\end{equation}
and the maximal Stokes operator $A_m \colon D(A_m) \subset \rL^q_{\sigma}(\Omega_f) \to \rL^q_{\sigma}(\Omega_f)$ as 
\begin{equation}\label{eq:stokes}
	A_m u= \mathcal{P} \Delta u
	\quad \text{ with domain } \quad 
	D(A_m)=
	\rH^{2,q}_{\Gamma_2}(\Omega_f) \cap \rL^q_{\sigma}(\Omega_f) ,
\end{equation}
where $\mathcal{P}\colon {L}^q(\Omega_f) \to {L}^q_{\sigma}(\Omega_f)$ denotes the Helmholtz projection.
\textcolor{black}{With regard to the theory of boundary perturbations discussed in \autoref{sec:boundary perturbation}, we define the maximal operator 
\begin{equation*}
	\mb{A}_m := 
	\begin{pmatrix} 
		k \Delta_m & 0 \\ 0 & A_m
	\end{pmatrix} 
	\text{ with diagonal domain } D(\mb{A}_m) := D(\Delta_m) \times D(A_m) .
\end{equation*}
}

We denote by $\Delta_0 \colon D(\Delta_0)  \subset \rL^q(\Omega_p) \to \rL^q(\Omega_p)$ the Laplacian with mixed boundary conditions, i.e.
\begin{equation*}
	\Delta_0 p = \Delta p  \mbox{ with domain } \quad D(\Delta_0)= \{ p \in \rH^{2,q}(\Omega_p) \mid \partial_{n_{\Gamma}} p = 0 \text{ and } p|_{\Gamma_1} = 0 \} ,
\end{equation*}
and by $A_0 \colon D(A_0) \subset \rL^q_{\sigma}(\Omega_f) \to \rL^q_{\sigma}(\Omega_f)$ the Stokes operator with mixed boundary conditions, i.e.
\begin{equation*}
	A_0 u = \mathcal{P} \Delta u
	\quad \text{ with domain } \quad 
	D(A_0)=
	\{ u \in \rH^{2,q}(\Omega_f) \cap \rL^q_{\sigma}(\Omega_f) \mid \sigma n_{\Gamma} = 0 \text{ and } u|_{\Gamma_2} = 0 \}.
\end{equation*} 
Here $\Gamma_1$ and $\Gamma_2$ are defined as in Section 2. 

Furthermore, we define the operator  $\mb{A}_0 \colon {Z}_1 \subset {X}_0 \to {X}_0$ by
\begin{equation*}
	\mb{A}_0 := 
	\begin{pmatrix}
		k \Delta_0 & 0 \\
		0 & A_0
	\end{pmatrix}
\end{equation*}
with diagonal domain
\begin{equation*}
	{Z}_1 := D(\Delta_0) \times D(A_0).
\end{equation*}

{\color{black}
\begin{remark}\label{pressure}
Note that    the pressure $\pi$ in \eqref{eq:ns-darcyu} can be represented as $\nabla \pi = (I-\mathcal{P})\Delta u$. We hence obtain  $\operatorname{div}\sigma= \mathcal{P} \Delta u= A_m u$.
\end{remark}
}
We now collect properties of $\mb{A}_0$ and the associated real and complex interpolation spaces.

\begin{lemma}\label{lem:A_0}
\begin{enumerate}[(a)]
\item The operator  $\mb{A}_0$ admits $\rL^r$-maximal regularity on ${X}_0$ for all $r \in (1,\infty)$.
\item The complex interpolation spaces
		${Z}_{\theta} = [{Z}_1,{X}_0]_{\theta}$ are given by
\begin{equation*}
			{Z}_{\theta} = \rH^{2\theta,q}_{\Gamma_1}(\Omega_p) \times \bigl( \rH^{2\theta,q}_{\Gamma_2}(\Omega_f) \cap \rL^q_{\sigma}(\Omega_f) \bigr) 
			\qquad \text{ for } 0 < \theta < \frac{1}{2}\Big(1+\frac{1}{q}\Big).
		\end{equation*}
\item The real interpolation spaces ${Z}_{\theta,s} = ({Z}_1,{X}_0)_{\theta,s}$ 
		are given by
		\begin{equation*}
			{Z}_{\theta,r} = 
			\rB^{2\theta}_{qr\Gamma_1}(\Omega_p) \times \bigl( \rB^{2\theta}_{qr,\Gamma_2}(\Omega_f) \cap \rL^q_{\sigma}(\Omega_f) \bigr) 
			\qquad \text{ for } 0 < \theta < \frac{1}{2}\Big(1+\frac{1}{q}\Big)
			\text{ and } r \in (1,\infty) .			
		\end{equation*}
	\end{enumerate}
\end{lemma}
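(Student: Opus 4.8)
The plan is to prove the three assertions of \cref{lem:A_0} by reducing everything to the already-understood scalar building blocks, namely the Neumann Laplacian $\Delta_0$ on $\rL^q(\Omega_p)$ and the Stokes operator $A_0$ with perfect-slip (i.e. $\sigma n_\Gamma = 0$, $u|_{\Gamma_2}=0$) boundary conditions on $\rL^q_\sigma(\Omega_f)$. Since $\mb{A}_0$ is \emph{block-diagonal} with a diagonal domain ${Z}_1 = D(\Delta_0)\times D(A_0)$, each property passes componentwise: a diagonal operator is $\mathcal{R}$-sectorial (resp. has maximal $\rL^r$-regularity) if and only if both diagonal entries are, and the interpolation functor applied to a product space is the product of the functors applied to the factors. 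So the whole lemma follows once the two scalar operators are controlled and their interpolation scales are identified.

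For part (a), first I would recall that $k\Delta_0$ is the Laplacian with mixed Dirichlet/Neumann boundary data on the smooth domain $\Omega_p$; by the classical theory (e.g.\ Denis--Hieber--Pr\"uss type results for elliptic operators under Lopatinskii--Shapiro boundary conditions) it is $\mathcal{R}$-sectorial on $\rL^q(\Omega_p)$, hence has maximal $\rL^r$-regularity for all $r\in(1,\infty)$. Likewise the Stokes operator $A_0$ with the mixed slip/Dirichlet condition is known to be $\mathcal{R}$-sectorial on $\rL^q_\sigma(\Omega_f)$ and to admit a bounded $\rH^\infty$-calculus, whence maximal $\rL^r$-regularity. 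Since $\rL^q(\Omega_p)$ and $\rL^q_\sigma(\Omega_f)$ are UMD spaces, the direct sum $\mb{A}_0$ inherits maximal $\rL^r$-regularity on ${X}_0$ by the diagonal structure; concretely, $(\lambda-\mb{A}_0)^{-1} = \mathrm{diag}\big((\lambda - k\Delta_0)^{-1},(\lambda-A_0)^{-1}\big)$ and $\mathcal{R}$-bounds of a diagonal family are the maximum of the $\mathcal{R}$-bounds of the entries.

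For parts (b) and (c), the key is that complex and real interpolation commute with finite products, so it suffices to identify $[D(\Delta_0),\rL^q(\Omega_p)]_\theta$ and the corresponding real-interpolation scale, and similarly for $A_0$. Here the content is that the domain $D(\Delta_0)$ encodes the boundary condition $\partial_{n_\Gamma}p=0$ on $\Gamma$ together with $p|_{\Gamma_1}=0$; interpolating against $\rL^q$ recovers $\rH^{2\theta,q}_{\Gamma_1}(\Omega_p)$ provided $2\theta$ stays below the threshold $1+\tfrac1q$ at which the Neumann trace on $\Gamma$ would become active — this is exactly the restriction $0<\theta<\tfrac12(1+\tfrac1q)$, chosen so that only the Dirichlet condition on $\Gamma_1$ survives in the interpolated scale, as the spaces $\rH^{s,q}_{\Gamma_1}$ are defined in \cref{sec:prelim}. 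The analogous statement for $A_0$ gives $\rH^{2\theta,q}_{\Gamma_2}(\Omega_f)\cap \rL^q_\sigma(\Omega_f)$, using that the Helmholtz projection is bounded and that intersecting with the (complemented) solenoidal subspace commutes with interpolation. The real-interpolation identification in (c) then follows by replacing Bessel potential spaces with Besov spaces, i.e.\ $(D(\Delta_0),\rL^q)_{\theta,r} = \rB^{2\theta}_{q,r,\Gamma_1}(\Omega_p)$, by the standard characterization of real-interpolation spaces of domains of sectorial operators with bounded imaginary powers (or a bounded $\rH^\infty$-calculus).

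The main obstacle I anticipate is the sharp identification of the interpolation spaces near the critical exponent $2\theta = 1/q$, where the definition of $\rH^{s,q}_{\Gamma_1}$ (and $\rB^{s}_{q,r,\Gamma_1}$) switches between the three regimes laid out in \cref{sec:prelim}; one must check that the mixed boundary condition built into $D(\Delta_0)$ interpolates to exactly the right trace condition and that no spurious Neumann constraint appears below the threshold $\tfrac12(1+\tfrac1q)$. This is where the standard references on interpolation of spaces with boundary conditions (Seeley, Grisvard, and the Lions--Magenes description of the endpoint $s=1/q$) must be invoked carefully. Once that bookkeeping is done, the solenoidal intersection and the block-diagonal assembly are routine.
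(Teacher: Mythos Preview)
Your proposal is correct and follows essentially the same route as the paper: reduce to the diagonal components $k\Delta_0$ and $A_0$, invoke known maximal regularity results for each (the paper cites \cite{DHP:03} for the Laplacian and \cite{Pru:18} for the Stokes operator with mixed boundary conditions), and for (b)--(c) use that the restriction $\theta<\tfrac12(1+\tfrac1q)$ keeps the Neumann trace inactive so that only the Dirichlet condition on $\Gamma_1$ (resp.\ $\Gamma_2$) survives in the interpolation scale, with the critical exponent $2\theta=1/q$ handled via Seeley/Lions--Magenes and the solenoidal intersection via Giga's characterization. The paper's proof is slightly more reference-driven (citing \cite{See:72}, \cite{Gig:85}, \cite{Ama:00} directly) where you sketch the underlying mechanism, but the argument is the same.
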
	
\begin{proof}
(a) The maximal $\rL^r$-regularity of the Stokes operator $A_0$ with mixed boundary conditions is proved in \cite[Theorem 1.1]{Pru:18}. The analogous result for Laplacian can be found e.g. in \cite{DHP:03}. 

(b) The characterization of the complex interpolation spaces associated with the Stokes operator $A_0$ with mixed boundary conditions follows from \cite[Theorem 1.1]{Pru:18}, if $\theta \not = \frac{1}{2q}$. 
For the critical Dirichlet exponent $\theta = \frac{1}{2q}$ it follows from the proof of \cite[Theorem 1.1]{Pru:18} that it coincides with the critical interpolation space only taking into account the Dirichlet boundary conditions at $\Gamma_2$. In the work of Giga \cite{Gig:85} it is shown that this space is the intersection between the critical complex interpolation space for the Dirichlet Laplacian and the solenoidal vector-fields $\rL^q_{\sigma}(\Omega_f)$. The critical interpolation space for the Dirichlet Laplacian is characterized in \cite{See:72}. 
		For the complex interpolation spaces associated with the Laplacian we see again that it is reduced to the complex interpolation spaces of the Laplacian only taking into account the Dirichlet boundary condition at $\Gamma_1$, since we only consider $\theta < \frac{1}{2} \cdot \bigl( 1 + \frac{1}{q} \bigr)$. These spaces can be found in \cite{See:72}.  

(c) As in (b) we only have to take into account the Dirichlet boundary conditions on $\Gamma_1$ and $\Gamma_2$, respectively, since $\theta < \frac{1}{2} \cdot \bigl( 1 + \frac{1}{q} \bigr)$. Then 
the characterization of these spaces can be found in \cite{Ama:00}. 
\qedhere 
\end{proof}

Next, we consider the \textcolor{black}{harmonic extension problem with inhomogenenous Neumann data}
\begin{equation}
	\left\{ 
	\begin{aligned}
		&k \Delta_m p = \textcolor{black}{0}, \\
		\partial_{n_{\Gamma}} p = &\varphi, \qquad 
		p|_{\Gamma_1} = 0 
	\end{aligned}
	\right.
	\label{eq:laplacian neumann problem}
\end{equation}	
for $\varphi \in \mathrm{W}^{1-\frac{1}{q},q}(\Gamma)$, as well as the \textcolor{black}{stationary Stokes problem }
with inhomogeneous Neumann data 
\begin{equation}
	\left\{ 
	\begin{aligned}
		&A_m u = \textcolor{black}{0}, \\
		\sigma n_{\Gamma} = &\psi, \qquad 
		u|_{\Gamma_2} = 0 
	\end{aligned}
	\right.
	\label{eq:stokes neumann problem}
\end{equation}	
for $\psi \in \mathrm{W}^{1-\frac{1}{q},q}(\Gamma)$. We then  obtain the following assertions on the solvablity of problems \eqref{eq:laplacian neumann problem} and 
\eqref{eq:stokes neumann problem}.

{\color{black}The problem \eqref{eq:laplacian neumann problem} admits a unique solution $p\in \rH^{2,q}(\Omega_p)$ for all $\varphi \in \rH^{1-\frac{1}{q}}(\Gamma)$ by \cite[Corollary 7.4.5]{PS16}}. We denote 
the solution operator 
\begin{equation*} 
\varphi \mapsto p \mbox{ by } \rL_0^\Delta \varphi := p
\end{equation*} 
and note that $\rL_0^\Delta \in \mathcal{L}(\mathrm{W}^{1-\frac{1}{q},q}(\Gamma),\rH^{2,q}(\Omega_p))$.

By similar arguments, the problem \eqref{eq:laplacian neumann problem} admits a unique solution $u \in \rH^{2,q}(\Omega_f)\cap \rL^q_{\sigma}(\Omega_f)$ for 
all $\psi \in \rH^{1-\frac{1}{q}}(\Gamma)$. We denote the associated solution operator 
\begin{equation*} 
\psi \mapsto u \mbox{ by } \rL_0^A \psi := u 
\end{equation*} 
and note that $\rL_0^A \in \mathcal{L}(\mathrm{W}^{1-\frac{1}{q},q}(\Gamma),\rH^{2,q}(\Omega_f)\cap \rL^q_{\sigma}(\Omega_f))$.
 
The operator 
\begin{equation*} 
\rL_0 (\varphi,\psi)^{\top} := (\rL_0^\Delta \varphi,\rL_0^A \psi)^{\top}
\end{equation*}  
is bounded from $\mathrm{W}^{1-\frac{1}{q}}(\Gamma) \times \mathrm{W}^{1-\frac{1}{q}}(\Gamma)$ to $\rH^{2,q}(\Omega_p) \times (\rH^{2,q}(\Omega_f)~\cap~\rL^q_{\sigma}(\Omega_f))$. This follows by combining 
the two assertions above. 


\subsection{The linearized system subject to Beaver-Joseph-Saffman boundary conditions}
\

The left-hand side of the conditions \eqref{eq:ns-darcyint}$_2$ and \eqref{eq:ns-darcyint}$_3$ describe the normal part and the tangential part of $\sigma n_\Gamma$. They are equivalent to 
\begin{equation*}
	\sigma n_{\Gamma} = -pn_{\Gamma} - \beta\sum_{\ell=1}^{2}(u\cdot t^{\ell}_{\Gamma})t^{\ell}_{\Gamma} .
\end{equation*}
This motivates the definition of the operator $\mb{A}:{X}_1 \subset {X}_0\rightarrow {X}_0$ by
\begin{equation}\label{def:opmatrix}
	\mb{A}=\begin{pmatrix}
		k \Delta_m & 0 \\  0 & A_m
	\end{pmatrix}
\end{equation}
with non-diagonal domain
\begin{equation}\label{dom:opmatrix}
	{X}_1=\left\{\binom{p}{u}\in D(\Delta_m)\times D(A_m)\left|
	\begin{matrix} 
		\sigma n_{\Gamma} = -pn_{\Gamma} - \beta\sum_{\ell=1}^{2}(u\cdot t^{\ell}_{\Gamma})t^{\ell}_{\Gamma}, \\
		u\cdot n_{\Gamma}=-k\nabla p\cdot n_{\Gamma}, \qquad  
		p|_{\Gamma_1} = 0, \qquad 
		u|_{\Gamma_2} = 0 
	\end{matrix} \right. \right\} .
\end{equation}
We then obtain the following properties of $\mb{A}$ and its related interpolation spaces.

\begin{proposition}\label{prop:mr}
\begin{enumerate}[(a)]
\item 
The operator  $\mb{A}$ has compact resolvent on ${X}_0$; its spectrum is $q$-independent and consists only of eigenvalues $\lambda$ satisfying  $\mathrm{Re}(\lambda) < 0$. In particular, 
$\mb{A}$ is boundedly invertible. 	
\item 
The operator  $\mb{A}$ admits $\rL^r$-maximal regularity on ${X}_0$ for all $r \in (1,\infty)$.
\item 
For  $0 < \theta < \nicefrac{1}{2} \cdot \bigl(1+\nicefrac{1}{q}\bigr)$ and  $r \in (1,\infty)$ the real interpolation spaces ${X}_{\theta,r} = ({X}_0,{X}_1)_{\theta,r}$ are given by
\begin{equation*}
{X}_{\theta,r} = \rB^{2\theta}_{qr,\Gamma_1}(\Omega_p) \times \bigl( \rB^{2\theta}_{qr,\Gamma_2}(\Omega_f) \cap \rL^q_{\sigma}(\Omega_f) \bigr).  
		\end{equation*}
	\end{enumerate} 
\end{proposition}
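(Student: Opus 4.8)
The plan is to treat $\mb{A}$ as a boundary perturbation of the diagonal operator $\mb{A}_0$ from \cref{lem:A_0}, whose generation, maximal regularity and interpolation spaces are already known, and to transfer each property using the Dirichlet-operator representation together with \cref{prop:sw} and \cref{thm:boundary perturbation mr}. The guiding principle is that the interface relation $\rL v = \Phi v$ on $\Gamma$ is of Neumann/Robin type (order one), hence lower order relative to the interpolation level as long as one stays below the threshold $\tfrac12\bigl(1+\tfrac1q\bigr)$. Throughout I fix exponents $\tfrac12 < \gamma < \beta < \tfrac12\bigl(1+\tfrac1q\bigr)$, which is possible for every $q\in(1,\infty)$. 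By \cref{lem:A_0}(b)--(c) together with the reiteration theorem, $X_\gamma = Z_\gamma$ and $X_\beta = Z_\beta$ are then exactly the spaces carrying only the Dirichlet conditions at $\Gamma_1,\Gamma_2$; since $2\beta < 2$ we get $D(\mb{A}_m) = \rH^{2,q}_{\Gamma_1}(\Omega_p)\times(\rH^{2,q}_{\Gamma_2}(\Omega_f)\cap\rL^q_{\sigma}(\Omega_f)) \subset X_\beta$, and since $2\gamma>1$ we get $X_\gamma \hookrightarrow Z$.

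For (a) I would first obtain the analytic semigroup from \cref{prop:sw}: $\mb{A}_0$ is analytic by \cref{lem:A_0}(a), and with the chosen $\gamma,\beta$ one has $D(\mb{A}_m)\subset X_\beta \hookrightarrow F_\beta$ and $[D(\mb{A}_0-\omega)^\gamma]\hookrightarrow X_\gamma\hookrightarrow Z$, which are precisely its hypotheses; hence $\rho(\mb{A})\neq\emptyset$ and $\mb{A}$ is closed. As $D(\mb{A})\subset D(\mb{A}_m)\hookrightarrow\hookrightarrow X_0$ compactly (Rellich--Kondrachov), $\mb{A}$ has compact resolvent, so its spectrum is discrete and consists of eigenvalues of finite multiplicity. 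To locate them I would carry out an energy estimate for $q=2$: given $\mb{A}(p,u)^{\top}=\lambda(p,u)^{\top}$, testing $k\Delta p=\lambda p$ with $\bar p$ and $\mathcal{P}\Delta u=\lambda u$ with $\bar u$, integrating by parts and inserting \eqref{eq:ns-darcyint}, the two coupling terms on $\Gamma$ are complex conjugates and therefore drop out of the real part, leaving
\[
\mathrm{Re}(\lambda)\bigl(\|p\|_{\rL^2}^2+\|u\|_{\rL^2}^2\bigr) = -k\|\nabla p\|_{\rL^2}^2 - 2\mu\|D(u)\|_{\rL^2}^2 - \beta\sum_{\ell=1}^{2}\|u\cdot t^{\ell}_{\Gamma}\|_{\rL^2(\Gamma)}^2 \le 0 .
\]
Equality would force $\nabla p=0$ with $p|_{\Gamma_1}=0$, hence $p=0$, and $D(u)=0$ with $u|_{\Gamma_2}=0$ together with Korn's inequality, hence $u=0$; thus $\mathrm{Re}(\lambda)<0$ strictly. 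The passage to general $q$ is the usual consistency argument (consistent resolvents on $\rL^q\cap\rL^2$, eigenfunctions smooth by elliptic regularity), giving $q$-independence of the point spectrum and hence of the spectrum. In particular $0\in\rho(\mb{A})$, so $\mb{A}$ is boundedly invertible and $s(\mb{A})<0$.

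For (b) I would apply \cref{thm:boundary perturbation mr} directly: $\mb{A}_0$ is $\mc{R}$-sectorial of angle $<\pi/2$ by \cref{lem:A_0}(a), and the same $\gamma,\beta$ give $D(\mb{A}_m)\subset X_\beta$ and $X_\gamma\hookrightarrow Z$. The proposition then yields that $\mb{A}+\omega$ is $\mc{R}$-sectorial of angle $<\pi/2$ for every $\omega>s(\mb{A})$; since $s(\mb{A})<0$ by (a), I take $\omega=0$, so $\mb{A}$ itself is $\mc{R}$-sectorial of angle $<\pi/2$ and boundedly invertible, which is equivalent to maximal $\rL^r$-regularity on $\R_+$ for all $r\in(1,\infty)$.

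For (c) the goal is the identity $X_{\theta,r}=(X_0,D(\mb{A}))_{\theta,r}=(X_0,D(\mb{A}_0))_{\theta,r}=Z_{\theta,r}$ for $0<\theta<\tfrac12\bigl(1+\tfrac1q\bigr)$, after which \cref{lem:A_0}(c) gives the asserted Besov spaces. Both $D(\mb{A})$ and $D(\mb{A}_0)$ carry the $D(\mb{A}_m)$-graph norm by the elliptic a priori estimates for their respective interface problems, and they differ only through the order-one relation $\rL v=\Phi v$ on $\Gamma$. I would make the reduction precise through the representation $\mb{A}=\mb{A}_{-1}+(\lambda-\mb{A}_{-1})\rL_\lambda\Phi$: the correction $(\lambda-\mb{A}_{-1})\rL_\lambda\Phi$ maps $X_\delta\to X_{\beta-1}$ with $\delta<\tfrac12\bigl(1+\tfrac1q\bigr)$, so the resolvent difference $R(\lambda,\mb{A})-R(\lambda,\mb{A}_0)$ gains enough smoothing that the two interpolation scales coincide below this threshold; equivalently, since $2\theta<1+\tfrac1q$ the normal-trace constraints $\partial_{n_{\Gamma}}p$ and $\sigma n_{\Gamma}$ carry no meaning in $X_{\theta,r}$, so only the Dirichlet conditions at $\Gamma_1,\Gamma_2$ survive. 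I expect (c) to be the main obstacle: unlike the scalar Neumann problems behind \cref{lem:A_0}, the interface conditions couple $p$ and $u$ on $\Gamma$ through a non-diagonal domain, so the delicate point is to verify that this coupling remains of lower order relative to the interpolation level and hence leaves the interpolation space unchanged; the energy estimate in (a) and the perturbation arguments in (a)--(b) are by comparison routine once the embeddings $D(\mb{A}_m)\subset X_\beta$ and $X_\gamma\hookrightarrow Z$ are established.
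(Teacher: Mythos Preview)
Your proposal is correct and follows essentially the same route as the paper. In both cases one writes $\mb{A}$ as a lower-order boundary perturbation of the decoupled operator $\mb{A}_0$, uses the energy identity at $q=2$ for the spectral location in (a), invokes \cref{thm:boundary perturbation mr} together with $s(\mb{A})<0$ for (b), and for (c) argues that the perturbation is bounded from $Z_{1/2}$ into $Z_{\beta-1}$ with $\beta<\tfrac12(1+\tfrac1q)$ so that the real interpolation scales of $\mb{A}$ and $\mb{A}_0$ coincide below this threshold.

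Two small remarks. First, where you verify the abstract hypotheses $D(\mb{A}_m)\subset X_\beta$ and $X_\gamma\hookrightarrow Z$ directly, the paper instead makes the perturbation explicit by introducing $Q_0:=\rL_0\Phi$ and $Q:=-\mb{A}_{-1}Q_0$ and proving $Q_0\in\mathcal{L}(Z_{1/2},Z_\gamma)$, $Q\in\mathcal{L}(Z_{1/2},Z_{\gamma-1})$ (so your mapping ``$X_\delta\to X_{\beta-1}$'' should really read $Z_{1/2}\to Z_{\gamma-1}$; the domain index is fixed at $1/2$, coming from the trace theorem for $\Phi$). Second, for (c) your heuristic about the resolvent difference gaining smoothing is exactly the right mechanism, but it is not self-contained; the paper closes this gap by invoking a perturbation-of-interpolation-spaces result (Haak--Haase--Kunstmann, \cite[Section~4]{HHK:04}), which from $Q\in\mathcal{L}(Z_{1/2},Z_{\beta-1})$ yields $X_{\theta,r}=Z_{\theta,r}$ for $\theta<\beta$. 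You correctly anticipated that (c) is the step requiring an external input beyond \cref{prop:sw} and \cref{thm:boundary perturbation mr}.
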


The following corollary summarizes the properties of the semigroup.

\begin{cor}\label{cor:sg}
The operator $\mb{A}$ generates an exponentially stable, compact, analytic $C_0$-semigroup of angle $\frac{\pi}{2}$ on $X_0$, the Beaver-Joseph-Saffman semigroup.
\end{cor}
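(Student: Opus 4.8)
The plan is to read off the corollary from \cref{prop:mr} combined with the boundary-perturbation framework of \cref{sec:boundary perturbation}: generation together with the \emph{full} analyticity angle $\tfrac{\pi}{2}$ will come from the Staffans--Wei{\ss} result \cref{prop:sw}, while compactness and exponential stability follow from the spectral information in \cref{prop:mr}(a).

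First I would prove analyticity of angle $\tfrac{\pi}{2}$. The diagonal operator $\mb{A}_0 = \mathrm{diag}(k\Delta_0, A_0)$ generates an analytic $C_0$-semigroup of angle $\tfrac{\pi}{2}$ on $X_0$, since the Laplacian $\Delta_0$ with mixed Dirichlet--Neumann conditions on $\rL^q(\Omega_p)$ and the Stokes operator $A_0$ on $\rL^q_\sigma(\Omega_f)$ each generate bounded analytic semigroups of angle $\tfrac{\pi}{2}$ (equivalently, admit a bounded $H^\infty$-calculus of angle $0$), and the diagonal structure transfers this to the product space. I would then apply \cref{prop:sw} to the representation $\mb{A} = \mb{A}_{-1} + (\lambda - \mb{A}_{-1}) L_\lambda \Phi$ recorded in \cref{sec:boundary perturbation}, with $Z = \rH^{1,q}_{\Gamma_1}(\Omega_p) \times (\rH^{1,q}_{\Gamma_2}(\Omega_f) \cap \rL^q_\sigma(\Omega_f))$. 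Its hypotheses demand $0 \le \gamma < \beta \le 1$ with $[D(\mb{A}_0-\omega)^\gamma] \hookrightarrow Z$ and $D(\mb{A}_m) \subset F_\beta$. By \cref{lem:A_0}(b) (together with bounded imaginary powers, which identify the fractional domain with the complex interpolation space) one has $[D(\mb{A}_0-\omega)^{1/2}] = Z_{1/2} = Z$, so $\gamma = \tfrac12$ is admissible; and since $F_\beta = (X_0,Z_1)_{\beta,\infty}$ is, for $\beta < \tfrac12(1+\tfrac1q)$, characterized by the Besov spaces of \cref{lem:A_0}(c) which encode only the Dirichlet conditions at $\Gamma_1,\Gamma_2$, the inclusion $D(\mb{A}_m) = \rH^{2,q}_{\Gamma_1}(\Omega_p) \times (\rH^{2,q}_{\Gamma_2}(\Omega_f) \cap \rL^q_\sigma(\Omega_f)) \subset F_\beta$ holds for every $\beta$ with $\tfrac12 < \beta < \tfrac12 + \tfrac{1}{2q}$. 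As \cref{prop:sw} preserves the angle, $\mb{A}$ generates an analytic semigroup of angle $\tfrac{\pi}{2}$.

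Compactness of the semigroup is then immediate: by \cref{prop:mr}(a) the operator $\mb{A}$ has compact resolvent, and a generator of an analytic semigroup with compact resolvent yields a semigroup that is (norm continuous and) compact for every $t > 0$, see \cite{EN00}. For exponential stability I would argue that the compact resolvent makes $\sigma(\mb{A})$ a discrete set of eigenvalues accumulating only at infinity, while sectoriality confines them to a sector opening to the left; hence only finitely many lie in any half-plane $\{\mathrm{Re}\,\lambda \ge -\varepsilon\}$. Together with $\mathrm{Re}(\lambda)<0$ for all eigenvalues and the bounded invertibility of $\mb{A}$ from \cref{prop:mr}(a) (so that $0 \notin \sigma(\mb{A})$), this gives $s(\mb{A}) = \sup\{\mathrm{Re}\,\lambda : \lambda \in \sigma(\mb{A})\} < 0$. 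Since for analytic semigroups the growth bound equals the spectral bound, $\omega_0(\mb{A}) = s(\mb{A}) < 0$, and exponential decay follows.

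The main obstacle is obtaining the angle \emph{exactly} $\tfrac{\pi}{2}$: maximal $\rL^r$-regularity from \cref{prop:mr}(b) by itself only gives $\mathcal{R}$-sectoriality of some angle strictly below $\tfrac{\pi}{2}$, hence analyticity of a possibly smaller angle. The sharp value hinges on the angle-preserving nature of \cref{prop:sw} and thus on checking that the perturbation $\Phi$ acts out of $Z = Z_{1/2}$, which is exactly where the interpolation identities of \cref{lem:A_0} are used and where the admissible range $\tfrac12 < \beta < \tfrac12 + \tfrac{1}{2q}$ must be respected. A secondary subtlety is the strict inequality $s(\mb{A}) < 0$ (rather than $\le 0$), for which the bounded invertibility in \cref{prop:mr}(a) is precisely what rules out a spectral value on the imaginary axis.
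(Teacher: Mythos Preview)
Your proposal is correct and matches the paper's own argument: generation and compactness are read off from \cref{prop:mr}(a)--(b), exponential stability from $s(\mb{A})<0$, and the sharp angle $\tfrac{\pi}{2}$ from the angle-preserving Staffans--Wei{\ss} perturbation \cref{prop:sw} applied to $\mb{A}_0$ (the paper phrases the verification of the hypotheses via \cref{lem:Q bounded} and \cref{lem:representation A}, which amounts to exactly the parameter choice $\gamma=\tfrac12<\beta<\tfrac12+\tfrac{1}{2q}$ you spell out). One small cosmetic point: \cref{lem:A_0}(c) is stated only for $r\in(1,\infty)$, so to get $D(\mb{A}_m)\subset F_\beta$ it is cleaner to use $D(\mb{A}_m)\hookrightarrow Z_\beta\hookrightarrow F_\beta$ via the standard embedding of complex into real interpolation spaces rather than invoking the Besov characterization at $r=\infty$.
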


The proof of \cref{prop:mr} is subdivided into several steps. We start with part(a).

\begin{proof}[Proof of \cref{prop:mr}(a).]
By compactness of the Sobolev embeddings we obtain
\begin{equation*}
	{X}_1 \hookrightarrow D(\Delta_m) \times D(A_m) = \rH^{2,q}(\Omega_p) \times 
	(\rH^{2,q}(\Omega_f) \cap \rL^q_{\sigma}(\Omega_f))
	\stackrel{c}{\hookrightarrow}
	\rL^q(\Omega_p) \times \rL^q_{\sigma}(\Omega_f)
\end{equation*}
and hence compactness of the resolvent of $\mb{A}$. This implies $\sigma(\mb{A}) = \sigma_p(\mb{A})$, and hence $q$-independence of the spectrum.	
Therefore, in order to prove $L^q$-spectral properties of $\mb{A}$ it suffices to determine the eigenvalues for the particular case $q = 2$. 
We consider the equations
\begin{equation*}
	\left\{
	\begin{aligned}
		\lambda p &= k\Delta_m p &&\text{ on } \Omega_p, \\
		\lambda u &= A_m u &&\text{ on } \Omega_f, 
	\end{aligned}
	\right. 
\end{equation*}
subject to the interface conditions \eqref{eq:ns-darcyint} and the boundary conditions $p|_{\Gamma_1} = 0$, $u|_{\Gamma_2} = 0$. {\color{black} Integration by parts, the interface conditions and  Remark \ref{pressure} yield
\begin{align*}
	2 \mathrm{Re} \lambda \cdot (\| p \|_{\rL^2}^2 + \| u \|_{\rL^2}^2)
	&= \int_{\Omega_p} \lambda p \bar{p} + \int_{\Omega_p} p \bar{\lambda} \bar{p} + \int_{\Omega_f} \lambda u\cdot \bar{u} + \int_{\Omega_f} u \cdot\bar{\lambda} \bar{u} \\
	&= k \int_{\Omega_p} \Delta_m p \bar{p} + k \int_{\Omega_p} p \Delta_m \bar{p} + \int_{\Omega_f} A_m u \cdot \bar{u} + \int_{\Omega_f} u \cdot A_m \bar{u} \\ 
	&= -2k \| \nabla p \|_{\rL^2}^2 - 2\mu \| D(u) \|_{\rL^2}^2 + 2\mathrm{Re}\left[- k\int\limits_{\Gamma}(\nabla p\cdot {n}_{\Gamma})p + \int\limits_{\Gamma}\sigma {n}_{\Gamma}\cdot {u}\right]\\
	&= -2k \| \nabla p \|_{\rL^2}^2 - 2\mu \| D(u) \|_{\rL^2}^2 
	- 2 \beta \sum_{\ell = 1}^2 \int_\Gamma |u \cdot t_\Gamma^\ell|^2
	\leq 0 .
\end{align*}
Here, we have used the sign of the normal vector, i.e.,  
	\begin{equation*}
		{n}_{\Gamma}={n}_f \mid_{\Gamma}=-{n}_p\mid_{\Gamma}.
	\end{equation*}
Hence, $\mathrm{Re}\lambda \leq 0$. Finally, for the case $\mathrm{Re}\lambda = 0$ we obtain that $p$ and $u$ are constant. Now the boundary conditions imply $u = p = 0$ and hence $i\R \subseteq \rho(\mb{A})$.}
\end{proof} 

The main idea of the proof of the remaining parts is to deal with the boundary terms by a perturbation argument and decouple the operator matrix with this. To this end,  we introduce the 
boundary perturbation operator 
\begin{equation*}
	{\Phi}
	\binom{p}{u} := \binom{- \frac{1}{k} u \cdot n_{\Gamma}}{-p n_{\Gamma} - \beta\sum_{\ell=1}^{2}(u\cdot t^{\ell}_{\Gamma})t^{\ell}_{\Gamma}} 
\end{equation*}
\textcolor{black}{and the Neumann boundary operator 
\begin{equation*} 	
	\rL \binom{p}{u} := \binom{\partial_{n_{\Gamma}} p}{\sigma n_{\Gamma}}.
\end{equation*} 
Note that $\rL$ represents the leading order terms of the boundary conditions \eqref{bdry:BJ}, whereas $\Phi$ collects the lower order terms. Hence, the operator $\mathbb{A}$ given in 
\eqref{def:opmatrix}-\eqref{dom:opmatrix} can be rewritten as
\begin{equation*}
	\mathbb{A} \binom{p}{u}  = \mathbb{A}_m \binom{p}{u} , \qquad 
	X_1 = \left\{ \binom{p}{u} \in D(\mathbb{A}_m) \ \big| \ \rL \binom{p}{u} = \Phi \binom{p}{u} \right\} .
\end{equation*}
}

Furthermore, we define the operators
\begin{align*}
	{Q}_0 := \rL_0 {\Phi} \qquad { \mbox{and} } \qquad {Q} := -\mb{A}_{-1}{Q}_0 ,
\end{align*}
where $\mb{A}_{-1} := (\mb{A}_0)_{-1}$ in the sense of \cref{sec:prelim} and obtain 

\begin{lemma}\label{lem:Q bounded}
If  $\gamma < \frac{1}{2} \cdot \big( 1 + \frac{1}{q} \big)$, then ${Q}_0 \in \mathcal{L}({Z}_{\frac{1}{2}},{Z}_{\gamma})$. Hence, ${Q} \in \mathcal{L}({Z}_{\frac{1}{2}},{Z}_{\gamma-1})$ for all 
$\gamma <\frac{1}{2} \cdot \big( 1 + \frac{1}{q} \big)$.
\end{lemma}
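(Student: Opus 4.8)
The plan is to factor $Q_0 = \rL_0 \Phi$ through the boundary space $\partial X = \rW^{1-\frac{1}{q},q}(\Gamma) \times \rW^{1-\frac{1}{q},q}(\Gamma)$ and to track the regularity across each factor; the essential point will be to locate the range of the extension operator $\rL_0$ inside the interpolation scale $(Z_\theta)$ furnished by \cref{lem:A_0}.

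First I would record that, by \cref{lem:A_0}(b), $Z_{1/2} = \rH^{1,q}_{\Gamma_1}(\Omega_p) \times \bigl(\rH^{1,q}_{\Gamma_2}(\Omega_f) \cap \rL^q_{\sigma}(\Omega_f)\bigr)$, which is exactly the space $Z$ of the abstract boundary-perturbation framework. Since $\Phi$ is assembled from boundary traces of $p$ and $u$, and the trace map $\rH^{1,q} \to \rW^{1-\frac{1}{q},q}(\Gamma)$ is bounded, one obtains at once $\Phi \in \mathcal{L}(Z_{1/2},\partial X)$.

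Next, and this is the crux, I would examine $\rL_0$. We already know $\rL_0 \in \mathcal{L}\bigl(\partial X, \rH^{2,q}(\Omega_p) \times (\rH^{2,q}(\Omega_f) \cap \rL^q_{\sigma}(\Omega_f))\bigr)$. Moreover $\rL_0^\Delta \varphi$ vanishes on $\Gamma_1$, while $\rL_0^A \psi$ is solenoidal and vanishes on $\Gamma_2$; hence the range of $\rL_0$ sits inside $\rH^{2,q}_{\Gamma_1}(\Omega_p) \times \bigl(\rH^{2,q}_{\Gamma_2}(\Omega_f) \cap \rL^q_{\sigma}(\Omega_f)\bigr)$. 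For any $\gamma < \frac{1}{2}\bigl(1+\frac{1}{q}\bigr)$ one has $2\gamma < 1 + \frac{1}{q} < 2$, so $\rH^{2,q}_{\Gamma_i} \hookrightarrow \rH^{2\gamma,q}_{\Gamma_i}$, and by \cref{lem:A_0}(b) the latter product is precisely $Z_\gamma$; therefore $\rL_0 \in \mathcal{L}(\partial X, Z_\gamma)$. The restriction $\gamma < \frac{1}{2}\bigl(1+\frac{1}{q}\bigr)$ is genuinely needed here: it is exactly the threshold $2\gamma = 1 + \frac{1}{q}$ above which the normal trace $\partial_{n_{\Gamma}}$ becomes continuous on $\rH^{2\gamma,q}$ and the homogeneous Neumann condition enters the description of $Z_\gamma$, a condition the extensions $\rL_0^\Delta \varphi$ (having $\partial_{n_{\Gamma}} p = \varphi \neq 0$ in general) do not satisfy.

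Composing the two bounded maps yields $Q_0 = \rL_0 \Phi \in \mathcal{L}(Z_{1/2}, Z_\gamma)$. Finally, since $(Z_\theta)$ is the extrapolation scale associated with $\mb{A}_0$ (following Amann~\cite{Ama:95}), the extrapolated operator $\mb{A}_{-1}$ acts as a bounded map $Z_\gamma \to Z_{\gamma-1}$; hence $Q = -\mb{A}_{-1} Q_0 \in \mathcal{L}(Z_{1/2}, Z_{\gamma-1})$ for every $\gamma < \frac{1}{2}\bigl(1+\frac{1}{q}\bigr)$, as claimed. The main obstacle is not the estimates themselves, which are routine trace bounds and embeddings, but correctly situating the range of $\rL_0$ within the scale: everything hinges on the sharp interpolation characterization of \cref{lem:A_0}(b) and on recognizing $\frac{1}{2}\bigl(1+\frac{1}{q}\bigr)$ as the precise regularity at which the Neumann boundary condition — violated by the harmonic and Stokes extensions — would otherwise obstruct membership in $Z_\gamma$.
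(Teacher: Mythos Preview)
Your proof is correct and follows essentially the same route as the paper: factor $Q_0 = \rL_0 \Phi$, bound $\Phi$ by the trace theorem, land $\rL_0$ in $\rH^{2,q}_{\Gamma_1}(\Omega_p) \times (\rH^{2,q}_{\Gamma_2}(\Omega_f) \cap \rL^q_{\sigma}(\Omega_f))$, and embed into $Z_\gamma$ via \cref{lem:A_0}(b). Your additional explanation of why the threshold $\gamma < \tfrac{1}{2}(1+\tfrac{1}{q})$ is sharp (the Neumann trace becoming continuous) and your explicit justification of the second claim via the extrapolation scale are more detailed than the paper's terse ``follows immediately,'' but the substance is identical.
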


\begin{proof}
The trace theorem implies that ${\Phi}$ is bounded from
${Z}_{\frac{1}{2}} \hookrightarrow \rH^{1,q}(\Omega_p) \times (\rH^{1,q}(\Omega_f) \cap \rL^q_{\sigma}(\Omega_f))$ to $\mathrm{W}^{1-\frac{1}{q},q}(\Gamma)\times \mathrm{W}^{1-\frac{1}{q},q}(\Gamma)$. 
Now the mapping properties of $\rL_0$ described above  imply that ${Q}_0$ is bounded from ${Z}_{\frac{1}{2}}$ to $\rH^{2,q}_{\Gamma_1}(\Omega_p) \times (\rH^{2,q}_{\Gamma_2}(\Omega_f) \cap \rL^q_{\sigma}(\Omega_f))$. 
\cref{lem:A_0}(b) yields 
	\begin{equation*} 
		\rH^{2,q}_{\Gamma_1}(\Omega_p) \times (\rH^{2,q}_{\Gamma_2}(\Omega_f)\cap~\rL^q_{\sigma}(\Omega_f))
		\hookrightarrow 
		{Z}_{\gamma}
	\end{equation*} 
for all $0 \leq \gamma < \frac{1}{2} \cdot \big( 1 + \frac{1}{q} \big)$. Thus the claim follows by the closed graph theorem. Finally, the second claim follows immediately from the first one.
\end{proof}

In the sequel, we  rewrite $\mb{A}$ using $\mb{A}_0$ and ${Q}$ in the following way. 

\begin{lemma}\label{lem:representation A}
The operator $\mb{A}$ can be written as \textcolor{black}{the $X_0$-representation of $\mb{A}_{-\frac{1}{2}} + {Q}$, i.e. 
	\begin{align*}
		\mb{A}v &= (\mb{A}_{-\frac{1}{2}} + {Q} )v , \\
		D(\mb{A}) &= \{ v \in Z_{\frac{1}{2}} \colon (\mb{A}_{-\frac{1}{2}} + {Q} )v \in X_0 \}
	\end{align*}
}
	where $\mb{A}_{-\frac{1}{2}} := (\mb{A}_0)_{-\frac{1}{2}}$.
\end{lemma}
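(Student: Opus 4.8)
The plan is to recognize the statement as the concrete incarnation of the abstract Greiner-type representation formula recalled in \cref{sec:boundary perturbation} (that is, \cite[Lemma 3.4]{ABE:16}), specialized to the spectral value $\lambda = 0$ and the scale parameter $\delta = \frac{1}{2}$. First I would assemble the ingredients that make the abstract machinery applicable. The homogeneous operator $\mb{A}_0$ generates an analytic $C_0$-semigroup on $X_0$, so the whole interpolation-extrapolation scale $Z_{-1} \hookleftarrow X_0 \hookleftarrow Z_1$, the extrapolated operator $\mb{A}_{-1}$ and its fractional realizations $\mb{A}_\beta$ are defined. Moreover $0 \in \rho(\mb{A}_0)$: any $v \in \ker(\mb{A}_0)$ is, by the integration-by-parts identity, constant and vanishes on $\Gamma_1$ resp. $\Gamma_2$, hence $v = 0$, and combined with the compact resolvent this gives invertibility at $0$. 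Consequently the Dirichlet operator $\rL_0 = (\rL|_{\ker \mb{A}_m})^{-1}$ is exactly the solution operator $\rL_0 = (\rL_0^\Delta,\rL_0^A)^\top$ built in \cref{ssec:Neumann}. The choice $\delta = \frac{1}{2}$ is forced by the trace theorem, which makes $\Phi$ (hence $\rL_0\Phi = Q_0$ and $Q = -\mb{A}_{-1}Q_0$) defined precisely on $Z_{\frac12}$; this is the content of \cref{lem:Q bounded}, and $D(\mb{A}_m) = Z_1 \hookrightarrow Z_{\frac12}$ guarantees that every $v \in D(\mb{A})$ is an admissible argument.

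The key algebraic step I would carry out explicitly is the following. Take $v \in D(\mb{A})$, so that $v \in D(\mb{A}_m)$ and $\rL v = \Phi v$. Since $\rL \rL_0 = \mathrm{id}$ on the boundary space, the element $v - \rL_0\Phi v$ satisfies $\rL(v - \rL_0\Phi v) = \Phi v - \Phi v = 0$, hence $v - \rL_0\Phi v \in \ker(\rL) = D(\mb{A}_0)$. Applying $\mb{A}_0 = \mb{A}_m|_{D(\mb{A}_0)}$ and using that $\rL_0\Phi v \in \ker(\mb{A}_m)$, I obtain
\begin{equation*}
	\mb{A}_0(v - \rL_0\Phi v) = \mb{A}_m v - \mb{A}_m \rL_0\Phi v = \mb{A}v - 0 = \mb{A}v .
\end{equation*}
Rewriting the left-hand side through the extrapolated operator, $\mb{A}_0(v - \rL_0\Phi v) = \mb{A}_{-1}v - \mb{A}_{-1}\rL_0\Phi v = \mb{A}_{-1}v + Qv$, and identifying $\mb{A}_{-1}v = \mb{A}_{-\frac{1}{2}}v$ for $v \in Z_{\frac12}$, I arrive at $\mb{A}v = (\mb{A}_{-\frac{1}{2}} + Q)v$. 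The inclusion $D(\mb{A}) \subseteq \{v \in Z_{\frac12} \colon (\mb{A}_{-\frac{1}{2}}+Q)v \in X_0\}$ is then immediate, and the reverse inclusion is exactly the domain characterization of \cite[Lemma 3.4]{ABE:16} with the data $\lambda = 0$, $\delta = \frac{1}{2}$ verified above.

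The subtle point — and the one I would be most careful about — is that $\mb{A}_{-1}$ is the extension of the homogeneous operator $\mb{A}_0$, not of the maximal operator $\mb{A}_m$. Although $\rL_0\Phi v \in \ker(\mb{A}_m)$, so that $\mb{A}_m \rL_0\Phi v = 0$, the element $\rL_0\Phi v$ carries nonzero boundary data and hence lies outside $D(\mb{A}_0)$, where $\mb{A}_{-1}$ and $\mb{A}_m$ genuinely disagree; this is precisely why $Qv = -\mb{A}_{-1}\rL_0\Phi v$ does not vanish. It is this mismatch that forces the computation into the extrapolation space and makes the boundedness supplied by \cref{lem:Q bounded} indispensable for $Q$ to be meaningful. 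The only remaining technical care concerns the realization identity $\mb{A}_{-1}|_{Z_{1/2}} = \mb{A}_{-\frac{1}{2}}$ and the reverse domain inclusion; for both I would invoke the abstract lemma rather than reprove it, since its hypotheses have been checked in the first paragraph.
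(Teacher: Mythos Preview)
Your proof is correct and follows essentially the same route as the paper: decompose $v = (v - \rL_0\Phi v) + \rL_0\Phi v$, use $v - \rL_0\Phi v \in \ker(\rL) = D(\mb{A}_0)$ to pass to the extrapolated operator, and identify $(\mb{A}_{-\frac12}+Q)v$ with $\mb{A}_m v = \mb{A}v$. The only difference is cosmetic: the paper writes out the reverse domain inclusion explicitly (still following \cite[Lemma~3.4]{ABE:16}), whereas you cite that lemma directly after verifying its hypotheses; your additional remarks on $0 \in \rho(\mb{A}_0)$ and on the distinction between $\mb{A}_{-1}$ and $\mb{A}_m$ on $\ker(\mb{A}_m)\setminus D(\mb{A}_0)$ are correct and clarifying.
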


\begin{proof}
	\textcolor{black}{We follow the strategy from \cite[Lemma 3.4]{ABE:16}.
We denote the operator on the right hand side above by $\tilde{\mb{A}}$. 
We consider $v \in D(\mathbb{A}) = X_1 = \left\{ \tbinom{p}{u} \in D(\mathbb{A}_m) \ \big| \ \rL \tbinom{p}{u} = \Phi \tbinom{p}{u} \right\}$. Using $\rL \rL_0 = \mathrm{Id}$ and $Q_0 = \rL_0 \Phi$ it follows 
\begin{equation*}
	(1-Q_0) v \in \ker(\mathrm{L}) = D(\mathbb{A}_0) = Z_1 .
\end{equation*}
This implies ${\mathbb{A}}_{-1}(1 - Q_0)v\in Z_0 = X_0$. 
Furthermore, using \cref{lem:A_0}(b) we obtain
$v \in D(\mathbb{A}) \subset D(\mathbb{A}_m)  \hookrightarrow {Z}_{\frac{1}{2}}$. Now, $\mathbb{A}_{-\frac{1}{2}}v = \mathbb{A}_{-1}v$ for $v \in Z_{\frac{1}{2}}$ and the definition of $Q = \mathbb{A}_{-1}Q_0$ imply
\begin{equation*}
	 {\mathbb{A}}_{-{\frac{1}{2}}}v - Qv
	= {\mathbb{A}}_{-1}v - {\mathbb{A}}_{-1}Q_0v
	= {\mathbb{A}}_{-1}(1 - Q_0)v\in X_0
\end{equation*}
and we conclude $D(\mathbb{A}) \subseteq D(\tilde{\mathbb{A}})$.
}

\textcolor{black}{
For the opposite direction let $v= (p,u)^{\top} \in D(\tilde{\mathbb{A}})$, i.e. $v \in {Z}_{\frac{1}{2}}$ with ${\mathbb{A}}_{-{\frac{1}{2}}}v - Qv\in{X}_0$.
Using $\mathbb{A}_{-\frac{1}{2}}v = \mathbb{A}_{-1}v$ for $v \in Z_{\frac{1}{2}}$ and $Q = \mathbb{A}_{-1}Q_0$ we obtain ${\mathbb{A}}_{-1}(1 - Q_0)v\in{X}_0=Z_0$. This implies
$(1 - Q_0)v\in Z_1 = \ker(\rL)$. Using
\begin{equation*} 
	v = (1 - {Q}_0)v + {Q}_0 {v} \in {Z}_1 \oplus \ker(\mb{A}_m) \subseteq D(\mb{A}_m),
\end{equation*}
we are in the position to apply the boundary operator $\rL$. Applying $\rL$ and using $\rL \rL_0 = \mathrm{Id}$ and $Q_0 = \rL_0 \Phi$ we conclude $\rL v = \Phi v$, which shows $D(\tilde{\mathbb{A}}) \subseteq D(\mathbb{A})$ and hence $D(\tilde{\mathbb{A}}) = D(\mathbb{A})$.
} 
%

\textcolor{black}{
	Furthermore, for ${v} \in D(\mb{A}) \subset Z_{\frac{1}{2}} \subset  D(\mb{A}_m)$ we obtain from $\mb{A}_{-\frac{1}{2}}v= \mb{A}_m v = \mb{A} v$ and 
$\mb{A}_m{Q}_0{v} = 0$ that
	\begin{align*}
		\tilde{\mb{A}}{v} = \mb{A}_m (1- {Q}_0){v} = \mb{A}_m {v} = \mb{A} v 
	\end{align*}
	and hence the claim.}
\end{proof}

\begin{proof}[Proof of {\cref{prop:mr}(b) and (c)}.]
(b): Using \cref{lem:Q bounded} and the representation of $\mb{A}$ as given in \cref{lem:representation A}, we conclude from 
\cref{thm:boundary perturbation mr} with  $\beta < \frac{1}{2} \cdot \big( 1 + \frac{1}{q} \big)$ that $\mb{A}+\lambda$ is $\mathcal{R}$-sectorial on ${X}_0$ for all $\lambda > s(\mb{A})$. 
From \cref{prop:mr}(a) we know that $s(\mb{A}) < 0$ and hence we may choose $\lambda = 0$. 
	
(c): From \cref{lem:Q bounded} we conclude that ${Q} \in \mathcal{L}({Z}_{\frac{1}{2}},{Z}_{\beta-1})$ for $\beta \in [\frac{1}{2},\frac{1}{2} \cdot \big( 1 + \frac{1}{q} \big))$. 
It follows from \cite[Section 4]{HHK:04} that  
	\begin{equation*}
		{X}_{\theta,r} = {Z}_{\theta,r} 
	\end{equation*} 
	for $\theta < \beta < \big( 1 + \frac{1}{q} \big)$ and $r \in (1,\infty)$. Finally, the claim follows from \cref{lem:A_0}(c).
\end{proof}

\begin{proof}[Proof of {\cref{cor:sg}}.]
The generation of an analytic $C_0$-semigroup follows immediately from the maximal regularity. Hence, the compactness of the resolvent implies compactness of the semigroup. Further, the exponential stability follows from $s(\mb{A}) < 0$. It remains to calculate the angle of analyticity. Note that $\mb{A}_0$ generates an analytic $C_0$-semigroup of angle $\frac{\pi}{2}$. Now the claim follows from \cref{prop:sw} 
	using \cref{lem:Q bounded} and \cref{lem:representation A} similar to the proof of \cref{prop:mr}(a). 
\end{proof}

\subsection{The linearized equation subject to Beaver-Joseph boundary conditions}
\

In the sequel we analyze the linearized problem subject to the  Beavers-Joseph interface condition \eqref{bdry:BJ} for a sufficiently small  coefficient $\beta > 0$. 

To this end, we consider the Hilbert space ${X}_0= \rL^2(\Omega_p)\times \rL^2_{\sigma}(\Omega_f)$:
Observe that the interface conditions \eqref{eq:ns-darcyint}$_{1-2}$ and \eqref{bdry:BJ} imply
\begin{equation*}
	\sigma {n}_{\Gamma}=({n}_{\Gamma}\cdot\sigma {n}_{\Gamma}){n}_{\Gamma}+\sum_{\ell=1}^{2}(t^{\ell}_{\Gamma}\cdot \sigma_f{n}_{\Gamma})t^{\ell}_{\Gamma} = -p{n}_{\Gamma} - \beta\sum_{\ell=1}^{2}(({u}-\nabla p)\cdot 
t^{\ell}_{\Gamma})t^{\ell}_{\Gamma}.
\end{equation*}
This motivates the definition of the Beaver-Joseph operator $\mb{A}_{BJ}:{X}_1 \subset {X}_0\rightarrow {X}_0$ as 
\begin{equation}\label{def:ABJ}
	\mb{A}_{BJ}=\begin{pmatrix}
		k \Delta_m & 0 \\  0 & A_m
	\end{pmatrix}
\end{equation}
with domain
\begin{equation}\label{dom:ApfBJ}
	{X}_1=D(\mb{A}_{BJ})=\left\{\binom{p}{u}\in D(\Delta_m)\times D(A_m)\left|
	\begin{matrix} 
		\sigma n_{\Gamma} = -pn_{\Gamma} - \beta\sum_{\ell=1}^{2}((u-\nabla p)\cdot t^{\ell}_{\Gamma})t^{\ell}_{\Gamma}, \\
		u\cdot n_{\Gamma}=-k\nabla p\cdot n_{\Gamma}, \quad 
		p|_{\Gamma_1} = 0, \quad 
		u|_{\Gamma_2} = 0.
	\end{matrix} \right. \right\},
\end{equation}
where $D(\Delta_m)= \rH^{2}(\Omega_p)$ and $D(A_m)= \rH^{2}(\Omega_f) \cap \rL^2_{\sigma}(\Omega_f)$.

Let us consider $\theta <\frac{\pi}{2}$. We then obtain the following lemma.

\begin{lemma}\label{lem:dissipative}
 Let $\theta <\frac{\pi}{2}$ and assume $\beta$ to be sufficiently small. Then the operators $(e^{i\vartheta}\mb{A}_{\mathrm{BJ}})$ are dissipative for all $\vartheta \in (-\theta,\theta)$.
\end{lemma}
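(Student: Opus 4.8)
The plan is to characterise dissipativity through the numerical range and to reduce the claim to an energy identity of the same type as in the proof of \cref{prop:mr}(a). Fix $v=(p,u)^\top\in D(\mb{A}_{\mathrm{BJ}})$. Integrating by parts in $\Omega_p$ and in $\Omega_f$, using \cref{pressure} to write $\mathcal P\Delta u=\operatorname{div}\sigma$, and inserting the boundary conditions $p|_{\Gamma_1}=0$, $u|_{\Gamma_2}=0$ together with the interface conditions \eqref{eq:ns-darcyint}$_{1,2}$ and \eqref{bdry:BJ} (recalling $n_\Gamma=n_f|_\Gamma=-n_p|_\Gamma$), I expect to arrive at
\begin{equation*}
  \langle\mb{A}_{\mathrm{BJ}}v,v\rangle_{X_0}
  = -k\|\nabla p\|_{\rL^2}^2-2\mu\|D(u)\|_{\rL^2}^2-\beta\sum_{\ell=1}^2\int_\Gamma|u\cdot t^\ell_\Gamma|^2
  + i\,I_0+C_\beta ,
\end{equation*}
where $I_0:=2k\int_\Gamma\mathrm{Im}\!\left(p\,\overline{\partial_{n_\Gamma}p}\right)\in\R$ is the \emph{purely imaginary} contribution of the normal coupling and $C_\beta:=\beta\sum_{\ell=1}^2\int_\Gamma(\nabla p\cdot t^\ell_\Gamma)\,\overline{(u\cdot t^\ell_\Gamma)}$ is the tangential cross term produced by the Darcy-velocity part $\nabla p$ in \eqref{bdry:BJ}. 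The first three terms are real and nonpositive; they furnish the available damping.

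Dissipativity of $e^{i\vartheta}\mb{A}_{\mathrm{BJ}}$ means $\mathrm{Re}\big(e^{i\vartheta}\langle\mb{A}_{\mathrm{BJ}}v,v\rangle_{X_0}\big)\le0$, that is
\begin{equation*}
  \cos\vartheta\,\mathrm{Re}\langle\mb{A}_{\mathrm{BJ}}v,v\rangle_{X_0}-\sin\vartheta\,\mathrm{Im}\langle\mb{A}_{\mathrm{BJ}}v,v\rangle_{X_0}\le0,\qquad|\vartheta|<\theta<\tfrac\pi2 .
\end{equation*}
Since $\cos\vartheta\ge\cos\theta>0$, the damping multiplies a strictly positive factor, and the task is to dominate the $\sin\vartheta$-multiple of the imaginary part together with the real and imaginary parts of $C_\beta$. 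For the normal term I would use the mass-conservation condition $u\cdot n_\Gamma=-k\,\partial_{n_\Gamma}p$ to remove the normal derivative, rewriting $I_0=-2\int_\Gamma\mathrm{Im}\big(p\,\overline{u\cdot n_\Gamma}\big)$, so that $I_0$ becomes a product of the boundary traces of $p$ and $u$ alone. The trace theorem, the compact trace embedding (Ehrling's lemma), the Poincaré inequality on $\Omega_p$ (available since $p|_{\Gamma_1}=0$) and Korn's together with Poincaré's inequality on $\Omega_f$ (available since $u|_{\Gamma_2}=0$) then yield, for every $\varepsilon>0$, a bound $|I_0|\le\varepsilon\big(\|\nabla p\|_{\rL^2}^2+\|D(u)\|_{\rL^2}^2\big)+C_\varepsilon\|v\|_{X_0}^2$, where the residual $\|v\|_{X_0}^2$ is itself reabsorbed into the damping by Poincaré and Korn. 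Balancing $\varepsilon$ against $\tan\vartheta$ and the coefficients $k,\mu$ then lets this contribution be absorbed into a fraction of the damping for every $|\vartheta|<\theta$.

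The decisive difficulty, which I expect to be the crux of the proof, is the tangential term $C_\beta$. In contrast to $I_0$, it involves the tangential trace $\nabla p\cdot t^\ell_\Gamma=\partial_{t^\ell_\Gamma}(p|_\Gamma)$, i.e. the surface gradient of the boundary value of $p$; this is a boundary quantity of order $3/2$ which is \emph{not} controlled by the first-order energy $\|\nabla p\|_{\rL^2(\Omega_p)}$, nor by $\|v\|_{X_0}$, so no first-order energy estimate can absorb it outright. This is precisely the point where the smallness of $\beta$ becomes indispensable and why one is forced into the Hilbert-space setting. I would estimate $|C_\beta|\le\beta\|\nabla p\|_{\rL^2(\Gamma)}\|u\|_{\rL^2(\Gamma)}$, interpolate $\|\nabla p\|_{\rL^2(\Gamma)}$ between $\rH^1(\Omega_p)$ and $D(\Delta_m)=\rH^{2,q}_{\Gamma_1}(\Omega_p)$, and use Young's inequality so that, after accounting for $\|u\|_{\rL^2(\Gamma)}\lesssim\|D(u)\|_{\rL^2}$, the term is dominated by $\beta$ times the damping plus lower-order contributions; choosing $\beta$ small (depending on $\theta$) then closes the estimate.

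In summary, the skeleton is: (i) the energy identity above; (ii) the observation that the normal-coupling terms combine into a purely imaginary quantity $iI_0$; (iii) the elimination of the normal derivative in $I_0$ via mass conservation and its control by traces through Poincaré, Korn and Ehrling; and (iv) the smallness-of-$\beta$ absorption of the tangential Darcy term. The coercivity encoded in $s(\mb{A}_{\mathrm{BJ}})<0$ from \cref{prop:mr}(a) is what ultimately allows the lower-order residuals to be reabsorbed, so that $\mathrm{Re}\big(e^{i\vartheta}\langle\mb{A}_{\mathrm{BJ}}v,v\rangle_{X_0}\big)\le0$ holds for all $|\vartheta|<\theta$.
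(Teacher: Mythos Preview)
Your overall strategy --- integrating by parts, inserting the interface conditions, and splitting the boundary contribution into the purely imaginary normal-coupling piece $iI_0$ and the tangential cross term $C_\beta$ --- matches the paper. The genuine gap is in your treatment of $C_\beta$. You propose to bound $\|\nabla p\|_{\rL^2(\Gamma)}$ by interpolating between $\rH^1(\Omega_p)$ and $D(\Delta_m)=\rH^2(\Omega_p)$; but this unavoidably introduces a factor $\|p\|_{\rH^2(\Omega_p)}$ into the inequality, and there is nothing in the available damping $k\|\nabla p\|_{\rL^2}^2+2\mu\|D(u)\|_{\rL^2}^2$ that can absorb such a term. Smallness of $\beta$ does not rescue the estimate, since the ratio $\|p\|_{\rH^2}/\|p\|_{\rH^1}$ is unbounded on $D(\mb{A}_{\mathrm{BJ}})$. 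The argument as written does not close.

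The paper bypasses this obstacle by never estimating $\nabla p\cdot t^{\ell}_{\Gamma}$ in $\rL^2(\Gamma)$. Since $(\nabla p\cdot t^{\ell}_{\Gamma})|_\Gamma=\partial_{t^{\ell}_{\Gamma}}(p|_\Gamma)$ is a \emph{tangential} derivative of the trace and the tangential derivative maps $\rH^{1/2}(\Gamma)\to\rH^{-1/2}(\Gamma)$ boundedly, one gets directly
\[
\Bigl|\int_\Gamma(\nabla p\cdot t^{\ell}_{\Gamma})\,\overline{(u\cdot t^{\ell}_{\Gamma})}\Bigr|
=\bigl|\langle \nabla p, u\rangle_{\rH^{-1/2}(\Gamma),\rH^{1/2}(\Gamma)}\bigr|
\le C\,\|p\|_{\rH^1(\Omega_p)}\,\|u\|_{\rH^1(\Omega_f)} ,
\]
using only the trace theorem. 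Thus $|C_\beta|\le\tfrac{C\beta}{2}\bigl(\|p\|_{\rH^1}^2+\|u\|_{\rH^1}^2\bigr)$, which Poincar\'e on $\Omega_p$ and Korn on $\Omega_f$ convert into a multiple of the damping; for $\beta$ small (depending on $\theta,k,\mu$ and the geometry) this is absorbed by $\cos\vartheta\cdot(k\|\nabla p\|^2+2\mu\|D(u)\|^2)$. So $C_\beta$ is not the higher-order obstruction you feared --- once the $\rH^{-1/2}$--$\rH^{1/2}$ duality on $\Gamma$ is exploited, it lives entirely at the $\rH^1$ level. (As an aside, the paper's displayed computation does gloss over the $\sin\vartheta\cdot I_0$ contribution you correctly identified; your rewriting via $u\cdot n_\Gamma=-k\,\partial_{n_\Gamma}p$ and the trace bound $|I_0|\lesssim\|p\|_{\rH^1}\|u\|_{\rH^1}$ is the natural way to treat it.)
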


\begin{proof}
For $(p,{u})\in D(\mb{A}_{BJ})$ we calculate
\begin{equation*}
	\begin{aligned}
		2 \mathrm{Re}	\left\langle e^{ i\vartheta}\mb{A}_{BJ}(p,{u}), (p,{u})\right\rangle_{{X}_0}
		&= \left\langle e^{ i\vartheta}(k\Delta_m p, A_m {u}), (p,{u})\right\rangle_{{X}_0} + \left\langle (p,{u}),e^{ i\vartheta}(k\Delta_m p, A_m {u}) \right\rangle_{{X}_0}
		\\
		&= e^{ i\vartheta}\Big(\int\limits_{\Omega_p} k\Delta_m p\cdot \bar{p} + \int\limits_{\Omega_f} A_m{u}\cdot \bar{u}\Big)
		+ e^{-i\vartheta}\Big(\int\limits_{\Omega_p} k\Delta_m \bar{p}\cdot p + \int\limits_{\Omega_f} A_m \bar{u}\cdot {u}\Big)
		\\ &= -2 \cos \vartheta \Big(k\int\limits_{\Omega_p}|\nabla p|^2 + 2\mu \int_{\Omega_f} |D(u)|^2 \big) \\ 
		& \quad - \beta\sum_{\ell=1}^{2}\int\limits_{\Gamma}|{u}\cdot t^{\ell}_{\Gamma}|^2 + \beta\sum_{\ell=1}^{2} \mathrm{Re} \int\limits_{\Gamma} e^{i\vartheta} ({u}\cdot t^{\ell}_{\Gamma})(\nabla \bar{p}\cdot t^{\ell}_{\Gamma})
	\end{aligned}
\end{equation*}
Note that  
\begin{align*}
	|\beta\int\limits_{\Gamma_I}({u}\cdot t^{\ell}_{\Gamma})(\nabla \bar{p}\cdot t^{\ell}_{\Gamma})| = \beta|\langle\nabla p, {u}\rangle_{H^{-1/2}(\Gamma_I),H^{1/2}(\Gamma_I)}|&\leq \beta\cdot\|p\|_{H^1(\Omega_p)}\|{u}\|_{H^1(\Omega_f)} \\
	&\leq \frac{\beta}{2} \cdot (\|p\|^2_{H^1(\Omega_p)}+\|{u}\|^2_{H^1(\Omega_b)}).
\end{align*}
Hence, if $\vartheta\in (-\theta,\theta)$ and $\beta\geq 0$ is small enough, we may conclude that $e^{ i\vartheta}\mb{A}_{\mathrm{BJ}}$ is a dissipative operator in $\mb{X}_0$.
\end{proof}

Next, we would like  to show that the operator $\lambda_0 I - e^{ i\vartheta}\mb{A}_{\mathrm{BJ}} \colon D(\mb{A}_{\mathrm{BJ}}) \subset {X}_0 \to {X}_0$ is surjective for some $\lambda_0$. 
In the following, we show that this is true in particular for $\lambda_0=1$.

\begin{lemma}\label{lem:maximal}
{\color{black}Let $\theta <\frac{\pi}{2}$. The operator $1-e^{i\vartheta}\mb{A}_{\mathrm{BJ}}$ is surjective for all $\vartheta \in (-\theta,\theta)$.}
\end{lemma}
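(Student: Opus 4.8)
The plan is to prove surjectivity by a variational (Lax--Milgram) argument followed by an elliptic regularity upgrade. Since $e^{\pm i\vartheta}$ are invertible scalars, surjectivity of $1-e^{i\vartheta}\mb{A}_{\mathrm{BJ}}$ is equivalent to surjectivity of $\lambda - \mb{A}_{\mathrm{BJ}}$ with $\lambda := e^{-i\vartheta}$, and I note that $\mathrm{Re}\,\lambda = \cos\vartheta \geq \cos\theta > 0$ uniformly in $\vartheta \in (-\theta,\theta)$. Given $(g_p,g_f) \in X_0$, I would seek $(p,u)$ solving $\lambda p - k\Delta p = g_p$ in $\Omega_p$ and $\lambda u - A_m u = g_f$ in $\Omega_f$, subject to the interface conditions encoded in \eqref{dom:ApfBJ}. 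Testing the Darcy equation against $q$ and the Stokes equation against a solenoidal $v$, integrating by parts, using $u\cdot n_\Gamma = -k\nabla p\cdot n_\Gamma$ to replace the Darcy flux trace by $-u\cdot n_\Gamma$, and inserting the stress condition $\sigma n_\Gamma = -pn_\Gamma - \beta\sum_\ell((u-\nabla p)\cdot t^\ell_\Gamma)t^\ell_\Gamma$ from \eqref{dom:ApfBJ}, leads to the sesquilinear form on $V := \rH^{1,2}_{\Gamma_1}(\Omega_p)\times\big(\rH^{1,2}_{\Gamma_2}(\Omega_f)\cap\rL^2_\sigma(\Omega_f)\big)$ given by
\begin{align*}
	b\big((p,u),(q,v)\big) &= \lambda\big(\langle p,q\rangle_{\Omega_p} + \langle u,v\rangle_{\Omega_f}\big) + k\langle\nabla p,\nabla q\rangle_{\Omega_p} + 2\mu\langle D(u),D(v)\rangle_{\Omega_f} + \beta\sum_{\ell=1}^{2}\langle u\cdot t^\ell_\Gamma, v\cdot t^\ell_\Gamma\rangle_\Gamma \\
	&\quad + \langle p, v\cdot n_\Gamma\rangle_\Gamma - \langle u\cdot n_\Gamma, q\rangle_\Gamma - \beta\sum_{\ell=1}^{2}\langle \nabla p\cdot t^\ell_\Gamma, v\cdot t^\ell_\Gamma\rangle_\Gamma ,
\end{align*}
where $\langle\cdot,\cdot\rangle$ denotes the $\rL^2$-inner product (conjugate-linear in the second slot). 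A weak solution is then $(p,u)\in V$ with $b((p,u),(q,v)) = \langle g_p,q\rangle_{\Omega_p} + \langle g_f,v\rangle_{\Omega_f}$ for all $(q,v)\in V$.

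Boundedness of $b$ on $V\times V$ follows from the trace theorem for all boundary integrals except $\langle\nabla p\cdot t^\ell_\Gamma, v\cdot t^\ell_\Gamma\rangle_\Gamma$, which I interpret as the duality pairing $\langle\nabla_\Gamma p, v\cdot t^\ell_\Gamma\rangle_{\rH^{-1/2}(\Gamma),\rH^{1/2}(\Gamma)}$ exactly as in the proof of \cref{lem:dissipative} and estimate by $C\|p\|_{\rH^{1,2}(\Omega_p)}\|v\|_{\rH^{1,2}(\Omega_f)}$. For coercivity I evaluate $\mathrm{Re}\,b((p,u),(p,u))$. The decisive point is that the two normal-coupling terms combine into $\langle p, u\cdot n_\Gamma\rangle_\Gamma - \langle u\cdot n_\Gamma, p\rangle_\Gamma = 2i\,\mathrm{Im}\,\langle p, u\cdot n_\Gamma\rangle_\Gamma$, which is purely imaginary and thus drops out of the real part. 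Hence
\begin{equation*}
	\mathrm{Re}\,b\big((p,u),(p,u)\big) = \cos\vartheta\big(\|p\|_{\rL^2}^2+\|u\|_{\rL^2}^2\big) + k\|\nabla p\|_{\rL^2}^2 + 2\mu\|D(u)\|_{\rL^2}^2 + \beta\sum_{\ell=1}^{2}\|u\cdot t^\ell_\Gamma\|_{\rL^2(\Gamma)}^2 - \beta\,\mathrm{Re}\sum_{\ell=1}^{2}\langle\nabla p\cdot t^\ell_\Gamma, u\cdot t^\ell_\Gamma\rangle_\Gamma .
\end{equation*}
Dropping the nonnegative friction term, bounding the last term by $\tfrac{\beta}{2}\big(\|p\|_{\rH^{1,2}(\Omega_p)}^2+\|u\|_{\rH^{1,2}(\Omega_f)}^2\big)$ as in \cref{lem:dissipative}, and using $\cos\vartheta\geq\cos\theta>0$ together with the Korn inequality (valid since $u|_{\Gamma_2}=0$) to control $\|u\|_{\rH^{1,2}(\Omega_f)}$ by $\|D(u)\|_{\rL^2}$, I obtain $\mathrm{Re}\,b((p,u),(p,u))\geq c\|(p,u)\|_V^2$ with $c>0$ independent of $\vartheta$, provided $\beta$ is small enough. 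The Lax--Milgram theorem then yields a unique weak solution $(p,u)\in V$.

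It remains to promote this weak solution to a genuine element of $D(\mb{A}_{\mathrm{BJ}})$, which I expect to be the main obstacle. Testing with $(q,0)$, $q\in\rC^\infty_c(\Omega_p)$, and with solenoidal $(0,v)$, $v\in\rC^\infty_{c,\sigma}(\Omega_f)$, recovers $\lambda p - k\Delta p = g_p$ and, via de Rham's theorem, $\lambda u - \mu\Delta u + \nabla\pi = g_f$ with $\operatorname{div}u = 0$ for some pressure $\pi$; in particular $\Delta p,\Delta u \in \rL^2$. Up-to-the-boundary $\rH^2$-regularity then follows from the $\rL^2$-theory for the Laplacian with mixed Dirichlet--Neumann conditions and for the Stokes system with mixed Dirichlet--stress conditions on the smooth boundary (cf.\ \cite{Pru:18,See:72}). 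The genuine difficulty is the feedback at $\Gamma$: the Stokes stress datum involves $p|_\Gamma$ and $\nabla_\Gamma p$, while the Darcy Neumann datum involves $u\cdot n_\Gamma$, so the two $\rH^2$-estimates are coupled through same-order interface contributions. I would close this loop by a bootstrap anchored on the coercivity estimate, which already controls $(p,u)$ in $\rH^{1,2}\times\rH^{1,2}$, treating the $\beta$-dependent cross term as a perturbation; smallness of $\beta$ is precisely what renders the resulting fixed-point argument contractive and is the reason the statement is confined to the Hilbert-space setting. Once $(p,u)\in\rH^{2,2}(\Omega_p)\times(\rH^{2,2}(\Omega_f)\cap\rL^2_\sigma(\Omega_f))$ is established, integrating by parts back and comparing with the weak identity for arbitrary $(q,v)\in V$ recovers the natural conditions $k\nabla p\cdot n_\Gamma = -u\cdot n_\Gamma$ and $\sigma n_\Gamma = -pn_\Gamma - \beta\sum_\ell((u-\nabla p)\cdot t^\ell_\Gamma)t^\ell_\Gamma$, so that $(p,u)\in D(\mb{A}_{\mathrm{BJ}})$, which proves surjectivity.
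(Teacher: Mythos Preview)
Your approach is essentially the same as the paper's: a Lax--Milgram argument on $V = \rH^1_{\Gamma_1}(\Omega_p)\times(\rH^1_{\Gamma_2}(\Omega_f)\cap\rL^2_\sigma(\Omega_f))$ followed by an elliptic regularity bootstrap. Your sesquilinear form is in fact more carefully written than the paper's, since you retain the Beaver--Joseph contribution $-\beta\sum_\ell\langle\nabla p\cdot t^\ell_\Gamma,v\cdot t^\ell_\Gamma\rangle_\Gamma$ (the paper's displayed form and coercivity computation omit it, apparently a slip), and your coercivity argument correctly invokes smallness of $\beta$ to absorb it, exactly as in \cref{lem:dissipative}.

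Where you overcomplicate matters is the regularity step. You anticipate a genuine feedback loop between the Darcy and Stokes $\rH^2$-estimates and propose closing it by a fixed-point argument using smallness of $\beta$. In fact the bootstrap is \emph{triangular}, not circular, and this is how the paper proceeds. The Darcy Neumann datum on $\Gamma$ is $k\,\partial_{n_p}p = u\cdot n_\Gamma$, which depends only on $u$ and already lies in $\rH^{1/2}(\Gamma)$ from $u\in\rH^1(\Omega_f)$. Elliptic regularity for the Laplacian with mixed Dirichlet--Neumann conditions therefore yields $p\in\rH^2(\Omega_p)$ directly. Now $\nabla p\in\rH^1(\Omega_p)$, so $\nabla p|_\Gamma\in\rH^{1/2}(\Gamma)$, and the full Stokes stress datum $g_2 = -p\,n_\Gamma - \beta\sum_\ell((u-\nabla p)\cdot t^\ell_\Gamma)\,t^\ell_\Gamma$ lies in $\rH^{1/2}(\Gamma)$; $\rH^2$-regularity for the Stokes system with mixed boundary conditions then gives $u\in\rH^2(\Omega_f)$. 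No smallness of $\beta$ and no contraction argument are needed at this stage --- smallness of $\beta$ enters only through coercivity of the form (and dissipativity in \cref{lem:dissipative}).
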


\begin{proof} 
Given $(f_1, f_2)\in {X}_0$, we need to find  $(p,u)\in {D}(\mb{A}_{\mathrm{BJ}})$ satisfying the following equations
	\begin{equation}\label{eq:max1}
			\left\{
			\begin{aligned} 
					p - e^{ i\vartheta} k \Delta p&=f_1,\\
					u - e^{ i\vartheta} A_m u&=f_2.
				\end{aligned}
			\right. 
		\end{equation}
Consider the space
	\begin{equation*}
			{Y}:= \rH^1_{\Gamma_1}(\Omega_p) \times \bigl( \rH^1_{\Gamma_2}(\Omega_f) \cap \rL^2_{\sigma}(\Omega_f) \bigr) 
		\end{equation*}
	equipped with the inner product
	\begin{align*}
			\left\langle(p_1,u_1),(p_2,u_2)\right\rangle_{{Y}}:=&
			\int\limits_{\Omega_p} p_1\bar{p}_2
			+  \int\limits_{\Omega_p} \nabla p_1 \cdot \nabla \bar{p}_2 
			+\int\limits_{\Omega_f} u_1\cdot \bar{u}_2
			+2 \mu \int\limits_{\Omega_f} D(u_1):D(\bar{u}_2).
		\end{align*}
	{\color{black} Motivated by the interface conditions}, we define the sesquilinear linear form $a: {Y}\times {Y}\rightarrow \mathbb{C}$ as
	\begin{align*}
			a((p,u), (\xi,\zeta))&:=  \int\limits_{\Omega_p} p\cdot\bar{\xi}+ \int\limits_{\Omega_f} u\cdot\bar{\zeta} 
			+ e^{ i\vartheta}\biggl(k\int\limits_{\Omega_p}\nabla p\cdot\nabla\bar{\xi} + 2\mu \int\limits_{\Omega_f} D(u):D(\bar{\zeta}) \biggr) \\
			& \qquad + e^{i \vartheta}\biggl(\langle p{n}_{\Gamma},{\zeta} \rangle_{\Gamma}+  \beta\sum_{\ell=1}^{2}\langle u\cdot t^{\ell}_{\Gamma},\zeta\cdot t^{\ell}_{\Gamma}\rangle_{\Gamma}+ \langle -u\cdot{n}_{\Gamma},\xi\rangle_{\Gamma}\biggr).
		\end{align*}
Since, 
\begin{align*}
\mathrm{Re}\ a((p,u), (p,u)) &= \int\limits_{\Omega_p} |p|^2 + \cos\vartheta \cdot \left (k\int\limits_{\Omega_p}|\nabla p|^2+\int\limits_{\Omega_f} |u|^2+ 2\mu \int\limits_{\Omega_f} |D(u)|^2+
\beta\sum_{\ell=1}^{2}\int\limits_{\Gamma}|u\cdot t^{\ell}_{\Gamma}|^2 \right) \\ & \geq {\color{black}c(\vartheta)\|(p,u)\|_{{Y}}^2},
\end{align*}
for $(p,u)\in {Y}$, it follows that the sesquilinear linear form $a$ is coercive. Moreover, $a$ is also continuous on ${Y}\times {Y}$.
Furthermore, consider the mapping $L:{Y}\rightarrow \mathbb{C}$ given by
	\begin{equation*}
			L((f_1, f_2),(\xi,\zeta)):= \int\limits_{\Omega_p} f_1 \cdot \bar{\xi} + \int\limits_{\Omega_f} f_2\cdot \bar{\zeta} .
		\end{equation*}
Obviously, the mapping $L$ is linear and continuous on ${Y}$. Applying the Lax-Milgram theorem we  conclude that there exists a unique weak solution to \eqref{eq:max1}.
It follows that $(u,p) \in \rH^1(\Omega_{p}) \times \rH^1(\Omega_f)$ and satisfies \eqref{eq:max1} and \eqref{eq:ns-darcyint} in the sense of distributions. 
%
%
%
	The trace theorem implies that $-p \in \rH^{1/2}(\Gamma)$ and $u \in \rH^{1/2}(\Gamma)$. It follows $u \cdot {n}_\Gamma, u \cdot {t}_\Gamma^j \in \rH^{1/2}(\Gamma)$. Therefore we obtain an inhomogeneous Neumann problem for the Laplacian, given by
	\begin{equation}
			\left\{
			\begin{aligned} 
					p-k \Delta p  &= f_1 &&\text{ on } \Omega_p, \\
					- k \nabla p\cdot {n}_{\Gamma} &= g_1 &&\text{ on } \Gamma,
				\end{aligned} 
			\right.
			\label{eq:porous}
		\end{equation}
	with $f_1 \in \rL^2(\Omega_{p})$ and $g_1 := u\cdot {n}_{\Gamma} \in \rH^{1/2}(\Gamma)$,
	and a Stokes problem, given by
	\begin{equation} 
\left\{
\begin{aligned}
-A_m u  &= f_2 &&\text{ on } \Omega_f, \\
\sigma {n}_{\Gamma} &=  -p{n}_{\Gamma}-\beta\sum_{\ell=1}^{2}(u\cdot t^{\ell}_{\Gamma})t^{\ell}_{\Gamma}= g_2, &&\text{ on } \Gamma, \\
u &= 0 &&\text{ on } \Gamma_2,
				\end{aligned} 
			\right. 
			\label{eq:fluid2}
		\end{equation}
with $f_2 \in \rL^2(\Omega_{f})$ and $g_2  \in \rH^{1/2}(\Gamma)$. We know e.g. from \cite[Theorem III.4.3]{MR2986590} that the solution $p$ of \eqref{eq:porous} satisfies  
$p \in \rH^2(\Omega_p)$. Finally, \cite[Theorem IV.7.1]{MR2986590} yields that the solution $u$ of \eqref{eq:fluid2} satisfies  $u \in \rH^2(\Omega_f)$. 
Summing up, we  conclude that $(p,u)\in {D}(\mb{A}_{\mathrm{BJ}})$.
\end{proof} 

Given Lemma \ref{lem:dissipative} and Lemma \ref{lem:maximal} it is now not difficult anymore to show that the Beaver-Joseph operator generates an analytic semigroup on $X_0$.  

\begin{proposition}\label{thm:opBJ}
Assume that $\beta > 0$ is small enough and let $\mb{A}_{BJ}$ with domain $D(\mb{A}_{BJ})$ be given as in  \eqref{def:ABJ}--\eqref{dom:ApfBJ}. Then 
$(\mb{A}_{BJ},D(\mb{A}_{BJ}))$ generates a bounded, compact, analytic semigroup on ${X}_0$, the Beaver-Joseph semigroup. In addition, $\mb{A}_{BJ}$ has the property of maximal $\rL^2$-regularity on ${X}_0$. 
\end{proposition}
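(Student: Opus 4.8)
The plan is to combine the two preceding lemmas through the Lumer--Phillips theorem applied to the rotated operators $e^{i\vartheta}\mb{A}_{BJ}$, and then to pass from the resulting family of rotated contraction semigroups to analyticity by the standard characterization of bounded analytic semigroups. The essential analytic work --- the dissipativity estimate and the surjectivity obtained via Lax--Milgram together with elliptic regularity --- has already been carried out in \cref{lem:dissipative} and \cref{lem:maximal}, so the proof of the proposition is largely a matter of assembling semigroup theory.

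First I would fix an angle $\theta \in (0,\tfrac{\pi}{2})$ and take $\beta > 0$ small enough that \cref{lem:dissipative} and \cref{lem:maximal} both hold on the full interval $\vartheta \in (-\theta,\theta)$. Since $X_0 = \rL^2(\Omega_p)\times\rL^2_\sigma(\Omega_f)$ is a Hilbert space and $D(\mb{A}_{BJ})$ contains all smooth solenoidal fields supported away from $\partial\Omega_p$ and $\partial\Omega_f$ (which satisfy the interface and boundary conditions trivially), the operator $\mb{A}_{BJ}$ is densely defined. For each fixed $\vartheta \in (-\theta,\theta)$, \cref{lem:dissipative} gives dissipativity of $e^{i\vartheta}\mb{A}_{BJ}$ and \cref{lem:maximal} gives surjectivity of $1 - e^{i\vartheta}\mb{A}_{BJ}$; thus the range condition of the Lumer--Phillips theorem holds with $\lambda_0 = 1$, and $e^{i\vartheta}\mb{A}_{BJ}$ is m-dissipative, hence closed and the generator of a $C_0$-semigroup of contractions on $X_0$. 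In particular $\mb{A}_{BJ}$ itself (the case $\vartheta = 0$) is closed and generates a contraction, hence bounded, semigroup.

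Since $e^{i\vartheta}\mb{A}_{BJ}$ generates a uniformly bounded (contraction) semigroup for every $\vartheta \in (-\theta,\theta)$, the classical characterization of bounded analytic semigroups via rotated generators (see \cite[Section II.4]{EN00}) yields that $\mb{A}_{BJ}$ generates a bounded analytic $C_0$-semigroup of angle $\theta$ on $X_0$. For the remaining two properties I would argue as follows. The inclusion $D(\mb{A}_{BJ}) \hookrightarrow \rH^2(\Omega_p)\times(\rH^2(\Omega_f)\cap\rL^2_\sigma(\Omega_f))$ embeds compactly into $X_0$ by the Rellich--Kondrachov theorem on the bounded smooth domains $\Omega_p,\Omega_f$, so $\mb{A}_{BJ}$ has compact resolvent; an analytic semigroup with compact resolvent is compact for $t > 0$. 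Finally, maximal $\rL^2$-regularity is automatic: on a Hilbert space every generator of a bounded analytic semigroup has maximal $\rL^2$-regularity, by the classical theorem of de~Simon.

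The only point requiring genuine care is the interplay between the smallness of $\beta$ and the opening angle $\theta$. The dissipativity estimate in \cref{lem:dissipative} degenerates as $\vartheta \to \tfrac{\pi}{2}$, because the dissipative (negative) main term carries a factor $\cos\vartheta$ while the $\beta$-dependent boundary cross term does not; hence $\beta$ must be chosen small relative to $\cos\theta$. For a fixed small $\beta$ one therefore obtains analyticity of some angle $\theta = \theta(\beta) < \tfrac{\pi}{2}$ --- which is enough for the stated conclusion --- and one should resist claiming the full angle $\tfrac{\pi}{2}$ uniformly in $\beta$.
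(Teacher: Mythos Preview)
Your proposal is correct and follows essentially the same route as the paper: apply Lumer--Phillips to the rotated operators $e^{i\vartheta}\mb{A}_{BJ}$ using \cref{lem:dissipative} and \cref{lem:maximal}, then invoke the standard characterization of bounded analytic semigroups via rotated generators (the paper cites \cite[Cor.~3.9.9]{ABHN:11} rather than \cite[Section II.4]{EN00}, but the content is the same). Your write-up is in fact more complete than the paper's two-line proof, since you spell out density of the domain, compactness via Rellich--Kondrachov, and maximal $\rL^2$-regularity via de~Simon's theorem, and you correctly flag the dependence of the admissible angle on $\beta$.
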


\begin{proof}
The  Lumer-Phillips theorem implies together with  \cref{lem:dissipative} and \cref{lem:maximal} that $e^{\pm i\vartheta}\mb{A}: D(\mb{A})\rightarrow {X}_0$ generate contraction semigroups on $X_0$.
Now \cite[Cor 3.9.9]{ABHN:11} implies the claim. 
\end{proof}

\section{Strong well-posedness}
	\label{sec:wellposedness}


We rewrite  the equations \eqref{eq:ns-darcyp}--\eqref{eq:ns-darcyint} as a semilinear evolution equation of the form 
\begin{equation}\label{eq:acp}
	v'-\mb{A} {v}
	= {F} ({v},{v}) + {f},
	\quad {v}(0)=
	\binom{p_0}{u_0},
\end{equation}
for the unknown ${v} := (p,u)^T$ with forcing  term $f := (f_p,f_f)^T$, and where the nonlinear term ${F}$ is defined as
\begin{equation*}
{F}(v_1,v_2) 
:=\begin{pmatrix}
		0 \\ -\mathcal{P} (u_1 \cdot \nabla u_2) 
	\end{pmatrix},
\end{equation*}
for ${v}_i := (p_i,u_i)$ with $i \in \{1,2\}$. For brevity, we set ${F}({v}) := {F}({v},{v})$. Furthermore, we recall the maximal regularity spaces 
\begin{align*}
 \mathbb{E}_{1,\mu}(T) &:= \rH^{1,r}_{\mu}(0,T;{X}_0) \cap  \rL^r_{\mu}(0,T;{X}_1) \mbox{ and } \\
_0 \mathbb{E}_{1,\mu}(T) &:= \{{v}\in\rH^{1,r}_{\mu}(0,T;{X}_0) \cap  \rL^r_{\mu}(0,T;{X}_1) \mid {v}(0) = 0 \}, 
\end{align*}
the data space $\E_{0,\mu}(T) := \rL^r_\mu(0,T;{X}_0)$ as well as the trace space ${X}_{\gamma,\mu} := [{X}_0,{X}_1]_{\mu-\frac{1}{r},r}$. 
For the critical weight $\mu_c := \frac{3}{2q}-\frac{1}{2}+\frac{1}{r}$ we know from \cref{prop:mr}(c) that the trace space is given by
\begin{equation}
	X_{\gamma,\mu_c}
	= \rB_{q,r,\Gamma_1}^{\frac{3}{q}-1}(\Omega_p)\times \big(\rB_{q,r,\Gamma_2}^{\frac{3}{q}-1}(\Omega_f) \cap \rL^q_{\sigma}(\Omega_f)\big) .
	\label{eq:trace spaces}
\end{equation}
	
By H\"older's inequality and Sobolev embeddingswe obtain the following estimate on the Navier-Stokes nonlinearity. 

\begin{lemma}\label{lem:F}
Let $q \in (1,3)$ and $s = \frac{1}{2}\left(1+ \frac{3}{q} \right)$.
	Then the bilinear mapping  
	\begin{equation*} 
		G \colon \rH^{s,q}(\Omega_f) \times  \rH^{s,q}(\Omega_f) \to \rL^q(\Omega_f) \colon (u_1,u_2) \mapsto u_1 \cdot \nabla u_2
	\end{equation*} 
satisfies
	\begin{equation*}
		\|G(u_1,u_1)-G(u_2,u_2)\|_{\rL^{q}(\Omega_f)} \leq C \cdot \bigl( \| u_1 \|_{\rH^{s,q}(\Omega_f)} + \| u_2 \|_{\rH^{s,q}(\Omega_f)} \bigr) \cdot \| u_1 - u_2 \|_{\rH^{s,q}(\Omega_f)} 
	\end{equation*}
	for all $u_1, u_2 \in \rH^{s,q}(\Omega_f)$.
\end{lemma}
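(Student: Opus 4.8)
The plan is to first reduce the claimed Lipschitz estimate to a single bilinear boundedness estimate, and then to establish the latter by H\"older's inequality combined with two scaling-critical Sobolev embeddings on the bounded smooth domain $\Omega_f \subset \R^3$.

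First I would record the algebraic identity, which is immediate from the bilinearity of $G$,
\begin{equation*}
	G(u_1,u_1) - G(u_2,u_2) = G(u_1, u_1 - u_2) + G(u_1 - u_2, u_2).
\end{equation*}
Together with the triangle inequality this reduces the assertion to the bilinear bound
\begin{equation*}
	\| u_1 \cdot \nabla u_2 \|_{\rL^q(\Omega_f)} \leq C \, \| u_1 \|_{\rH^{s,q}(\Omega_f)} \, \| u_2 \|_{\rH^{s,q}(\Omega_f)}, \qquad u_1, u_2 \in \rH^{s,q}(\Omega_f),
\end{equation*}
since then $\| G(u_1,u_1) - G(u_2,u_2) \|_{\rL^q} \leq C (\| u_1 \|_{\rH^{s,q}} + \| u_2 \|_{\rH^{s,q}}) \| u_1 - u_2 \|_{\rH^{s,q}}$ follows at once.

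Next I would apply H\"older's inequality with exponents $p_1, p_2$ satisfying $\frac{1}{q} = \frac{1}{p_1} + \frac{1}{p_2}$, which gives
\begin{equation*}
	\| u_1 \cdot \nabla u_2 \|_{\rL^q(\Omega_f)} \leq \| u_1 \|_{\rL^{p_1}(\Omega_f)} \, \| \nabla u_2 \|_{\rL^{p_2}(\Omega_f)}.
\end{equation*}
The decisive choice is $\tfrac{1}{p_1} = \tfrac{1}{q} - \tfrac{s}{3}$ and $\tfrac{1}{p_2} = \tfrac{1}{q} - \tfrac{s-1}{3}$, so that the (scaling-critical) Sobolev embeddings $\rH^{s,q}(\Omega_f) \hookrightarrow \rL^{p_1}(\Omega_f)$ and $\rH^{s,q}(\Omega_f) \hookrightarrow \rW^{1,p_2}(\Omega_f)$ control the two factors; the second one yields $\| \nabla u_2 \|_{\rL^{p_2}} \leq C \| u_2 \|_{\rH^{s,q}}$. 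Combining the three displays then gives the bilinear bound and hence the lemma.

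The one point requiring genuine care, and the reason behind the specific value $s = \tfrac{1}{2}\bigl(1 + \tfrac{3}{q}\bigr)$, is to check simultaneously that both embeddings are admissible and that the chosen exponents are H\"older-conjugate. Using $\tfrac{s}{3} = \tfrac{1}{6} + \tfrac{1}{2q}$ one computes $\tfrac{1}{p_1} = \tfrac{1}{2q} - \tfrac{1}{6}$ and $\tfrac{1}{p_2} = \tfrac{1}{2q} + \tfrac{1}{6}$, whence $\tfrac{1}{p_1} + \tfrac{1}{p_2} = \tfrac{1}{q}$ exactly. The hypothesis $q \in (1,3)$ is precisely what guarantees the admissibility of the embeddings: it forces $s < \tfrac{3}{q}$, so the first embedding stays strictly subcritical and avoids the $\rL^\infty$-endpoint, it forces $1 \le s$ together with $q < p_2 < \infty$, so the second embedding is valid, and it ensures $1 < p_1 < \infty$ (indeed $p_1 > 3$ and $p_2 \in (\tfrac{3}{2},3)$). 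I expect the only real work to be this bookkeeping of exponents; everything else is the standard Sobolev machinery on a bounded smooth domain.
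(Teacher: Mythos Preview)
Your proof is correct and follows essentially the same approach as the paper: both reduce to the bilinear bound via the identity $G(u_1,u_1)-G(u_2,u_2)=G(u_1,u_1-u_2)+G(u_1-u_2,u_2)$, then apply H\"older's inequality and the two scaling-critical Sobolev embeddings, with your exponents $p_1,p_2$ corresponding exactly to the paper's $qm',qm$. If anything, your bookkeeping of the admissibility conditions on $q\in(1,3)$ is more explicit than the paper's.
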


\begin{proof}
	\textcolor{black}{The proof follows the lines of the work of Pr\"{u}ss and Wilke \cite{PW:17}.}
H\"{o}lder's inequality implies
\begin{equation*}
\| (u_1\cdot\nabla)u_2 \|_{L^q(\Omega_f)}
		\leq \| u_1 \|_{\rL^{qm'}(\Omega_f)} \cdot\| \nabla u_2\|_{\rL^{qm}(\Omega_f)}
		\leq \| u_1 \|_{\rL^{qm}(\Omega_f)} \cdot
		\| u _2\|_{\rH^{1,qm'}(\Omega_f)} 
	\end{equation*}
	for $1/m+1/m' = 1$. {\color{black}We choose $\frac{3}{qm} = \frac{1}{2} \cdot \left( 1 + \frac{3}{q} \right)$ with $q \in (1,3)$.} Using the Sobolev embeddings
	\begin{equation*}
		\rH^{s,q}(\Omega_f) \hookrightarrow \rL^{qm}(\Omega_f) 
		\quad \text{ and } \quad \rH^{s,q}(\Omega_f) \hookrightarrow \rH^{1,qm'}(\Omega_f)  
	\end{equation*}
	for $s = \frac{1}{2} \left( 1 + \frac{3}{q} \right)$ we  obtain
	\begin{equation*}
		\| (u_1 \cdot \nabla) u_2 \|_{\rL^q(\Omega_f)}
		\leq C \cdot \| u_1 \|_{\rH^{s,q}(\Omega_f)} \cdot
		\| u _2\|_{\rH^{s,q}(\Omega_f)} .
	\end{equation*} 
	Using the bilinear structure of $G$, i.e.
	\begin{equation*}
		G(u_1,u_1)-G(u_2,u_2) = G(u_1,u_1-u_2) + G(u_1-u_2,u_2)
	\end{equation*}
	the claim follows from the previous estimate.
\end{proof}

\begin{lemma}\label{lem:Xbeta}
We have $X_\beta := [X_0,X_1]_\beta \hookrightarrow \rH^{2\beta,q}(\Omega_p) \times \bigl( \rH^{2\beta,q}(\Omega_f) \cap \rL^q_{\sigma}(\Omega_f) \bigr)$ for $\beta \in (0,1)$.
\end{lemma}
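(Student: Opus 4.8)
The plan is to reduce the claim to the scalar interpolation identity $[\rL^q,\rH^{2,q}]_\beta=\rH^{2\beta,q}$ by first passing from the non-diagonal domain $X_1$ to the larger, diagonal space obtained by dropping the interface and boundary conditions, and then using that complex interpolation commutes with finite Cartesian products and is monotone under continuous inclusions. Since the asserted conclusion is only an embedding (not an identity), this is essentially a soft argument.

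First I would record that $X_1\hookrightarrow Y_1:=\rH^{2,q}(\Omega_p)\times\bigl(\rH^{2,q}(\Omega_f)\cap \rL^q_{\sigma}(\Omega_f)\bigr)$. Indeed, by \eqref{dom:opmatrix} we have $X_1\subset D(\mb{A}_m)=\rH^{2,q}_{\Gamma_1}(\Omega_p)\times\bigl(\rH^{2,q}_{\Gamma_2}(\Omega_f)\cap \rL^q_{\sigma}(\Omega_f)\bigr)$, and forgetting the Dirichlet traces at $\Gamma_1$ and $\Gamma_2$ gives the continuous inclusion $D(\mb{A}_m)\hookrightarrow Y_1$; that the graph norm of $\mb{A}$ on $X_1$ controls the $Y_1$-norm is exactly elliptic regularity for $\Delta$ on $\Omega_p$ together with the stationary Stokes estimate on $\Omega_f$. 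As the identity also acts boundedly $X_0\to X_0$, it is a bounded morphism of the interpolation couples $(X_0,X_1)\to(X_0,Y_1)$, so exactness of the complex interpolation functor yields $X_\beta=[X_0,X_1]_\beta\hookrightarrow[X_0,Y_1]_\beta$ for every $\beta\in(0,1)$.

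Next I would compute the target. Because complex interpolation distributes over finite Cartesian products,
\[
[X_0,Y_1]_\beta=\bigl[\rL^q(\Omega_p),\rH^{2,q}(\Omega_p)\bigr]_\beta\times\bigl[\rL^q_{\sigma}(\Omega_f),\rH^{2,q}(\Omega_f)\cap \rL^q_{\sigma}(\Omega_f)\bigr]_\beta .
\]
For the porous factor, the standard Bessel-potential interpolation on the smooth domain $\Omega_p$ (via an extension operator to $\R^3$, see e.g.\ \cite{Tri:78}) gives $[\rL^q(\Omega_p),\rH^{2,q}(\Omega_p)]_\beta=\rH^{2\beta,q}(\Omega_p)$. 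For the fluid factor I would not identify the space exactly but only bound it: from $\rH^{2,q}(\Omega_f)\cap \rL^q_{\sigma}(\Omega_f)\hookrightarrow \rH^{2,q}(\Omega_f)$ and $\rL^q_{\sigma}(\Omega_f)\hookrightarrow \rL^q(\Omega_f)$, monotonicity yields an embedding into $[\rL^q(\Omega_f),\rH^{2,q}(\Omega_f)]_\beta=\rH^{2\beta,q}(\Omega_f)$, while the inclusions $\rH^{2,q}\cap \rL^q_{\sigma}\hookrightarrow \rL^q_{\sigma}$ and $\rL^q_{\sigma}\hookrightarrow \rL^q_{\sigma}$ give $[\rL^q_{\sigma},\rH^{2,q}\cap \rL^q_{\sigma}]_\beta\hookrightarrow \rL^q_{\sigma}(\Omega_f)$. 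Intersecting these two embeddings produces $[\rL^q_{\sigma}(\Omega_f),\rH^{2,q}(\Omega_f)\cap \rL^q_{\sigma}(\Omega_f)]_\beta\hookrightarrow \rH^{2\beta,q}(\Omega_f)\cap \rL^q_{\sigma}(\Omega_f)$, and combining the two factors gives the lemma.

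I expect no serious obstacle. The only point needing mild care is the solenoidal constraint in the fluid component: rather than running a retraction--coretraction argument through the Helmholtz projection $\mathcal{P}$ (which would require its boundedness on the relevant fractional scale), I obtain the intersection by interpolating into the two endpoint spaces separately, as above. A further benefit of aiming only for an embedding into spaces \emph{without} boundary conditions is that no difficulty arises at the critical exponents $2\beta=1/q$ or $2\beta=1+1/q$, which would otherwise force a case distinction of the type appearing in \cref{lem:A_0}.
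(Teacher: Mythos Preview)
Your proposal is correct and follows essentially the same route as the paper: both arguments use that the identity map is bounded from the couple $(X_0,X_1)$ into the diagonal couple $\bigl(\rL^q(\Omega_p)\times\rL^q_{\sigma}(\Omega_f),\ \rH^{2,q}(\Omega_p)\times(\rH^{2,q}(\Omega_f)\cap\rL^q_{\sigma}(\Omega_f))\bigr)$ and then apply the complex interpolation functor. The paper's proof stops there, leaving the identification of the target implicit; your write-up supplies the extra step of splitting the product and handling the solenoidal constraint via two separate embeddings, which is a welcome clarification but not a different method.
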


\begin{proof}
 The embedding $\iota \colon X_1 \hookrightarrow \rH^{2,q}(\Omega_p) \times \bigl( \rH^{2,q}(\Omega_f) \cap \rL^q_{\sigma}(\Omega_f) \bigr)$ is bounded. Moreover,
 $\iota \colon X_0 \hookrightarrow \rL^{q}(\Omega_p) \times \rL^q_{\sigma}(\Omega_f)$ is the identity and hence a bounded operator. 
Now the claim follows by applying the complex interpolation functor to the embedding $\iota$. 
\end{proof}

We now give a proof of our first main result concerning local strong wellposedness and global strong wellposedness for small data for the system \eqref{eq:ns-darcyp}--\eqref{eq:ns-darcyint}.

\begin{proof}[Proof of \cref{thm:main bjs critical}.]
We verify Assumption (A) in \cref{sec:prelim}. 
Let us note first that condition  (MR) is proven in \cref{prop:mr}(b).  
To verify condition (H1) we set $\beta=\frac14(1 + 3/q)$ and use Lemmas \ref{lem:F} and \ref{lem:Xbeta}.  Condition (H2) means $2\beta \leq \mu+1-1/r$  and the optimal choice of $\mu=\mu_c$ is 
given by $\mu_c=2\beta -1+1/r$. Furthermore, the condition $\mu\leq 1$ requires $2/r + 3/q \leq 3$. 
Finally, we see from the proof of \cref{thm:local} that the condition (S) can be weaken to a mixed derivative theorem for Bessel potential spaces without boundary conditions. 
Such a mixed derivative theorem is proven in \cite[Proposition~3.2]{MS12}. Now the claim follows from \cref{thm:local}.
\end{proof}

Next, we prove the finite-in-time blow up criterion. 

\begin{proof}[Proof of {\cref{cor:bjs blow up}}.]
Claim (ii) follows the lines of the proof of \eqref{eq:serrin}, 
where we replace the abstract mixed derivative theorem from condition (S) 
by its analogue for Bessel potential spaces without boundary conditions from \cite{MS12} and use \cref{prop:mr} and \cref{lem:F}.
Claim (i) follows immediately from \cref{sec:prelim} and ~\eqref{eq:trace spaces}.
\end{proof}

	Finally, we show the local strong wellposedness and global strong wellposedness for small data for the Navier-Stokes-Darcy system with Beaver-Joseph interface conditions. 
\begin{proof}[Proof of \cref{cor:bj r=q=2}.]
	The proof follows the lines of the proof of \cref{thm:main bjs critical} with $r = q = 2$ replacing \cref{prop:mr} by \cref{thm:opBJ}.
\end{proof}

\section{Higher regularity}
\label{sec:regularity}

This section is dedicated to the proofs of \cref{thm:reg in bjs} and \cref{thm:reg bjs}. Both rely on Angenent's parameter trick, see \cite{Ang:90}. We need to assume that $\Gamma$ is analytic.   

We start with a variant of Lemma \ref{lem:F}. With $\mu_c=1/r+3/2q-1/2$, \cref{prop:mr}(b), the mixed derivative theorem  and Sobolev embeddings imply
\begin{equation}
\begin{aligned} 
			_0 \mathbb{E}_{1,\mu}(T')
			&\hookrightarrow 
			\rL^r_{\mu}(0,T',{X}_1) \cap \,_0\rH^{1,r}_{\mu}(0,T',{X}_0) \\
			&\hookrightarrow
\rL^r_{\mu}(0,T',\rH^{2,q}(\Omega_p) \times (\rH^{2,q}(\Omega_f) \cap \rL^q_{\sigma}(\Omega_f)) ) \cap \rH^{1,r}_{\mu}(0,T';\rL^q(\Omega_p) \times \rL^q_{\sigma}(\Omega_f)) \\ 
&\hookrightarrow \rH_{\mu}^{1-s/2,r}(0,T';\rH^{s,q}(\Omega_p) \times (\rH^{s,q}(\Omega_f) \cap \rL^q_{\sigma}(\Omega_f))) \\
			&\hookrightarrow
			\rL_{\tau}^{2r}(0,T';\rH^{s,q}(\Omega_p) \times (\rH^{s,q}(\Omega_f) \cap \rL^q_{\sigma}(\Omega_f)))
\end{aligned} 
\label{eq:embedding}
\end{equation}
for $s = \frac{1}{2} \big( 1+ \frac{3}{q} \big)$ and $1-\mu = 2(1-\tau)$.
Combining Lemma \ref{lem:F} with the above embedding \eqref{eq:embedding} , we observe  that the bilinearity $F\colon \E_{1,\mu} \times \E_{1,\mu} \to \E_{0,\mu}$ is bounded. 

\begin{lemma}\label{cor:F bounded}
Let $q \in (1,3)$ and $\mu \in [\mu_c,1]$ with $\mu_c := \frac{1}{r}+\frac{3}{2q}-\frac{1}{2}$. Then the bilinearity ${F} \colon \E_{1,\mu} \times \E_{1,\mu} \to \E_{0,\mu}$ is bounded, i.e. 
there exists a constant $C > 0$ such that 
\begin{equation*}
		\| {F}({v}_1,{v}_2) \|_{\E_{0,\mu}}
		\leq C \cdot \| {v}_1 \|_{\E_{1,\mu}} \cdot \| {v}_2 \|_{\E_{1,\mu}} 
	\end{equation*}
	for all $v_1,v_2 \in \E_{1,\mu}$. 
\end{lemma}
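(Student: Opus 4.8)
The plan is to combine the pointwise-in-time estimate of \cref{lem:F} with a H\"older argument in the time variable, where the time weights are arranged to match precisely by the embedding \eqref{eq:embedding}. First I would reduce to the fluid component. Since $F(v_1,v_2) = (0,-\mathcal{P}(u_1 \cdot \nabla u_2))^\top$ and the Helmholtz projection $\mathcal{P}$ is bounded on $\rL^q_{\sigma}(\Omega_f)$, for almost every $t$ one has
\begin{equation*}
	\| F(v_1,v_2)(t) \|_{X_0} \leq C \, \| u_1(t) \cdot \nabla u_2(t) \|_{\rL^q(\Omega_f)} .
\end{equation*}
Applying the bilinear estimate established midway through the proof of \cref{lem:F} (before the bilinear structure is exploited) with $s = \tfrac{1}{2}(1 + \tfrac{3}{q})$ yields the pointwise bound
\begin{equation*}
	\| F(v_1,v_2)(t) \|_{X_0} \leq C \, \| u_1(t) \|_{\rH^{s,q}(\Omega_f)} \, \| u_2(t) \|_{\rH^{s,q}(\Omega_f)} .
\end{equation*}

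Next I would integrate in time against the weight $t^{r(1-\mu)}$ that defines the $\E_{0,\mu}$-norm. Raising the pointwise bound to the $r$-th power, splitting the weight symmetrically as $t^{r(1-\mu)} = t^{r(1-\mu)/2}\, t^{r(1-\mu)/2}$, and applying H\"older's inequality in $t$ with exponents $(2,2)$ gives
\begin{equation*}
	\| F(v_1,v_2) \|_{\E_{0,\mu}}^r
	\leq C \prod_{i=1}^{2} \Big( \int_0^T t^{r(1-\mu)} \, \| u_i(t) \|_{\rH^{s,q}(\Omega_f)}^{2r} \, \mathrm{d}t \Big)^{1/2} .
\end{equation*}
The crucial bookkeeping is the relation $1-\mu = 2(1-\tau)$ from \eqref{eq:embedding}, that is $\tau = \tfrac{1+\mu}{2}$, which makes $t^{r(1-\mu)} = t^{2r(1-\tau)}$; hence each factor on the right equals exactly $\| u_i \|_{\rL^{2r}_{\tau}(0,T;\rH^{s,q}(\Omega_f))}^{r}$.

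Finally I would invoke the embedding \eqref{eq:embedding}, namely $\E_{1,\mu} \hookrightarrow \rL^{2r}_{\tau}(0,T;\rH^{s,q}(\Omega_f))$, to replace $\| u_i \|_{\rL^{2r}_{\tau}(\rH^{s,q})}$ by $C \, \| v_i \|_{\E_{1,\mu}}$, which delivers the claimed estimate. The step I expect to require the most care is this last one: the chain \eqref{eq:embedding} is stated for the space ${_0}\E_{1,\mu}$ with vanishing initial trace, whereas the lemma concerns the full space $\E_{1,\mu}$. To close this gap I would either check that the underlying mixed derivative theorem persists for nonzero initial data, or split $v_i = w_i + e_i$, where $e_i$ is a fixed extension of the trace $v_i(0) \in X_{\gamma,\mu}$ into $\E_{1,\mu}$ and $w_i \in {_0}\E_{1,\mu}$, estimating the trace contribution separately via \eqref{eq:embedding mr}. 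Apart from this, the argument is a routine weighted H\"older estimate once the exponents are matched.
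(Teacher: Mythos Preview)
Your argument is correct and follows the same route as the paper: combine the pointwise bilinear estimate of \cref{lem:F} with H\"older in time and the embedding \eqref{eq:embedding}, using the weight relation $1-\mu = 2(1-\tau)$ to match the $\rL^{2r}_{\tau}$-norm. Your caveat about \eqref{eq:embedding} being stated for ${_0}\E_{1,\mu}$ rather than $\E_{1,\mu}$ is a fair point that the paper glosses over; your proposed fix via a trace extension (or appealing to the mixed derivative theorem with nonzero initial trace) is standard and works.
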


\textcolor{black}{We are now} in the position to prove \cref{thm:reg in bjs}. 
\textcolor{black}{We follow the strategy from \cite[Section 9.4]{PS16}.}

\begin{proof}[Proof of \cref{thm:reg in bjs}.]
	We consider a solution $\bar{{v}} := (\bar{u},\bar{p})$ of \eqref{eq:ns-darcyp}-\eqref{eq:ns-darcyint} and the linear
	problem
	\begin{equation}
		\left\{
		\begin{aligned}
			\partial_t p - k\Delta p & = f_1, \\
			\partial_t u - A_m u + \mathcal{P} (\bar{u} \cdot \nabla u + u \cdot \nabla \bar{u}) & = f_2, \\
		\end{aligned}
		\label{eq:linear problem}
		\right.
	\end{equation}
	subject to  the interface conditions \eqref{eq:ns-darcyint}, the boundary conditions $p|_{\Gamma_1} = 0$, $u|_{\Gamma_2} = 0$, and the initial data $p(0) = 0$, $u(0) = 0$. 
	We denote the corresponding operator by $\mb{A}_{\overline{v}}$ and the perturbation by 
	\begin{equation*}
	\mb{B} {u}	:= \binom{0}{\mathcal{P} (\bar{u} \cdot \nabla u + u \cdot \nabla \bar{u})} .
\end{equation*}
Furthermore, we define
	\begin{equation*}
		\mb{A}_{\eta} := \mb{A} - \eta \mb{B}
	\end{equation*}
	for $\eta \in [0,1]$. Note that $\mb{A}_0 = \mb{A}$ and $\mb{A}_1 = \mb{A}_{\bar{{v}}}$.
	Now, we consider 
	\begin{equation}
			v' - \mb{A}_{\eta} {v} = {f}, \enspace {v}(0) = 0 .  
		\label{eq:DH}
	\end{equation}
	Using the maximal regularity of $\mb{A}$, see \cref{prop:mr}(b), and \cref{cor:F bounded}, we obtain
	\begin{equation*}
		\| {v} \|_{\E_{1,\mu}}
		\leq C_{\mathrm{MR}}  \cdot (\| {f} \|_{\E_{0,\mu}} + \eta \| \mb{B}{{v}} \|_{\E_{0,\mu}})
		\leq C_{\mathrm{MR}}  \cdot \| {f} \|_{\E_{0,\mu}} + C_{\mathrm{MR}} \cdot C \cdot \| \bar{{v}} \|_{\E_{1,\mu}} \cdot 
		\| {{v}} \|_{\E_{1,\mu}} .
	\end{equation*}
	Now we choose $T_1 < T'$ sufficient small such that 
	\begin{equation}
		C_{\mathrm{MR}} \cdot C \cdot \| \bar{{v}} \|_{\E_{1,\mu}}
		\leq \frac{1}{2} \label{eq:vbar}
	\end{equation}
	holds. We point out that the constants $C_{\mathrm{MR}}, C > 0$ are independent of $T'$ and $\eta$. An absorbing argument yields
	\begin{equation*}
		\| {v} \|_{\E_{1,\mu}}
		\leq C \cdot \| {{f}} \|_{\E_{0,\mu}} .
	\end{equation*}
	For $\eta = 0$ the operator $\mb{A}_{0} = \mb{A}$ is a isomorphism by \eqref{prop:mr}(b).
	Now the method of continuity yields that $\mb{A}_{\bar{{v}}} = \mb{A}_1$ is a isomorphism for $T_1 > 0$ such that \eqref{eq:vbar} holds. 
	Since $\| \bar{{v}} \|_{\E_{1,\mu}} < \infty$, we may choose a equidistant distribution $0 = T_0 < \dots < T_n = T'$ such that 
	\eqref{eq:vbar} holds on $(T_i, T_{i+1})$ for $i = 0,\dots,n-1$. Repeating the argument with the starting time $T_i$ and initial data ${v}_i = {v}_{i-1}(T_i)$ iteratively, yields the existence of a unique solution ${v} \in \E_{1,\mu}$ of \eqref{eq:DH} on $(0,T')$.  
	Hence, $\mb{A}_{\bar{{v}}}$ is an isomorphism, i.e. \eqref{eq:linear problem} admits a unique solution. Now \cref{thm:parameter trick} implies the claim.
	
	\medskip 
	
	Note that claim (b) is a local property in the interior. After using a suitable smooth cut-off the interface conditions plays no role anymore. 
	Now the assertion follows as in Section IV.9.4 of \cite{PS16}.  
\end{proof}

\begin{proof}[Proof of \cref{thm:reg bjs}.]
	Again we only proof the real analytic case, the smooth case follows by obvious modifications. 
	
The proof is more involved than the proof of \cref{thm:reg in bjs}. We need to apply Angenent's parameter trick not only in the time variable but also in the spatial variable.  
In contrast to the problems under consideration in \cite[Section 9.4.2]{PS16} in our case  we have different evolution equations on $\Omega_p$ and $\Omega_f$. 
Therefore, the arguments in  \cite[Section 9.4.2]{PS16} need to be modified.   
	
We fix a point $(t_0,x_0) \in (0,\varepsilon) \times \Gamma$ and choose a parametrization $\varphi \colon B_{\R^2}(0,2R) \to \R^3$ for $\Gamma$ near $x_0$.
Since $\Gamma$ is real analytic by assumption, we may choose $\varphi$ analytic. 
	We extend this map to $\Omega_- := \Omega_p$ and $\Omega_+ := \Omega_f$, respectively, by
	\begin{align*}
		\phi^-(y,z) &:= \varphi(y) + z n_\Gamma, \qquad  \text{ for } (y,z) \in B_{\R^2}(0,3R) \times (-3a,0], \\ 
		\phi^+(y,z) &:= \varphi(y) + z n_\Gamma, \qquad \text{ for } (y,z) \in B_{\R^2}(0,3R) \times [0,3a),
	\end{align*}
	for $R > 0$ and $a>0$ sufficient small. Then $\phi^{\pm}$ are diffeomorphisms and real analytic. We denote by 
	$P_\Gamma$ the tangential projection and obtain $P_\Gamma \phi^{\pm}(y,z) = \varphi(y)$. We define the truncated shift by
	\begin{equation*}
		\tau_{\xi}(y,z) = (y+\xi \chi_0(y)\zeta_0(z),z)
	\end{equation*}
	for $(y,z) \in B_{\R^2}(0,3R) \times (-3a,3a)$,
	where $\chi_0$ is a smooth cutoff on $\R^2$ with $\chi_0 \equiv 1$ if $|y| \leq R$ and $\chi \equiv 0$ if $|y| > 2R$ and $\zeta_0$ is a smooth cuoff on $\R$ with $\zeta_0 \equiv 0$ if $|z| \leq 2 a$ and $\zeta_0 \equiv 0$ if $|z| > \frac{5}{2}a$. Note that $\xi$ acts only tangentially. Further, we define
	\begin{equation*}
		\tau_{\lambda,\xi}^{\pm}(t,x)
		:= 
		\begin{cases}
			(t+\lambda \chi_{t_0}(t) ,\phi^{\pm}(\tau_{t\xi}((\phi^{\pm})^{-1}(x) ))) , &\text{ if } (t,x) \in (0,\varepsilon) \times U_{\pm}, \\
			(t + \lambda \chi_{t_0}(t),x) &\text{ if } (t,x) \in (0,\varepsilon) \times \Omega_{\pm} \setminus U_{\pm},
		\end{cases}
	\end{equation*}
	on the tubular neighbourhood $U_{\pm} := \phi^{\pm}(B_{\R^2}(0,3R) \times I_{\pm})$ of $x_0 \in \Gamma$ with $I_{-} = (3a,0]$ and $I_+ = [0,3a)$. 
	Furthermore, we set
	\begin{equation*}
		\tau^{\pm} \colon (-r,r) \times B_{\R^3}(0,r) \to \mathrm{Diff}^\infty((0,T)\times \Omega_\pm), (\lambda,\xi) \mapsto \tau_{\lambda,\xi}^{\pm} .
	\end{equation*}
	Finally, we define the associated push forward operator by
	\begin{equation*}
		T_{\lambda,\xi} {v}
		:= 
		\binom{p\circ\tau_{\lambda,\xi}^-}{u \circ \tau_{\lambda,\xi}^+} 
	\end{equation*}
	and remark that $T_{\lambda,\xi} \colon \E_{j,\mu}(T) \to \E_{j,\mu}(T)$ for $j = 0,1$ are isomorphisms. 
	
	\medskip 
	
	Let $\tilde{{H}}$ be such that $\tilde{{H}}(p,u)^T = (f_p,f_f,p_0,u_0)$ is equivalent to the system \eqref{eq:ns-darcyp}-\eqref{eq:ns-darcyint}. Analogously to the proof of \cref{thm:reg in bjs} we define
	\begin{equation*}
		{H}\bigl(\lambda,\xi,\tbinom{p}{u}\bigr) := T_{\lambda,\xi} \tilde{{H}}\bigl( T_{\lambda,\xi}^{-1}\tbinom{p}{u}\bigr) .
	\end{equation*} 
	As in the proof of \cref{thm:reg in bjs} we obtain that $D_{{v}}{H}(0,\bar{{v}})$ is an isomorphism and that ${H} \in \rC^\omega$. 
	Further, we conclude from the implicit function theorem and 
	
	\begin{equation*}
		\binom{p|_{\Gamma}}{u|_{\Gamma}}
		\in \rH_{\mu}^{\theta,r}(0,T;\rH^{2-2\theta-\frac{1}{q},q}(\Gamma)^2)
	\end{equation*}
	together with 
	the embedding
	\begin{equation*}
		\rH_{\mu}^{\theta,r}(0,T;\rH^{2-2\theta-\frac{1}{q},q}(\Gamma)) \hookrightarrow \rC ((0,T) \times \Gamma)
	\end{equation*}
	for some $\theta \in (0,1)$, the claim. The above embedding is possible since  $\frac{1}{r}+\frac{3}{2q} < 1$. \qedhere 
\end{proof}	

\medskip 

{\bf Acknowledgements}
Tim Binz would like to thank DFG for support through project 538212014. Matthias Hieber acknowledges the support by DFG project FOR 5528, while Arnab Roy would like to express his gratitude to the Alexander von Humboldt-Stiftung / Foundation, Grant RYC2022-036183-I for their support.
	
	\bibliographystyle{plain}
	\bibliography{ref-porofluid}
	
\end{document}